\newtheorem{theorem}{Theorem}
\newtheorem{corollary}[theorem]{Corollary}
\newtheorem{lemma}[theorem]{Lemma}
\newtheorem{observation}[theorem]{Observation}
\newtheorem{definition}[theorem]{Definition}
\newcommand{\QED}{\end{proof}}
\newcommand{\possible}{\mathop{\raisebox{-1pt}{$\Diamond$}}}
\newcommand{\necessary}{\mathop{\raisebox{-1pt}{$\Box$}}}
\newcommand{\ZFC}{{\rm ZFC}}
\newcommand{\CH}{{\rm CH}}
\newcommand{\GCH}{{\rm GCH}}
\newcommand{\MA}{{\rm MA}}
\newcommand{\smalllt}{\mathrel{\mathchoice{\raise2pt\hbox{$\scriptstyle<$}}{\raise1pt\hbox{$\scriptstyle<$}}{\raise0pt\hbox{$\scriptscriptstyle<$}}{\scriptscriptstyle<}}}
\newcommand{\ltkappa}{{{\smalllt}\kappa}}
\newcommand{\ltomega}{{{\smalllt}\omega}}
\newcommand{\ltlambda}{{{\smalllt}\lambda}}
\newcommand{\lttheta}{{{\smalllt}\theta}}
\newcommand{\forces}{\Vdash}
\newcommand{\Cof}{\mathop{\rm Cof}}
\newcommand{\Add}{\mathop{\rm Add}}
\newcommand{\Coll}{\mathop{\rm Coll}}
\newcommand{\axiomf}[1]{{\rm #1}}
\newcommand{\theoryf}[1]{\hbox{$\mathsf{#1}$}}
\newcommand{\of}{\subseteq}
\newcommand{\ofneq}{\subsetneq}
\newcommand{\B}{{\mathbb B}}
\newcommand{\C}{{\mathbb C}}
\newcommand{\bbS}{{\mathbb S}}
\renewcommand{\P}{{\mathbb P}}
\newcommand{\Q}{{\mathbb Q}}
\newcommand{\R}{{\mathbb R}}
\newcommand{\T}{{\mathbb T}}
\newcommand{\Rdot}{{\dot\R}}
\newcommand{\plus}{{+}}
\newcommand{\set}[1]{\{\,{#1}\,\}}
\newcommand{\singleton}[1]{\left\{{#1}\right\}}
\newcommand{\st}{\mid}
\newcommand{\iso}{\cong}
\newcommand{\cross}{\times}
\newcommand{\satisfies}{\models}
\newcommand{\ORD}{\mathop{{\rm ORD}}}
\newcommand{\elesub}{\prec}
\newcommand{\union}{\cup}
\newcommand{\Union}{\bigcup}
\newcommand{\intersect}{\cap}
\newcommand{\df}{\it} 
\def\<#1>{\langle#1\rangle}
\newcommand{\onex}{\mathbb{1}}
\newcommand{\zerox}{\mathbb{0}}
\newcommand{\LL}{\mathrm{L}}
\renewcommand{\implies}{\rightarrow}
\renewcommand{\iff}{\leftrightarrow}
\newcommand{\Lmodal}{\mathcal{L}_{\tiny\necessary}}
\newcommand{\Lst}{\mathcal{L}_{\tiny\in}}
\begin{document}

\title[Connections between a forcing class and its logic]{Structural connections between a forcing class and its modal logic}

\author{Joel David Hamkins}

\address[J.D.H.]{Mathematics Program, The Graduate Center of The City University of New York, 365 Fifth Avenue, New York, NY 10016, U.S.A. \& Department of Mathematics, The College of Staten Island of CUNY, Staten Island, NY 10314, U.S.A.}

\email{jhamkins@gc.cuny.edu, http://jdh.hamkins.org}

\thanks{The research of the first author has been supported in part by NSF grant DMS-0800762, PSC-CUNY grant 64732-00-42 and Simons Foundation grant 209252.
The third author would like to thank the CUNY Graduate Center in New York for their hospitality during his sabbatical in the fall of 2009,
and he acknowledges the generous support of the \textsl{Vlaams Academisch Centrum} at the Royal Flemish Academy in Brussels in the form of
a  fellowship in 2010/11. In addition, the first and third authors both acknowledge the generous support provided to them as visiting
fellows in Spring 2012 at the Isaac Newton Institute for Mathematical Sciences in Cambridge, U.K., where this article was completed. The authors would like to thank Nick Bezhanishvili for input on \S\,\ref{Section.ModalLogicBackground}.}

\author{George Leibman}

\address[G.L.]{Department of Mathematics and Computer Science, Bronx Community College, The City University of New York, 2155 University Avenue, Bronx, NY 10453}

\email{gleibman@acedsl.com}

\author{Benedikt L\"owe}
\address[B.L.]{Institute for Logic, Language and Computation, Universiteit van Amsterdam,
Postbus 94242, 1090 GE Amsterdam, The Netherlands \& Fachbereich Mathematik, Universit\"at Hamburg, Bundesstra{\ss}e 55, 20146 Hamburg,
Germany \& Isaac Newton Institute for Mathematical Sciences, 20, Clarkson Road, Cambridge CB3 0EH, England \&
Corpus Christi College, University of Cambridge, Trumpington Street,
Cambridge CB2 1RH, England}

\email{bloewe@science.uva.nl}


\begin{abstract}
Every definable forcing class $\Gamma$ gives rise to a corresponding forcing modality, for which $\necessary_\Gamma\varphi$ means that
$\varphi$ is true in all $\Gamma$ extensions, and the valid principles of $\Gamma$ forcing are the modal assertions that are valid for this
forcing interpretation. For example, \cite{HamkinsLoewe2008:TheModalLogicOfForcing} shows that if\/ \ZFC\ is consistent, then the
\ZFC-provably valid principles of the class of all forcing are precisely the assertions of the modal theory \theoryf{S4.2}. In this
article, we prove similarly that the provably valid principles of collapse forcing, Cohen forcing and other classes are in each case
exactly \theoryf{S4.3}; the provably valid principles of c.c.c.~forcing, proper forcing, and others are each contained within
\theoryf{S4.3} and do not contain \theoryf{S4.2}; the provably valid principles of countably closed forcing, \CH-preserving forcing and
others are each exactly \theoryf{S4.2}; and the provably valid principles of $\omega_1$-preserving forcing are contained within
\theoryf{S4.tBA}. All these results arise from general structural connections we have identified between a forcing class and the modal
logic of forcing to which it gives rise.
\end{abstract}

\maketitle


\section{Introduction}

In \cite{HamkinsLoewe2008:TheModalLogicOfForcing}, we considered the \emph{modal logic of forcing}, which arises when one considers a model
of set theory in the context of all its forcing extensions, interpreting $\necessary$ as ``in all forcing extensions'' and $\possible$ as
``in some forcing extension''. This modal language allows one easily to express sweeping general principles concerning forcing absoluteness
and the effect of forcing on truth in set theory, such as the assertion $\possible\necessary\varphi\implies\necessary\possible\varphi$,
expressing that every possibly necessary statement is necessarily possible, which the reader may verify is valid for the forcing
interpretation, or the assertion $\possible\necessary\varphi\implies\varphi$, that every possibly necessary statement is true, which is an
equivalent formulation of the maximality principle \cite{Hamkins2003:MaximalityPrinciple,StaviVaananen:ReflectionPrinciples}, a forcing
axiom independent of but equiconsistent with \ZFC. It was known from \cite{Hamkins2003:MaximalityPrinciple} that the valid principles of
forcing include all the assertions of the modal theory \theoryf{S4.2}, and the main theorem of
\cite{HamkinsLoewe2008:TheModalLogicOfForcing} established that if $\ZFC$ is consistent, then in fact the \ZFC-provably valid principles of
forcing are exactly the assertions of \theoryf{S4.2}.

In this article, we consider more generally the modal logic of various particular kinds of forcing, such as c.c.c.~forcing or $\omega_1$-preserving forcing, by relativizing the modal operators to such a class. In
\cite{HamkinsLoewe2008:TheModalLogicOfForcing}, a number of connections between various modal axioms and corresponding operations
on the class of forcing notions had surfaced: for instance, the validity of the axiom $\axiomf{.2}$ mentioned above is linked to the operation of finite products of forcing notions \cite[p.\,1794]{HamkinsLoewe2008:TheModalLogicOfForcing}. In this
article, we shall uncover further such structural connections between a class of forcing notions and the modal logic of forcing to which it
gives rise, and use these connections to settle the exact modal logic of several natural forcing classes, while providing new bounds on
several others. We have aimed to provide general tools, including the control statements of \S\ref{Section.ControlStatements} and their
connection to the forcing validities, which seem promising to enable set theorists to undertake an investigation of the modal logic of
additional forcing classes by means of a forcing-only analysis (requiring no substantial modal logic), by discovering which kinds of
control statements their forcing class supports.

For example, using these methods we prove in this article that
the provably valid principles of collapse forcing, Cohen forcing and other
classes are in each case exactly $\theoryf{S4.3}$ (theorems \ref{Theorem.CollValid=S4.3} and \ref{Theorem.AddValid=S4.3}); the provably valid principles of c.c.c.~forcing, proper
forcing, and others are each contained within $\theoryf{S4.3}$ and do not contain $\theoryf{S4.2}$ (corollary \ref{Corollary.VariousClassesInS4.3}); the provably valid principles of countably closed forcing, $\CH$-preserving forcing and others are each exactly $\theoryf{S4.2}$ (theorems \ref{Theorem.CountablyClosed=S4.2} and \ref{thm:chpreserving}); and the provably valid principles of $\omega_1$-preserving forcing are contained within
$\theoryf{S4.tBA}$ (theorem \ref{Theorem.Omega1PrservingS4tBA}). 

The forcing interpretation of modal logic was introduced in \cite{Hamkins2003:MaximalityPrinciple}, with a deeper more explicit
investigation in \cite{HamkinsLoewe2008:TheModalLogicOfForcing}. Various other aspects of the modal logic of forcing are considered in
\cite{Leibman2004:Dissertation,HamkinsWoodin2005:NMPccc,Fuchs2008:ClosedMaximalityPrinciples,Fuchs2009:CombinedMaximalityPrinciplesUpToLargeCardinals,Leibman2010:TheConsistencyStrengthOfMPccc(R),Rittberg2010:TheModalLogicOfForcing,FriedmanFuchinoSakai:OnTheSetGenericMultiverse,EsakiaLoewe}. We consider this article a natural
successor to \cite{HamkinsLoewe2008:TheModalLogicOfForcing}.

\section{Modal logic background}
\label{Section.ModalLogicBackground}

The modal logic of forcing involves an interplay between two formal languages, which we sharply distinguish, namely (i) the language of
propositional modal logic $\Lmodal$, which has propositional variables, logical connectives and the modal operators $\necessary$ and $\possible$ (where $\possible\varphi$ is defined as $\neg\necessary\neg\varphi$; and (ii) the usual
first-order language of set theory $\Lst$, which has variables, quantifiers, logical connectives and the relations $=$ and $\in$. One may regard a modal
assertion $\varphi(p_1,\ldots,p_n)$ as a template for the scheme of set-theoretic assertions $\varphi(\psi_1,\ldots,\psi_n)$ which arise by
the substitution of set-theoretic assertions $\psi_i$ for the propositional variables $p_i$ and the forcing interpretation of the modal
operators. The nature of this translation will be further investigated in \S\ref{Section.ForcingWithGamma}. Meanwhile, in this section, let
us introduce the modal theories that will arise in that analysis. Using the following axioms (with their established nomenclature)
$$\begin{array}{rl}
 \axiomf{K} &\ \necessary(\varphi\implies \psi)\implies(\necessary\varphi\implies\necessary\psi)\hskip.7in\\
 \axiomf{T} &\ \necessary\varphi\implies\varphi\\
 \axiomf{4} &\ \necessary\varphi\implies \necessary\necessary\varphi\\
 \axiomf{.2} &\ \possible\necessary\varphi\implies\necessary\possible\varphi\\
 \axiomf{.3} &\ (\possible\varphi\wedge\possible\psi)\implies\possible[(\varphi\wedge\possible\psi)\vee(\psi\wedge\possible\varphi)]\\
 \axiomf{5} &\ \possible\necessary\varphi\implies\varphi,\\
\end{array}$$
we define the desired modal theories, where in each case we close the axioms under modus ponens and necessitation:
$$\begin{array}{rcl}
 \theoryf{S4} &=& \axiomf{K}+\axiomf{T}+\axiomf{4}\hskip2.3in\\
 \theoryf{S4.2} &=& \axiomf{K}+\axiomf{T}+\axiomf{4}+\axiomf{.2}\\
 \theoryf{S4.3} &=& \axiomf{K}+\axiomf{T}+\axiomf{4}+\axiomf{.3}\\
 \theoryf{S5} &=& \axiomf{K}+\axiomf{T}+\axiomf{4}+\axiomf{5}.\\
\end{array}$$
It is not difficult to see that $\theoryf{S4}\vdash \axiomf{5}\to\axiomf{.3}$ and $\theoryf{S4}\vdash\axiomf{.3}\to\axiomf{.2}$, and
consequently $\theoryf{S4}\of\theoryf{S4.2}\of\theoryf{S4.3}\of\theoryf{S5}$.

The Kripke model concept provides a robust semantics for modal logic: a {\df Kripke model} is a collection $M$ of possible worlds, each
providing truth values for the propositional variables, together with a relation on the worlds
called the accessibility relation. The {\df frame} of such a model is this accessibility relation, disregarding the truth assignments of the worlds. Modal truth is defined by induction in the natural way, so that $\necessary\varphi$ is true at a world in $M$, if $\varphi$ is true in all accessible worlds, and similarly $\possible\varphi$ is true at a world if $\varphi$ is true at some accessible world. We write $\llbracket v\rrbracket_M$ for the set of $\Lmodal$-formulas true at $v$ in $M$ and note that this is a set closed under modus ponens.

If $F$ is a frame, a modal assertion is {\df valid for $F$} if it is true at all worlds of all Kripke models having $F$ as a frame.  If $\mathcal{C}$ is a class of frames, a modal theory is {\df sound with respect to $\mathcal{C}$} if every assertion in the theory is valid for every frame in $\mathcal{C}$.  A modal theory is {\df complete with respect to $\mathcal{C}$} if every assertion valid for every frame in $\mathcal{C}$ is in the theory.  Finally, a modal theory is {\df characterized by $\mathcal{C}$} (equivalently, {\df $\mathcal{C}$ characterizes} the modal theory) if it is both sound and complete with respect to $\mathcal{C}$ \cite[p.\ 40]{HughesCresswell1996:ANewIntroductionToModalLogic}.


It turns out that all the modal theories we mentioned are characterized by certain classes of finite Kripke frames. For the rest of this section, we mention the completeness results of which we shall subsequently make use. Numerous further completeness results can be found in \cite{ChagrovZakharyaschev1997:ModalLogic}.

A {\df pre-order} is a reflexive transitive relation. Every pre-order $\leq$ admits a quotient by the equivalence $x\equiv y\iff x\leq
y\leq x$, and this quotient will be a partial order. Thus, a pre-order is obtained from a partial order by replacing each node with a
cluster of equivalent nodes. A {\df linear pre-order} is a pre-order in which any two nodes are comparable.

\begin{theorem}\label{Theorem.PreLinearOrdersCompleteForS4.3}
The modal logic \theoryf{S4.3} is characterized by the class of finite linear pre-order frames.
That is, a modal assertion is derivable in \theoryf{S4.3} if
and only if it holds in all Kripke models having a finite linear pre-ordered frame.
\end{theorem}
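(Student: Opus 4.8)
The plan is to prove the two directions---soundness and completeness---separately, with the bulk of the work in completeness. For soundness, I would verify that each axiom of \theoryf{S4.3} is valid on every finite linear pre-order frame and that modus ponens and necessitation preserve validity across a fixed frame. Axioms \axiomf{T} and \axiomf{4} are immediate from reflexivity and transitivity, which hold in any pre-order, and \axiomf{K} holds on all frames. The only axiom requiring genuine argument is \axiomf{.3}: if $\possible\varphi$ and $\possible\psi$ hold at a world $w$, witnessed respectively by accessible worlds $u$ and $v$, then linearity makes $u$ and $v$ comparable, say $v\leq u$; now $\possible\varphi$ holds at $v$ (witnessed by $u$) while $\psi$ holds at $v$, so $(\psi\wedge\possible\varphi)$ holds at the accessible world $v$, yielding $\possible[(\varphi\wedge\possible\psi)\vee(\psi\wedge\possible\varphi)]$ at $w$, as required; the case $u\leq v$ is symmetric.

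For completeness, I would show that any $\varphi\notin\theoryf{S4.3}$ is refuted in some finite linear pre-order model. The first step is the standard canonical model construction for \theoryf{S4.3}, whose worlds are the maximal \theoryf{S4.3}-consistent sets. One checks that the canonical accessibility relation is reflexive (from \axiomf{T}), transitive (from \axiomf{4}), and weakly connected (from \axiomf{.3}), meaning that $x\leq y$ and $x\leq z$ imply $y\leq z$ or $z\leq y$. Since $\varphi$ is not a theorem, it fails at some world of this canonical model.

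The canonical model is infinite, so the second step is to pass to a finite model by filtration through a finite set $\Sigma$ of formulas containing $\varphi$ and closed under subformulas. The filtrated model has at most $2^{|\Sigma|}$ worlds, the filtration lemma guarantees that $\varphi$ is still refuted there, and one must verify that the chosen filtrated relation remains reflexive, transitive, and weakly connected. I expect this verification---ensuring in particular that weak connectedness, the frame correlate of \axiomf{.3}, survives filtration, which constrains the choice of filtrated relation---to be the main obstacle. Finally, I would replace the finite weakly connected frame by the subframe generated by the world refuting $\varphi$; by the generated-submodel lemma the refutation is preserved, and in a point-generated reflexive transitive weakly connected frame any two worlds are comparable, so this subframe is precisely a finite linear pre-order. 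This completes the reduction and hence the completeness half of the characterization.
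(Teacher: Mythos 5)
The paper itself establishes this theorem only by citation (to Chagrov--Zakharyaschev and to Blackburn--de Rijke--Venema), so your proposal must be judged against the standard textbook argument, whose skeleton you have correctly reproduced: your soundness verification is fine, and the canonical-model step is also fine, since the Geach form of $\axiomf{.3}$ used in the paper is interderivable over \theoryf{S4} with the Lemmon form $\necessary(\necessary p\implies q)\vee\necessary(\necessary q\implies p)$, from which weak connectedness of the canonical relation follows by the usual argument. The genuine gap is in the order of your last two steps: you filtrate the full canonical model and defer the verification that weak connectedness survives, then generate. That verification is not merely ``the main obstacle''---it is impossible in general, for \emph{every} admissible filtration, not just the Lemmon one. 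Take $\Sigma=\{\necessary p,p,\necessary q,q\}$ and maximal \theoryf{S4.3}-consistent sets $y\fo\{\necessary p,\neg q,\neg\necessary q\}$, $z\fo\{\necessary q,\neg p,\neg\necessary p\}$, and $x\fo\{\neg p,\neg q,\neg\necessary p,\neg\necessary q\}$. The filtration clause ``$[u]\,R^f\,[v]$ implies $\alpha\in v$ whenever $\necessary\alpha\in u\intersect\Sigma$'' forbids both $[y]\,R^f\,[z]$ (it would require $p\in z$) and $[z]\,R^f\,[y]$ (it would require $q\in y$); while the other clause, that $R^f$ must contain the projection of the canonical relation, \emph{forces} $[x]\,R^f\,[y]$ and $[x]\,R^f\,[z]$: the theories of the two worlds of a reflexive two-point chain $w_0\leq w_1$ with $w_0\satisfies\neg p\wedge\neg q$ and $w_1\satisfies p\wedge\neg q$ give canonically related representatives of $[x]$ and $[y]$, and symmetrically with $w_1\satisfies q\wedge\neg p$ for $[x]$ and $[z]$. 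So every filtration of the full canonical model violates weak connectedness at $[x]$, and your plan as stated breaks down at exactly the point you flagged.

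The repair is to reverse the two steps, which is how the standard proof proceeds. First pass to the submodel of the canonical model generated by the world refuting $\varphi$: since the canonical relation is reflexive and transitive, every world there is accessible from the root in one step, so weak connectedness upgrades to \emph{totality}---any two worlds of the generated submodel are comparable. Then filtrate with a transitive (Lemmon-style) filtration. The crucial point is that totality, unlike weak connectedness, automatically survives any filtration: given two classes, their representatives are comparable in the generated submodel, and every filtration relation contains the projection of the original relation, so the classes are comparable. The resulting finite model is reflexive, transitive and total, i.e., already based on a finite linear pre-order, the filtration lemma preserves the refutation of $\varphi$, and no post-filtration generation step is needed.
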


\begin{proof} Cf.\ \cite[corollary\,5.18 \& theorem\,5.33]{ChagrovZakharyaschev1997:ModalLogic} or \cite[exercise\,4.33, theorem\,4.96, \& lemma\,6.40]{BlackburnDeRijkeVenema2001:ModalLogic}.
\end{proof}

Every finite Boolean algebra partial order is isomorphic to the set of subsets of a finite set, ordered by inclusion. Of course, this
algebra also has the structure of meets (intersection), joins (unions) and negation (complements), with a smallest element $\zerox$ and a
largest element $\onex$. A {\df pre-Boolean algebra} is a partial pre-order $\langle\B,\leq\rangle$, such that the quotient by the relation $x\equiv
y\iff x\leq y\leq x$ is a Boolean algebra. Thus, every pre-Boolean algebra is obtained from a Boolean algebra by replacing each node in the
Boolean algebra by a cluster of equivalent nodes.

\begin{theorem}[{\cite[theorem\,11]{HamkinsLoewe2008:TheModalLogicOfForcing}}]
\label{Theorem.CanopiedTreesCompleteForS4.2}
The modal logic \theoryf{S4.2} is characterized by
the class of finite pre-Boolean algebras. That is, a modal
assertion is derivable in \theoryf{S4.2} if and only if it holds in all Kripke models having a finite pre-Boolean algebra frame.
\end{theorem}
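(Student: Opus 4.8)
The plan is to prove the biconditional by establishing soundness and completeness separately, obtaining completeness by reducing the classical finite-frame completeness of \theoryf{S4.2} down to the special class of pre-Boolean algebra frames by means of bounded morphisms (p-morphisms).

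For soundness I would simply read off the frame conditions corresponding to each axiom. Every pre-Boolean algebra is by definition a pre-order, so its accessibility relation is reflexive and transitive, which validates \axiomf{T} and \axiomf{4} (and \axiomf{K} holds on every frame). For \axiomf{.2}, whose frame correspondent is directedness---that $w\le u$ and $w\le v$ imply the existence of some $x$ with $u\le x$ and $v\le x$---I would observe that the quotient Boolean algebra has a top element $\onex$, so that $\onex$ lies above all nodes and any two nodes have a common upper bound. This survives passage to the pre-order, since replacing nodes by clusters does not affect comparability, so \axiomf{.2} is valid on every pre-Boolean algebra frame. As modus ponens and necessitation preserve validity on a fixed frame, every theorem of \theoryf{S4.2} is valid on all pre-Boolean algebra frames.

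For completeness I would argue contrapositively: if $\varphi$ is not a theorem of \theoryf{S4.2}, I must exhibit a pre-Boolean algebra frame refuting $\varphi$. Starting from the classical completeness of \theoryf{S4.2} with respect to finite directed pre-orders, I refute $\varphi$ in a finite model whose frame is a directed pre-order, and then, passing to the submodel generated by the refuting world, I may assume this frame $P$ is \emph{rooted} as well as directed (the generated subframe of a directed pre-order remains directed). The reduction then rests on the key lemma that every finite rooted directed pre-order $P$ is a surjective p-morphic image of some finite pre-Boolean algebra $\B$, via a map $f\colon\B\to P$. Granting this, I would pull back the refuting valuation along $f$, setting $b\in V'(p)\iff f(b)\in V(p)$; the standard p-morphism lemma then gives $(\B,V',b)\satisfies\psi\iff(P,V,f(b))\satisfies\psi$ for every modal $\psi$ and every world $b$, so any $b$ mapping to the refuting world also refutes $\varphi$. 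Thus $\varphi$ fails on the pre-Boolean algebra $\B$, as required.

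The main obstacle is this key lemma, that is, constructing the covering p-morphism. Here I would first take the quotient of $P$ to a finite directed partial order $\bar P$ with top, realize $\bar P$ as a p-morphic image of a finite Boolean algebra, namely a power set $\mathcal{P}(X)$ ordered by inclusion for a suitably chosen finite $X$, and then reinstate the cluster structure of $P$ by a routine replacement of nodes by clusters. The delicate point is defining $f\colon\mathcal{P}(X)\to\bar P$ so that the back condition holds: whenever $f(b)\le q$ in $\bar P$, one needs some $b'\ge b$ in the Boolean algebra with $f(b')=q$. The flexibility of the Boolean ordering---that joins are available to witness directedness, and that $X$ may be enlarged to separate the incomparabilities of $\bar P$---is exactly what should make such an $f$ possible, and checking the forth and back conditions is where the real work lies. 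An alternative route would be a direct selective-filtration or canonical-model construction that lands on a pre-Boolean algebra outright, but the p-morphism reduction isolates the combinatorial heart of the matter most cleanly.
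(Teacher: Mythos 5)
Your overall skeleton---soundness read off from the frame conditions, completeness transferred from the classical finite directed pre-order completeness of \theoryf{S4.2} along a surjective p-morphism from a finite pre-Boolean algebra, with the refuting valuation pulled back---is the right strategy, and indeed essentially the one in the source the paper cites for this theorem (the paper itself gives no proof, deferring to theorem 11 of Hamkins--L\"owe 2008). Your peripheral steps are all correct: generated subframes of directed pre-orders remain directed, and the cluster-reinstatement step genuinely is routine (it is also unavoidable: a p-morphic image of a finite partial order can never contain a proper cluster, so genuine Boolean algebra frames would not suffice---their logic contains Grzegorczyk-style principles beyond \theoryf{S4.2}). But there is a genuine gap exactly where you locate ``the real work'': you never construct the map $f\colon\mathcal{P}(X)\to\bar P$, and for an arbitrary finite rooted directed poset this is not a matter of routine checking, because $\bar P$ need not be a lattice---it has a top, but two elements may have several minimal upper bounds---so there is no canonical candidate for $f$, and the naive choices fail. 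For instance, when $\bar P$ comes from a tree with top, sending $A\subseteq X$ to the meet of a set of leaves violates the back condition (if $A$ contains only one leaf above an intermediate node $a$, no superset of $A$ maps to $a$), while sending $A$ to $\max(A)$, with maximless sets routed to the top, violates the forth condition, since a superset can acquire a maximum that $A$ lacked. ``Enlarging $X$ to separate incomparabilities'' does not by itself resolve this.

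The missing idea is to reduce further \emph{before} quotienting, so that the quotient is a lattice. Every finite rooted directed pre-order $F$ is the surjective p-morphic image of a \emph{canopied} pre-tree: unravel $F$ into a finite pre-tree (paths of strictly increasing clusters, so finiteness is preserved) and place a copy of the top cluster of $F$ above all nodes, mapping by projection; the back condition is witnessed either by extending a path or by moving into the canopy. The point of this reduction is that the quotient of a canopied pre-tree, a finite tree with a top adjoined, is a finite lattice: a rooted tree is a meet-semilattice (deepest common ancestors), and a finite meet-semilattice with a greatest element has all joins. For a finite lattice $L$, the join map $f\colon\mathcal{P}(L)\to L$ given by $f(A)=\bigvee A$ (with $\bigvee\varnothing$ the bottom) is a surjective p-morphism: forth is monotonicity of joins, and back is immediate, since if $\bigvee A=q\leq r$ then $B=A\cup\{r\}$ satisfies $f(B)=q\vee r=r$. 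Composing $\mathcal{P}(L)\to L$ with the cluster reinstatement you describe and then with the unraveling map yields the desired pre-Boolean algebra covering of $F$, and with this lemma supplied your proof goes through.
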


\begin{theorem}\label{Theorem.EquivalenceRelationsCompleteForS5}
The modal logic \theoryf{S5} is characterized by the class of finite equivalence relations
with one equivalence class (a single cluster).
\end{theorem}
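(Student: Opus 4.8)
The plan is to prove soundness and completeness separately. Soundness is routine: a single-cluster frame is a finite set $W$ carrying the total relation $R=W\cross W$, which is reflexive, transitive and symmetric. Hence $\axiomf{T}$, $\axiomf{4}$ and $\axiomf{5}$ all hold at every world — the last because $R$ is symmetric, so that $\possible\necessary\varphi$ forces $\varphi$ at the current world — while $\axiomf{K}$ together with modus ponens and necessitation preserve validity. Thus every theorem of $\theoryf{S5}$ is valid on every single-cluster frame.

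The substance is completeness: I must show that any $\Lmodal$-assertion valid on all finite single clusters is derivable in $\theoryf{S5}$, and I would argue contrapositively, showing that any $\theoryf{S5}$-consistent formula $\varphi$ is satisfiable in a finite single-cluster model. First I would form the canonical model of $\theoryf{S5}$, whose worlds are the maximal $\theoryf{S5}$-consistent sets and whose accessibility relation $R$ is the canonical one; the axioms $\axiomf{T}$, $\axiomf{4}$ and $\axiomf{5}$ force $R$ to be reflexive, transitive and symmetric respectively, so $R$ is an equivalence relation. Fixing a world $w$ containing $\varphi$, I would pass to the submodel generated by $w$, that is, the restriction to the $R$-equivalence class $[w]$. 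Since $R$ is an equivalence relation, any two worlds of $[w]$ are mutually accessible, so this generated submodel is a single cluster, and by the generated-submodel lemma the truth of every modal formula — in particular $\varphi$ — is preserved at $w$.

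This single-cluster model may be infinite, so the last step is to pass to a finite one by filtration through the finite set of subformulas of $\varphi$, closed under subformulas. The point to check is that filtering a total relation again yields a total relation: under the minimal filtration, $[u]$ accesses $[v]$ whenever some representatives are $R$-related, and since $R=[w]\cross[w]$ relates every pair, the filtrated relation on the finite quotient is again total, hence a single cluster. The filtration lemma then guarantees that the truth value of $\varphi$ is preserved at the image of $w$, so $\varphi$ is satisfiable in a finite single-cluster model, as required.

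The only genuinely delicate point is this filtration step, where one must simultaneously verify that the quotient relation stays total and that the coherence conditions of a filtration hold so that the truth of the relevant subformulas transfers; everything else is the standard canonical-model bookkeeping specific to the equivalence-relation axioms. In the spirit of the two preceding theorems, one may alternatively simply invoke the finite model property of $\theoryf{S5}$ with respect to universal frames, as recorded in \cite{ChagrovZakharyaschev1997:ModalLogic} or \cite{HughesCresswell1996:ANewIntroductionToModalLogic}.
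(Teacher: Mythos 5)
Your proposal is correct, but it is a genuinely different route from the paper's, for the simple reason that the paper offers no argument at all here: its proof of this theorem is a bare citation to \cite[corollaries\,5.19 \& 5.29]{ChagrovZakharyaschev1997:ModalLogic} and \cite[theorems\,4.29, 4.96 \& exercise\,6.6.4]{BlackburnDeRijkeVenema2001:ModalLogic}, treating the result as classical background, exactly as your closing sentence anticipates. What you supply instead is the standard self-contained argument that underlies those references: soundness on total frames, then canonical model, generated submodel, and filtration. The details check out. One point worth making explicit: the paper's axiom $\axiomf{5}$ is formulated as $\possible\necessary\varphi\implies\varphi$, which is literally the Brouwersche axiom $\axiomf{B}$ (contrapose and substitute $\neg\varphi$ for $\varphi$ to get $\varphi\implies\necessary\possible\varphi$); this canonically yields symmetry, as you assert, and together with $\axiomf{T}$ and $\axiomf{4}$ it does axiomatize \theoryf{S5}, so your canonical-frame analysis goes through. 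Your filtration step is also sound --- the minimal filtration always satisfies the two coherence conditions, and totality of the relation on the generated cluster trivially passes to the quotient, which is indeed the only delicate point. You could even avoid filtration entirely by a cruder finitization special to clusters: since truth of boxed formulas is uniform across a single cluster, keep the distinguished world together with one witness world for each subformula of the form $\possible\psi$ that is true in the cluster; the resulting finite subcluster preserves the truth of all subformulas. In short, the paper's citation buys brevity for a classical fact, while your proof buys self-containedness at the cost of the canonical-model and filtration machinery; both establish the theorem.
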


\begin{proof} Cf.\ \cite[corollaries\,5.19 \& 5.29]{ChagrovZakharyaschev1997:ModalLogic} or \cite[theorems\,4.29, 4.96 \& exercise\,6.6.4]{BlackburnDeRijkeVenema2001:ModalLogic}.\end{proof}

We have observed another modal theory to arise several times in the modal logic of various forcing classes, and so let us now introduce it. This is the logic of finite topless pre-Boolean algebras. A {\df topless} Boolean algebra is obtained from a Boolean algebra by omitting the top element $\onex$. Thus, a finite topless Boolean algebra is isomorphic to the collection of strictly proper subsets of a given finite
set. A topless pre-Boolean algebra is a partial pre-order (transitive and reflexive), such that the natural quotient by the relation $x\equiv y\iff x\leq y\leq x$ is a topless Boolean algebra. We define the modal theory \theoryf{S4.tBA}, or {\df topless pre-Boolean algebra logic}, to be the collection of all modal assertions that are true in all Kripke models whose frame is a finite topless pre-Boolean algebra. Thus, by definition, this logic is complete with respect to the class of finite topless pre-Boolean algebras. This logic is the \emph{smallest modal companion} of a well-known intermediate logic called \emph{Medvedev's Logic} $\theoryf{ML}$.\footnote{An intermediate logic is a propositional logic between intuitionistic and classical logic. A modal logic $\Lambda$ is a {\df modal companion} of an intermediate logic $\Lambda'$ if $\Lambda$ consists of the G\"odel translations of formulas in $\Lambda'$. If $\Lambda$ is characterized by a class $\mathcal{C}$ in the above sense, and $\Lambda'$ is characterized in the sense of Kripke semantics for intermediate logics by the same class $\mathcal{C}$, then $\Lambda$ is the smallest modal companion of $\Lambda'$. Medvedev's Logic $\theoryf{ML}$ is characterized by the class of finite topless pre-Boolean algebras and known not to be finitely axiomatizable \cite{Gabbay1970,Maksimovaetal1979}.}

\begin{observation}
 \ \theoryf{S4.tBA} is properly contained within
\theoryf{S4.2}.
\end{observation}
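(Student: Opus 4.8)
The plan is to establish the two halves of the claim separately: first the inclusion $\theoryf{S4.tBA}\of\theoryf{S4.2}$, and then its strictness. For the inclusion I would argue contrapositively, using the completeness of \theoryf{S4.2} with respect to finite pre-Boolean algebras (Theorem \ref{Theorem.CanopiedTreesCompleteForS4.2}): if $\varphi\notin\theoryf{S4.2}$, then $\varphi$ is refuted in some Kripke model whose frame is a finite pre-Boolean algebra $F$, whose quotient is $\mathcal{P}(S)$ for a finite set $S$ ordered by $\of$ with top $\onex=S$. The goal is to exhibit a finite topless pre-Boolean algebra $G$ together with a surjective p-morphism $f\colon G\twoheadrightarrow F$; pulling the refuting valuation back along $f$ (declaring a propositional variable true at $g\in G$ exactly when it is true at $f(g)$) yields, by the standard fact that p-morphisms preserve modal truth, a refutation of $\varphi$ on the topless frame $G$, whence $\varphi\notin\theoryf{S4.tBA}$.

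For the p-morphism itself I would take $G$ to be the topless pre-Boolean algebra on the one-point extension $S^+ = S\union\{*\}$, i.e.\ the proper subsets of $S^+$ ordered by $\of$, and define $f(A)=A\intersect S$ when $*\notin A$ and $f(A)=\onex$ when $*\in A$. The forth condition is immediate since $A\of A'$ gives $A\intersect S\of A'\intersect S$ and, once $*$ enters, both images are $\onex$. The back condition splits into cases: from an $A$ with $*\in A$ the only target world accessible from $f(A)=\onex$ is $\onex$ itself, witnessed by $A$; from an $A$ with $*\notin A$ and a target world $B$ with $A\intersect S\of B\of S$, one takes $A'=B$ when $B\ofneq S$ and $A'=S$ when $B=S$, both proper subsets of $S^+$ lying above $A$. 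Surjectivity is clear, as every $B\ofneq S$ is its own $f$-preimage and $\onex$ is hit by $S$. The one point requiring care, and the step I expect to be the main obstacle, is the faithful treatment of nontrivial clusters: one must give each node of $G$ a cluster whose size matches that of its $f$-image in $F$ (all $*$-containing nodes, together with the node $S$, receiving the size of the top cluster of $F$), and then check that the resulting map is still a p-morphism; the only nonroutine verification is the back condition into the top cluster, which is met because every lower node lies below the node $S$, whose cluster bijects onto that top cluster. I would present this as a routine refinement of the cluster-free construction.

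Finally, for strictness I would observe that $\axiomf{.2}\in\theoryf{S4.2}$ by definition, yet $\axiomf{.2}\notin\theoryf{S4.tBA}$, since the axiom fails on the smallest nontrivial topless pre-Boolean algebra. Concretely, on the three-world frame given by the proper subsets $\emptyset,\{a\},\{b\}$ of a two-element set, set a propositional variable $p$ true at $\{a\}$ and false at $\{b\}$; then $\possible\necessary p$ holds at $\emptyset$ (the maximal world $\{a\}$ forces $\necessary p$) while $\necessary\possible p$ fails at $\emptyset$ (the maximal world $\{b\}$ refutes $\possible p$). This topless frame thus refutes $\axiomf{.2}$, witnessing $\theoryf{S4.tBA}\neq\theoryf{S4.2}$ and completing the proof that the containment is proper.
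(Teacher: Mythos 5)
Your proof is correct and is essentially the paper's own argument: the paper also passes from a refuting model on a finite pre-Boolean algebra to one on the topless algebra of proper subsets of a one-point extension (adding a new atom and deleting the top), placing the top cluster's worlds at the new nodes, and likewise refutes $\axiomf{.2}$ on a topless frame via incompatible co-atoms. The only cosmetic difference is that you package the transfer as a surjective p-morphism with a pulled-back valuation, where the paper phrases the same correspondence as a model bisimulation.
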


\begin{proof}
First, we argue that \theoryf{S4.tBA} is contained in \theoryf{S4.2} by arguing that every Kripke model $M$ with a finite pre-Boolean algebra frame admits a model bisimilarity with a Kripke model $M^+$ having a finite topless pre-Boolean algebra frame. Consider first the case where the frame underlying $M$ is actually a Boolean algebra. We construct $M^+$  as follows: take the Boolean algebra that is the frame of $M$, add a new atom and consider the resulting generated Boolean algebra; then remove the top to create a topless Boolean algebra. In the resulting frame, keep the original worlds for the nodes that came from $M$ and place the top world of $M$ at each of the newly created nodes of the frame of $M^+$.
It is easy to check that $M$ and $M^+$ have a model bisimulation associating each of the new worlds to the top world of $M$, and the other worlds to themselves.
It follows that
the truths of $M$ and $M^+$ at their initial worlds are the same. Similarly, for the general case where the frame of $M$ is a finite
pre-Boolean algebra rather than a Boolean algebra, then there is a cluster of mutually accessible nodes at the top of $M$, and it is this
entire cluster that we duplicate in $M^+$ at each of the new positions in the topless Boolean algebra. Again there is a model bisimulation
of $M$ and $M^+$ associating each of these newly created worlds with their corresponding duplicate in the top cluster of $M$ and the other
worlds to themselves. Thus, again the truths of $M$ and $M^+$ at their initial worlds are the same. Finally, since any statement outside
\theoryf{S4.2} must fail in such a Kripke model $M$ with a finite pre-Boolean algebra frame, it follows that the statement also fails in
the corresponding Kripke model $M^+$ with a finite topless pre-Boolean algebra frame, and so it is also outside \theoryf{S4.tBA}, as
desired. So \theoryf{S4.tBA} is included in \theoryf{S4.2}.

Second, we argue that this inclusion is strict. As the topless Boolean algebras are not directed, it is easy to construct violations of
\axiomf{.2} by making a statement true on one co-atom and false on another. So \axiomf{.2} is not in \theoryf{S4.tBA}, and consequently
$\theoryf{S4.tBA}\ofneq\theoryf{S4.2}$, as desired.
\end{proof}

Since \theoryf{S4} is valid in any topless Boolean algebra, it follows that $\theoryf{S4}\of\theoryf{S4.tBA}$. This conclusion is strict in
light of the fact that all topless Boolean algebras satisfy the principle that whenever three mutually incompatible assertions are possibly
necessary, then it is possible to exclude one of them without yet deciding between the other two. This is expressible in the modal
language as
$$\possible\necessary\varphi_1\wedge\possible\necessary\varphi_2\wedge\possible\necessary\varphi_3\wedge\neg\possible[(\varphi_1\wedge\varphi_2)\vee
(\varphi_1\wedge\varphi_3)\vee(\varphi_2\wedge\varphi_3)]$$
$$\implies\possible[\possible\necessary\varphi_1\wedge\possible\necessary\varphi_2\wedge\neg\possible\necessary\varphi_3],$$
and there are similar assertions for four possibilities and five and so on. The assertion above is valid for \theoryf{S4.tBA} but not for \theoryf{S4} (consider a
frame that is a tree with one root and three leaves), and thus we may
summarize the situation as
$$\theoryf{S4}\quad\ofneq\quad\theoryf{S4.tBA}\quad\ofneq\quad\theoryf{S4.2}\quad\ofneq\quad\theoryf{S4.3}\quad\ofneq\quad\theoryf{S5}.$$

We close this section with two simple modal logic observations that will be of use in \S\S\,\ref{ssec:preserving} and \ref{Section.CCCforcing}.

\begin{observation}\label{obs:nec.2}
The modal logic
\theoryf{S4} proves $\axiomf{.2}\implies\necessary\axiomf{.2}$.\end{observation}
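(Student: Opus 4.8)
The plan is to read the relevant instance of $\axiomf{.2}$ as a single implication $A\implies B$, with antecedent $A=\possible\necessary\varphi$ and consequent $B=\necessary\possible\varphi$, and to exploit a modal-stability property of each side. First I would record two facts already available in \theoryf{S4}. The consequent is \emph{necessity-stable}, meaning $\vdash B\implies\necessary B$: indeed $\necessary\possible\varphi\implies\necessary\necessary\possible\varphi$ is simply axiom $\axiomf{4}$ applied to $\possible\varphi$. The antecedent is \emph{possibility-stable}, meaning $\vdash\possible A\implies A$: this is the dual form $\possible\possible\chi\implies\possible\chi$ of axiom $\axiomf{4}$, applied with $\chi=\necessary\varphi$, so that $\possible\possible\necessary\varphi\implies\possible\necessary\varphi$. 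Contraposing the latter gives $\vdash\neg A\implies\necessary\neg A$.

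The key step is then a general principle: in \theoryf{S4}, whenever $\vdash\neg A\implies\necessary\neg A$ and $\vdash B\implies\necessary B$, one also has $\vdash(A\implies B)\implies\necessary(A\implies B)$. To see this, assume $A\implies B$, that is, $\neg A\vee B$, and argue by disjunction elimination toward the goal $\necessary(\neg A\vee B)$. From the disjunct $\neg A$ we obtain $\necessary\neg A$, and since $\necessary\neg A\implies\necessary(\neg A\vee B)$ by necessitating the tautology $\neg A\implies(\neg A\vee B)$ and applying $\axiomf{K}$, we reach $\necessary(\neg A\vee B)$. From the disjunct $B$ we obtain $\necessary B$ and, by the same monotonicity move applied to $B\implies(\neg A\vee B)$, again $\necessary(\neg A\vee B)$. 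Either way the goal holds, which is precisely $\necessary(A\implies B)$.

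Putting these together with $A=\possible\necessary\varphi$ and $B=\necessary\possible\varphi$ yields $\theoryf{S4}\vdash\axiomf{.2}\implies\necessary\axiomf{.2}$, uniformly in $\varphi$. I expect the only real content to be the recognition that $\axiomf{.2}$ decomposes into an implication whose antecedent is possibility-stable and whose consequent is necessity-stable; once that is seen, the remaining modal bookkeeping---contraposition, necessitation of tautologies, and the monotonicity of $\necessary$ via $\axiomf{K}$---is entirely routine. Alternatively, one could verify the implication directly in every finite reflexive transitive frame, using transitivity twice, and appeal to completeness; but the syntactic derivation above is shorter and avoids invoking a completeness theorem not recorded here.
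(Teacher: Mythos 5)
Your proof is correct, but it takes a genuinely different route from the paper's. The paper argues contrapositively: from $\neg\necessary\axiomf{.2}$ it derives $\possible(\possible\necessary\varphi\wedge\neg\necessary\possible\varphi)$, distributes $\possible$ over the conjunction, rewrites $\neg\necessary\possible\varphi$ as $\possible\possible\necessary\neg\varphi$ by duality, collapses $\possible\possible$ to $\possible$ in each conjunct via axiom $\axiomf{4}$, and reassembles to reach $\neg\axiomf{.2}$. You instead argue directly, decomposing $\axiomf{.2}$ as $A\implies B$ with $A=\possible\necessary\varphi$ possibility-stable ($\vdash\possible A\implies A$) and $B=\necessary\possible\varphi$ necessity-stable ($\vdash B\implies\necessary B$), and proving the general transfer lemma that $(A\implies B)\implies\necessary(A\implies B)$ follows in \theoryf{S4} whenever $\vdash\neg A\implies\necessary\neg A$ and $\vdash B\implies\necessary B$; your case split is routinely formalizable, since $\vdash\neg A\implies\necessary(A\implies B)$ and $\vdash B\implies\necessary(A\implies B)$ combine propositionally. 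The underlying content coincides---the paper's two uses of $\possible\possible\chi\implies\possible\chi$ are exactly your stability claims for $A$ and, dually, for $\neg B$---so the difference is architectural rather than substantive. What your version buys is a reusable principle (any implication from a possibility-stable antecedent to a necessity-stable consequent is automatically necessitated in \theoryf{S4}) and freedom from the duality and $\possible$-distribution bookkeeping; what the paper's buys is a compact self-contained chain of implications. Your closing remark is also well taken: the semantic alternative would require a completeness theorem for \theoryf{S4} itself, which the paper does not record (it only records characterizations for \theoryf{S4.2}, \theoryf{S4.3} and \theoryf{S5}), so the syntactic derivation is indeed the right choice here.
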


\begin{proof} The following simple chain of transformations proves the claim, using the fact that $\possible\possible \varphi$ implies $\possible \varphi$ in \theoryf{S4}:
\begin{align*}
\neg\necessary\axiomf{.2} = \neg\necessary(\possible\necessary \varphi\to \necessary\possible \varphi) & \Rightarrow
  \possible(\possible\necessary\varphi \wedge \neg \necessary\possible\varphi)\\
  &\Rightarrow \possible\possible\necessary\varphi \wedge \possible\neg\necessary\possible\varphi\\
  &\Rightarrow \possible\possible\necessary\varphi \wedge \possible\possible\necessary\neg\varphi\\
  &\Rightarrow \possible\necessary \varphi \wedge \possible\necessary\neg\varphi\\
  &\Rightarrow \possible\necessary\varphi \wedge \neg\necessary\possible\varphi\\
  &\Rightarrow \neg(\possible\necessary\varphi \to \necessary\possible\varphi) = \neg\axiomf{.2}
\end{align*}
\end{proof}

\begin{observation}\label{obs:necessitation}
Let $\Lambda$ and $\mathrm{A}$ be sets of $\Lmodal$-formulas, and let $\Lambda^*$ be
the closure of $\Lambda \cup \mathrm{A}$ under modus ponens and necessitation. Let $M$ be a
transitive Kripke model and $v \in M$. Assume that
(1) for each $w \in M$, we have $\Lambda \subseteq \llbracket w\rrbracket_M$, and
(2) for each $\alpha \in \mathrm{A}$, we have $\Box \alpha \in \llbracket v\rrbracket_M$.
Then for every $w$ accessible from $v$, we have $\Lambda^* \subseteq \llbracket w\rrbracket_M$.
\end{observation}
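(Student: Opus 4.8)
The plan is to recast the inductive construction of $\Lambda^*$ as a single closure argument. Let $S$ be the set of all $\Lmodal$-formulas that hold at every world accessible from $v$ in $M$; that is, $\varphi \in S$ precisely when $\varphi$ is true at $w$ for every $w$ accessible from $v$. The conclusion we want is exactly $\Lambda^* \subseteq S$. Since $\Lambda^*$ is by definition the smallest set containing $\Lambda \cup \mathrm{A}$ and closed under modus ponens and necessitation, it suffices to verify that $S$ itself contains $\Lambda \cup \mathrm{A}$ and is closed under both rules.

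The base case splits according to the two hypotheses. For $\varphi \in \Lambda$, hypothesis (1) gives $\varphi \in \llbracket w\rrbracket_M$ for every $w \in M$, so in particular $\varphi$ holds at every world accessible from $v$, whence $\varphi \in S$. For $\alpha \in \mathrm{A}$, hypothesis (2) gives $\necessary\alpha \in \llbracket v\rrbracket_M$; unwinding the truth definition of $\necessary$ at $v$, this says precisely that $\alpha$ holds at every world accessible from $v$, so $\alpha \in S$. Thus $\Lambda \cup \mathrm{A} \subseteq S$. It is worth noting that this is where the phrasing of the conclusion (all $w$ accessible from $v$, rather than $v$ itself) becomes essential: the formulas of $\mathrm{A}$ are guaranteed only at the accessible worlds, and possibly not at $v$.

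It remains to check that $S$ is closed under the two rules. Closure under modus ponens is immediate: if $\varphi,\ \varphi \to \psi \in S$, then both hold at each $w$ accessible from $v$, and since each $\llbracket w\rrbracket_M$ is closed under modus ponens (as recorded above), $\psi$ holds there too, giving $\psi \in S$. Closure under necessitation is the one step that genuinely uses the transitivity of $M$, and I expect it to be the crux of the argument. Suppose $\varphi \in S$ and fix any $w$ accessible from $v$; to see that $\necessary\varphi$ holds at $w$, I must verify $\varphi$ at each $u$ accessible from $w$. Since $w$ is accessible from $v$ and $u$ is accessible from $w$, transitivity yields that $u$ is accessible from $v$, so $\varphi$ holds at $u$ because $\varphi \in S$. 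Hence $\necessary\varphi$ holds at the arbitrary accessible world $w$, so $\necessary\varphi \in S$. With both closure properties established, minimality of $\Lambda^*$ gives $\Lambda^* \subseteq S$, which is the desired conclusion.
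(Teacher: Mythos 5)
Your proof is correct, and it reaches the same conclusion via the same three essential facts as the paper --- hypothesis (1) for $\Lambda$ at all worlds, the unwinding of $\necessary\alpha$ at $v$ for $\mathrm{A}$, modus-ponens closure of each $\llbracket w\rrbracket_M$, and transitivity of the accessibility relation for necessitation --- but it packages them differently. The paper works bottom-up: it stratifies the closure as $\Lambda^*=\bigcup_{n\in\omega}\Lambda_n$, alternating modus-ponens stages and necessitation stages, and proves $\Lambda_n\subseteq\llbracket w\rrbracket_M$ for all $w$ accessible from $v$ by induction on $n$. You work top-down: you define $S$ to be the set of formulas true at every world accessible from $v$, verify that $S$ contains $\Lambda\cup\mathrm{A}$ and is closed under both rules, and invoke minimality of $\Lambda^*$. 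Your route is slightly cleaner in that it dispenses with the stage bookkeeping (the $2n\mapsto 2n+1$ and $2n+1\mapsto 2n+2$ steps) and with the implicit verification that the union of stages really is the closure; the paper's version, in exchange, makes the inductive generation of $\Lambda^*$ fully explicit and does not presuppose the ``smallest closed superset'' characterization of closure. You also correctly flag the one genuinely delicate point, which the paper leaves tacit: the conclusion must be stated for worlds accessible from $v$ rather than for $v$ itself, precisely because hypothesis (2) guarantees the formulas of $\mathrm{A}$ only at accessible worlds, and your necessitation step is where transitivity is indispensable, exactly as in the paper.
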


\begin{proof}
The set $\Lambda^*$ can be written as $\bigcup_{n\in\omega} \Lambda_n$ where
$\Lambda_0 := \Lambda \cup \mathrm{A}$, $\Lambda_{2n+1}$ is the closure of $\Lambda_{2n}$ under
modus ponens, and $\Lambda_{2n+2} := \Lambda_{2n+1} \cup \set{\necessary\varphi \st \varphi \in
\Lambda_{2n+1}}$.

We'll prove by induction that $\Lambda^* \subseteq \llbracket w\rrbracket_M$ for all $w$ accessible
from $v$. The base case follows from assumptions (1) and (2). Since each $\llbracket w\rrbracket_M$
is closed under modus ponens, the step $2n \mapsto 2n+1$ is trivial. For the
step $2n+1 \mapsto 2n+2$, fix $w$ accessible from $v$ and $\varphi$ in $\Lambda_{2n+1}$, and
show that $\Box \varphi \in \llbracket w\rrbracket_M$. Let $w'$ be accessible from $w$; by transitivity, we
have that $w'$ is accessible from $v$, and thus, $\varphi \in \llbracket w'\rrbracket$. Since $w'$ was
arbitrary, $\Box\varphi \in \llbracket w\rrbracket$.
\end{proof}

\section{Forcing with a forcing class $\Gamma$}
\label{Section.ForcingWithGamma}

Suppose that $\Gamma$ is a fixed definable class of forcing notions, with a fixed definition. We freely interpret this definition in any
model of set theory, thereby reading $\Gamma$ {\it de dicto} rather than {\it de re}. We say that $\Gamma$ is a {\df forcing class} if in
every such model, this definition defines a class of forcing notions, closed under the equivalence of forcing. A forcing class $\Gamma$ is
{\df reflexive} if in every model of set theory, $\Gamma$ contains the trivial forcing poset. In this case, every model of set theory is a
$\Gamma$ forcing extension of itself. The class $\Gamma$ is {\df transitive} if it closed under finite iterations, in the sense that if
$\Q\in\Gamma$ and $\dot\R\in\Gamma^{V^\Q}$, then $\Q*\dot\R\in\Gamma$. Thus, any $\Gamma$ forcing extension of a $\Gamma$ forcing extension
is a $\Gamma$ forcing extension. The class $\Gamma$ is {\df closed under product forcing} if, necessarily, whenever $\Q$ and $\R$ are in
$\Gamma$, then so is $\Q\cross\R$. Related to this, $\Gamma$ is {\df persistent} if, necessarily, members of $\Gamma$ are $\Gamma$
necessarily in $\Gamma$; that is, if $\P,\Q\in\Gamma$ implies $\P\in\Gamma^{V^\Q}$ in all models. These properties are related in that
every transitive persistent class is closed under products, since $\Q\cross\R$ is forcing equivalent to $\Q*\check\R$. The class $\Gamma$
is {\df directed} if whenever $\P,\Q\in\Gamma$, then there is $\R\in\Gamma$, such that both $\P$ and $\Q$ are factors of $\R$ by further
$\Gamma$ forcing, that is, if $\R$ is forcing equivalent to $\P*\dot{\mathbb S}$ for some $\dot{\mathbb S}\in\Gamma^{V^\P}$ and also
equivalent to $\Q*\dot\T$ for some $\dot\T\in\Gamma^{V^\Q}$. This is stronger than asserting merely that any two members of $\Gamma$ are
absorbed by some other forcing in $\Gamma$, since we require that the quotient forcing is also in $\Gamma$. Note that if $\Gamma$ is
transitive and persistent, then product forcing shows that $\Gamma$ is directed. The class $\Gamma$ has the {\df linearity property} if for
any two forcing notions $\P,\Q$, then one of them is forcing equivalent to the other one followed by additional $\Gamma$ forcing; that is,
either $\P$ is forcing equivalent to $\Q*\Rdot$ for some $\Rdot\in\Gamma^{V^\Q}$ or $\Q$ is forcing equivalent to $\P*\Rdot$ for some
$\Rdot\in\Gamma^{V^\Q}$. Combining these notions, we define that $\Gamma$ is a {\df linear forcing class} if $\Gamma$ is reflexive,
transitive and has the linearity property. Any linear forcing class is clearly directed. Similar related definitions can be found in
\cite{Leibman2004:Dissertation}.

Any forcing class $\Gamma$ leads to the corresponding $\Gamma$ forcing modalities. Namely, a set-theoretic sentence $\psi$ is {\df
$\Gamma$-forceable} or {\df $\Gamma$-possible}, written $\possible_\Gamma\psi$, if $\psi$ holds in a forcing extension by some forcing
notion in $\Gamma$, and $\psi$ is {\df $\Gamma$-necessary}, written $\necessary_\Gamma\psi$, if $\psi$ holds in all forcing extensions by
forcing notions in $\Gamma$. These modal operators are easily seen to obey certain modal validities, such as the following, for any class
$\Gamma$.
$$\begin{array}{rl}
 \axiomf{K} & \necessary_\Gamma(\varphi\implies \psi)\implies(\necessary_\Gamma\varphi\implies\necessary_\Gamma\psi), \quad \text{and}\\
 \axiomf{Dual} & \possible_\Gamma \varphi\iff \neg\necessary_\Gamma\neg\varphi.\\
\end{array}$$
The validity of other statements will depend on the class $\Gamma$. A modal assertion $\varphi(p_0,\ldots,p_n)$ is a {\df valid principle
of\/ $\Gamma$ forcing} if all substitution instances $\varphi(\psi_0,\ldots,\psi_n)$ hold under the $\Gamma$-forcing interpretation, where
we substitute arbitrary set-theoretic assertions $\psi_i$ for the propositional variables $p_i$ and interpret $\necessary$ as
$\necessary_\Gamma$. More formally, for any forcing class $\Gamma$, every assignment $p_i\mapsto\psi_i$ of the propositional variables
$p_i$ to set-theoretical assertions $\psi_i$ extends recursively to a {\df $\Gamma$ forcing translation}, a function $H:\Lmodal\to\Lst$
satisfying $H(p_i)=\psi_i$ and the obvious recursive rules for conjunction $H(\varphi\wedge\eta)=H(\varphi)\wedge H(\eta)$, negation
$H(\neg\varphi)=\neg H(\varphi)$ and modality $H(\necessary\varphi)=\necessary_\Gamma H(\varphi)$, where in this last case we mean the
assertion in the language of set theory asserting that $H(\varphi)$ has Boolean value one for any forcing notion satisfying the definition
of $\Gamma$. In this terminology, the {\df modal logic of $\Gamma$ forcing} over a model of set theory $W$ is the set $$\set{\varphi\in
\Lmodal\st W\satisfies H(\varphi)\text{ for all }\Gamma\text{ forcing translations }H}.$$ Note that the interpretation of iterated modal
expressions, such as $\possible_\Gamma\necessary_\Gamma\psi$, leads to the situation where $\Gamma$ is reinterpreted in the appropriate
forcing extensions by forcing in $\Gamma$. For example, the c.c.c.~forcing notions of a c.c.c.~forcing extension do not necessarily include
all the c.c.c.~forcing notions of the original ground model. To improve readability, we will sometimes omit the subscripts from
$\possible_\Gamma$ and $\necessary_\Gamma$ when the class $\Gamma$ is clear.

\begin{theorem}[\cite{Leibman2004:Dissertation}]\ \label{Theorem.EasyValidities}
\begin{enumerate}
 \item\ \theoryf{S4} is valid for any reflexive transitive forcing class.
 \item\ \theoryf{S4.2} is valid for any reflexive transitive directed forcing class.
 \item\ \theoryf{S4.3} is valid for any linear forcing class.
\end{enumerate}
\end{theorem}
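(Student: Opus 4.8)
The plan is to verify each of the three soundness claims directly from the modal axioms, by checking that each relevant axiom schema holds under the $\Gamma$-forcing interpretation, given the corresponding closure property of $\Gamma$. Since each listed theory is obtained by closing a set of axioms under modus ponens and necessitation, and since the $\Gamma$-forcing validities are automatically closed under these rules (modus ponens is immediate, and necessitation holds because if $\varphi$ holds in every model then in particular it holds in every $\Gamma$-extension), it suffices to verify that each individual axiom is valid. The axioms $\axiomf{K}$ and $\axiomf{Dual}$ hold for every class $\Gamma$, as already noted in the excerpt, so the work concentrates on $\axiomf{T}$, $\axiomf{4}$, $\axiomf{.2}$, and $\axiomf{.3}$.

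For part (1), I would verify $\axiomf{T}$ and $\axiomf{4}$. Reflexivity of $\Gamma$ gives $\axiomf{T}$: since the trivial forcing lies in $\Gamma$, any model is a $\Gamma$-extension of itself, so $\necessary_\Gamma\varphi$ implies $\varphi$. Transitivity of $\Gamma$ gives $\axiomf{4}$: if $\varphi$ holds in every $\Gamma$-extension, then since a $\Gamma$-extension of a $\Gamma$-extension is again a $\Gamma$-extension (because $\Gamma$ is closed under iterations $\Q*\dot\R$), we get $\necessary_\Gamma\necessary_\Gamma\varphi$. Here one must be careful, as flagged in the excerpt's discussion of iterated modalities, that the inner $\necessary_\Gamma$ is reinterpreted in the forcing extension $V^\Q$, which is exactly why transitivity is phrased in terms of $\dot\R\in\Gamma^{V^\Q}$.

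For part (2), I would add $\axiomf{.2}$ to the verification, using directedness. The statement $\possible_\Gamma\necessary_\Gamma\varphi$ says that some $\Gamma$-extension $V^\Q$ satisfies $\necessary_\Gamma\varphi$, i.e.\ $\varphi$ holds in all further $\Gamma$-extensions of $V^\Q$. Given directedness, any $\Gamma$-extension $V^\R$ can be absorbed together with $\Q$ into a common $\Gamma$-extension $V^\bbS$ lying above both by further $\Gamma$ forcing; since $V^\bbS$ is a $\Gamma$-extension of $V^\Q$, it satisfies $\varphi$, and since it is a $\Gamma$-extension of $V^\R$, this witnesses $\possible_\Gamma\varphi$ in $V^\R$. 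As $V^\R$ was arbitrary, $\necessary_\Gamma\possible_\Gamma\varphi$ holds, giving $\axiomf{.2}$. For part (3), I would replace directedness by the linearity property to obtain $\axiomf{.3}$: suppose $\possible_\Gamma\varphi$ and $\possible_\Gamma\psi$, witnessed by $\Gamma$-extensions $V^\P$ and $V^\Q$ respectively. By linearity, one of $\P,\Q$ is forcing equivalent to the other followed by further $\Gamma$ forcing; say $V^\Q$ is a $\Gamma$-extension of $V^\P$. Then $V^\P$ is a $\Gamma$-extension in which $\varphi$ holds and from which $\psi$ is $\Gamma$-forceable (via the quotient forcing), so $V^\P$ satisfies $\varphi\wedge\possible_\Gamma\psi$, witnessing the required disjunction; the symmetric case gives $\psi\wedge\possible_\Gamma\varphi$.

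\textbf{The main obstacle} I anticipate is the careful handling of the \emph{de dicto} reinterpretation of $\Gamma$ in forcing extensions, which is precisely where the specific closure hypotheses (transitivity via $\dot\R\in\Gamma^{V^\Q}$, directedness and linearity via quotient forcings in $\Gamma^{V^\P}$) must be invoked rather than naive ground-model versions. In particular, for $\axiomf{.2}$ and $\axiomf{.3}$ the crucial point is that directedness and the linearity property are defined to require the \emph{quotient} forcing to lie in $\Gamma$, so that the common upper extension $V^\bbS$ (or the comparison extension) is genuinely reachable by $\Gamma$ forcing from each side; this is exactly the strengthening the excerpt emphasizes over mere joint absorption. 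The modal bookkeeping itself is routine once these forcing facts are set up correctly, so I expect the entire argument to reduce to a clean translation of the forcing closure properties into the corresponding modal axioms.
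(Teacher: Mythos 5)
Your proposal is correct and takes essentially the same approach as the paper, which presents this theorem as an exercise with exactly your decomposition: $\axiomf{K}$ is valid for any forcing class, $\axiomf{T}$ follows from reflexivity, $\axiomf{4}$ from transitivity (with the \emph{de dicto} reinterpretation of $\Gamma$ via $\dot\R\in\Gamma^{V^\Q}$), $\axiomf{.2}$ from directedness, and $\axiomf{.3}$ from the linearity property, in each case using that the quotient forcing lies in $\Gamma$. Moreover, your observation that closure under necessitation is unproblematic because the closure hypotheses are required to hold in all models (so the axioms remain valid in every $\Gamma$ extension) is precisely the one subtlety the paper's proof explicitly flags.
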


\begin{proof}
The reader is encouraged to work through the details as an exercise in understanding the forcing modalities. Axiom $\axiomf{K}$ is valid
for any forcing class. Axiom $\axiomf{T}$ is valid for any reflexive forcing class. Axiom $\axiomf{4}$ is valid for any transitive forcing
class. Axiom $\axiomf{.2}$ is valid for any directed forcing class. And axiom $\axiomf{.3}$ is valid for any forcing class with the
linearity property. We note that one subtlety of the argument is that it is not sufficient merely to check the validity of the axioms
$\axiomf{K}$, $\axiomf{T}$, $\axiomf{4}$ and $\axiomf{.3}$ alone, since the theory \theoryf{S4.3}, for example, is defined to be the
closure of these axioms under modus ponens {\it and necessitation}. Thus, one needs to know that the axioms remain valid in any $\Gamma$
extension. In our case here, the definition of what it means to be a linear class requires that the desired property holds in all models,
which is more than sufficient.
\end{proof}

The construction used in the next lemma lies at the heart of our method, serving as the principal technique connecting modal truth in
Kripke models with set-theoretic truth in models of set theory.  It is a general version of
\cite[lemma\,7.1]{HamkinsLoewe2008:TheModalLogicOfForcing}, employing the notion of a $\Gamma$-labeling, which we introduce here.

\begin{definition}\label{Definition.GammaLabeling}\rm
A {\df $\Gamma$-labeling} of a frame $F$ for a model of set theory $W$ is an assignment to each node $w$ in $F$ an assertion $\Phi_w$ in
the language of set theory, such that
 \begin{enumerate}
  \item The statements $\Phi_w$ form a mutually exclusive partition of truth in the $\Gamma$ forcing extensions of $W$, meaning that
      every such extension $W[G]$ satisfies exactly one $\Phi_w$.
  \item Any $\Gamma$ forcing extension $W[G]$ in which $\Phi_w$ is true satisfies $\possible\Phi_u$ if and only if $w\leq_F u$.
  \item $W\satisfies\Phi_{w_0}$, where $w_0$ is a given initial element of $F$.
 \end{enumerate}
\end{definition}
The formal assertion of these properties is called the Jankov-Fine formula for $F$ (cf. \cite[\S\,7]{HamkinsLoewe2008:TheModalLogicOfForcing}).

\begin{lemma}\label{Lemma.Labeling}
Suppose that $w\mapsto\Phi_w$ is a $\Gamma$-labeling of a finite frame $F$ for a model of set theory $W$ and that $w_0$ is an initial world
of $F$. Then for any Kripke model $M$ having frame $F$, there is an assignment of the propositional variables to set-theoretic assertions
$p\mapsto\psi_p$ such that for any modal assertion $\varphi(p_0,\ldots,p_k)$,
$$(M,w_0)\satisfies\varphi(p_0,\ldots,p_k)\quad\text{ iff }\quad W\satisfies\varphi(\psi_{p_0},\ldots,\psi_{p_k}).$$
In particular, any modal assertion $\varphi$ that fails at $w_0$ in $M$ also fails in $W$ under the $\Gamma$ forcing interpretation.
Consequently, the modal logic of $\Gamma$ forcing over $W$ is contained in the modal logic of assertions valid in $F$.
\end{lemma}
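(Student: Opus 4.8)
The plan is to define the substitution $p\mapsto\psi_p$ directly from the Kripke model $M$ and the labeling, and then prove the stated biconditional by induction on the complexity of $\varphi$. For each propositional variable $p$, let $\psi_p$ be the disjunction $\bigvee_{w\in M,\ (M,w)\satisfies p}\Phi_w$, taken over those worlds $w$ of $M$ at which $p$ holds. (The frame $F$ is finite, so this is a finite disjunction, hence a legitimate assertion in $\Lst$.) The intuitive content is that the unique $\Phi_w$ true in a given $\Gamma$ extension $W[G]$ pins down a corresponding world $w$ of $M$, and under this correspondence the set-theoretic truth of $\psi_p$ in $W[G]$ matches the modal truth of $p$ at $w$. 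I would first record this matching precisely as a sublemma: by property (1), every $\Gamma$ extension $W[G]$ satisfies exactly one $\Phi_w$, so there is a well-defined map sending $W[G]$ to that world $w$, and by construction $W[G]\satisfies\psi_p$ if and only if $(M,w)\satisfies p$.

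With the base case thus arranged, I would prove the sharper statement that for every world $w$ of $F$ and every $\Gamma$ extension $W[G]$ satisfying $\Phi_w$, and every subformula $\varphi$, we have $(M,w)\satisfies\varphi$ iff $W[G]\satisfies\varphi(\psi_{p_0},\ldots,\psi_{p_k})$; the lemma's claim is then the special case $w=w_0$, $G$ trivial, using property (3) that $W\satisfies\Phi_{w_0}$. The Boolean connectives are immediate from the recursive rules for the forcing translation $H$ and the fact that $\models$ commutes with $\neg$ and $\wedge$. The modal case is where property (2) does the work: to see that $(M,w)\satisfies\possible\varphi$ iff $W[G]\satisfies\possible_\Gamma H(\varphi)$, I argue each direction by passing through the accessibility relation. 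If $(M,w)\satisfies\possible\varphi$, there is $u$ with $w\leq_F u$ and $(M,u)\satisfies\varphi$; by property (2), since $W[G]\satisfies\Phi_w$ and $w\leq_F u$, we have $W[G]\satisfies\possible\Phi_u$, so some further $\Gamma$ extension $W[G][H]$ satisfies $\Phi_u$, and the induction hypothesis applied at $u$ gives $W[G][H]\satisfies H(\varphi)$, whence $W[G]\satisfies\possible_\Gamma H(\varphi)$. Conversely, if $W[G]\satisfies\possible_\Gamma H(\varphi)$, some $\Gamma$ extension $W[G][H]$ satisfies $H(\varphi)$; by property (1) it satisfies a unique $\Phi_u$, and by property (2) the fact that $W[G]\satisfies\possible\Phi_u$ forces $w\leq_F u$, so $u$ is accessible from $w$, and the induction hypothesis gives $(M,u)\satisfies\varphi$, hence $(M,w)\satisfies\possible\varphi$.

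I expect the main obstacle to be the careful handling of the \emph{de dicto} reinterpretation of $\Gamma$ across iterated extensions: the inductive step must be stated not just at $W$ but simultaneously at every $\Gamma$ extension $W[G]$, because a modal operator is evaluated by passing to further extensions in which $\Gamma$ is recomputed. This is exactly why the inductive hypothesis is formulated for all pairs $(w,W[G])$ with $W[G]\satisfies\Phi_w$ rather than for $W$ alone, and it is also why property (2) is phrased as a statement about \emph{any} $\Gamma$ extension in which $\Phi_w$ holds, together with the transitivity of $\Gamma$ guaranteeing that an extension of an extension is again a $\Gamma$ extension. Once the inductive invariant is set up to track the current extension along with its world, the verification itself is routine. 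The final two sentences of the lemma are then immediate consequences: taking $\varphi$ to be any assertion failing at $w_0$ in $M$ yields, via property (3) and the trivial extension, a set-theoretic assertion failing in $W$ under the $\Gamma$ forcing interpretation; and since any modal validity failing in some Kripke model on $F$ can be so transferred, the modal logic of $\Gamma$ forcing over $W$ is contained in the set of assertions valid in $F$.
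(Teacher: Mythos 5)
Your proposal is correct and follows essentially the same route as the paper's proof: the same definition $\psi_p=\bigvee\set{\Phi_w\st (M,w)\satisfies p}$, the same strengthened inductive claim proved simultaneously for all $\Gamma$ extensions $W[G]$ satisfying $\Phi_w$, and the same use of labeling properties (1) and (2) in the modal step. Your explicit remarks on the \emph{de dicto} reinterpretation of $\Gamma$ and the role of transitivity are a slightly more careful articulation of points the paper leaves implicit, but the argument is the same.
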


\begin{proof}
Suppose that $w\mapsto\Phi_w$ is a $\Gamma$-labeling of $F$ for $W$, and suppose that $M$ is a Kripke model with frame $F$. Thus, we may
view each $w\in F$ as a propositional world in $M$. For each propositional variable $p$, let $\psi_p=\bigvee\set{\Phi_w\st (M,w)\satisfies
p}$, the join of the set-theoretic statements $\Phi_w$ associated with each world $w$ where $p$ is true in $M$. We shall prove the lemma by
proving the more uniform claim that whenever $W[G]$ is a $\Gamma$ forcing extension of $W$ and $W[G]\satisfies\Phi_w$, then
 $$(M,w)\satisfies\varphi(p_0,\ldots,p_k)\quad\text{ iff }\quad
 W[G]\satisfies\varphi(\psi_{p_0},\ldots,\psi_{p_k}).$$
We prove this for all such $W[G]$ simultaneously by induction on the complexity of $\varphi$. The atomic case follows immediately from the
definition of $\psi_p$. Boolean combinations go through easily. Consider now the modal operator case. If
$W[G]\satisfies\possible\varphi(\psi_{p_0},\ldots,\psi_{p_k})$, then there is a further $\Gamma$ extension $W[G][H]$ satisfying
$\varphi(\psi_{p_0},\ldots,\psi_{p_k})$. This extension $W[G][H]$ must satisfy some $\Phi_u$, and consequently by induction we know
$(M,u)\satisfies\varphi(p_0,\ldots,p_k)$. Since $\Phi_u$ was $\Gamma$ forceable over $W[G]$, where $\Phi_w$ was true, it follows by the
labeling properties that $w\leq_F u$. Thus, $(M,w)\satisfies\possible\varphi(p_0,\ldots,p_k)$, as desired. Conversely, if
$(M,w)\satisfies\possible\varphi(p_0,\ldots,p_k)$, then there is a $u$ with $w\leq_F u$ and $(M,u)\satisfies\varphi(p_0,\ldots,p_k)$. Thus,
by induction, any $\Gamma$ forcing extension with $\Phi_u$ will satisfy $\varphi(\psi_{p_0},\ldots,\psi_{p_k})$. Since $\Phi_u$ is
forceable over any $\Gamma$ extension $W[G]$ with $\Phi_w$, it follows that any such $W[G]$ will satisfy
$\possible\varphi(\psi_{p_0},\ldots,\psi_{p_k})$, as desired.

The further final claims in the lemma now follow immediately.
\end{proof}

\section{Control statements: buttons, switches and ratchets}\label{Section.ControlStatements}

In the previous section, we reduced much of the problem of determining the modal logic of $\Gamma$ forcing to the question of whether
certain kinds of frames admit $\Gamma$-labelings. In this section, we shall prove that the existence of such labelings for large classes of
finite frames often breaks down into simpler, more modular control statements, such as what we call buttons, switches and ratchets. Since
labelings of complex frames can be constructed from these more fundamental control statements, the question of whether a given class of
frames admit labelings often reduces to the question of whether the forcing class allows for independent families of these control
statements.

Buttons and switches were introduced in \cite{HamkinsLoewe2008:TheModalLogicOfForcing}; here, we shall augment them with ratchets, weak
buttons and other types of control statements. Suppose that $\Gamma$ is a reflexive transitive forcing class. A {\df switch} for $\Gamma$
is a statement $s$ such that both $s$ and $\neg s$ are $\Gamma$ necessarily possible. A {\df button} for $\Gamma$ is a statement $b$ that
is $\Gamma$ necessarily possibly necessary. In the case that \theoryf{S4.2} is valid for $\Gamma$, this is equivalent to saying that $b$ is
possibly necessary. The button $b$ is {\df pushed} when $\necessary b$ holds, and otherwise it is {\df unpushed}. A finite collection of
buttons and switches (or other controls of this type) is {\df independent} if necessarily, each can be operated without affecting the truth
of the others. For more details, cf.\ \cite[p.\,1798]{HamkinsLoewe2008:TheModalLogicOfForcing}. A button $b$ is {\df pure} if whenever it
becomes true, it becomes necessarily true, that is, if $\necessary(b\implies\necessary b)$. Every (unpushed) button $b$ has a corresponding
(unpushed) pure button $\necessary b$, and pure buttons are sometimes more convenient.

A sequence of first-order statements $r_1$, $r_2,\ldots r_n$ is a {\df ratchet} for $\Gamma$ of length $n$ if each is an unpushed pure
button for $\Gamma$, each necessarily implies the previous, and each can be pushed without pushing the next (this notion was called a \emph{volume control} in \cite{HamkinsLoewe2008:TheModalLogicOfForcing}). This is expressed formally as follows:
$$\begin{array}{rl}
 &\neg r_i\hskip 3in\\
 &\necessary(r_i\implies\necessary r_i)\\
 &\necessary(r_{i+1}\implies r_i)\\
 &\necessary[\neg r_{i+1}\implies\possible( r_i\wedge\neg r_{i+1})]
\end{array}$$
The key idea of a ratchet is that it is unidirectional, any further $\Gamma$ forcing can only increase the ratchet value or leave it the same.
It is sometimes convenient to introduce the ratchet statement $r_0$ as any tautological statement $\top$ (an already-pushed button). A
model has ratchet value (or volume) at least $i$ when $r_i$ holds and exactly $i$ when $r_i\wedge\neg r_{i+1}$ holds, for $i<n$; ratchet
value exactly $n$ means $r_n$ is true. Further $\Gamma$ forcing pushes the value only higher, and any higher value is precisely attainable
in an appropriate extension. A ratchet of length $n$ partitions the $\Gamma$ forcing extensions into $n+1$ equivalence classes---those
having the same ratchet value---and from any model in a lower ratchet class, one can perform further $\Gamma$ forcing to arrive at a model
in any desired higher class. If $r_1$, $r_2,\ldots r_n$ is a ratchet and $s_0$, $s_1,\ldots s_k$ are switches, then this combined family is
independent if in any extension, any finite pattern of the switches is obtainable without increasing the ratchet value. A transfinite
sequence of set-theoretic statements $\<r_\alpha\st 0<\alpha<\delta>$, perhaps involving parameters, is a ratchet for $\Gamma$ of length
$\delta$ if each is an unpushed pure button for $\Gamma$, each necessarily implies the previous, and each can be pushed without pushing the
next. The ratchet is {\df uniform} if there is a formula $r(x)$ with one free variable, such that $r_\alpha=r(\alpha)$. Every finite length
ratchet is uniform. The ratchet is {\df continuous}, if for every limit ordinal $\lambda<\delta$, the statement $r_\lambda$ is equivalent
to $\forall\alpha{<}\lambda\, r_\alpha$. Any uniform ratchet can be made continuous by reindexing, replacing each $r_\beta$ by the
assertion ``$\forall\alpha{<}\beta\, r_{\alpha+1}$.'' A {\df long ratchet} is a uniform ratchet $\<r_\alpha\st 0<\alpha<\ORD>$ of length
$\ORD$, with the additional property that no $\Gamma$ forcing extension satisfies all $r_\alpha$, so that every $\Gamma$ extension exhibits
some ordinal ratchet value. We will explain in theorem \ref{Theorem.LongRatchetImpliesS4.3} that from a long ratchet, one may construct a
mutually independent family of switches and a ratchet of any desired length.

A {\df weak button} is a statement $b$ that is possibly necessary. We have mentioned that under \theoryf{S4.2}, every weak button is a
button, but without \theoryf{S4.2}, this conclusion does not follow, and it can be that a statement $b$ and its negation $\neg b$ are both
weak buttons for a given class of forcing. For example, if $\Gamma$ is c.c.c.~forcing, then the assertion ``the $\LL$-least Souslin tree
has a branch'' and its negation are both weak buttons, since one can either add a branch, which pushes the button, or specialize the tree,
which prevents branches and therefore pushes the negation. A sequence of weak buttons $b_0,\ldots,b_{n-1}$ is {\df strongly independent} if
no extension has all of them pushed, but in any extension, any additional one of them can be pushed without pushing any of the others, as long as
this wouldn't push them all. Such a family is similar to an independent family of buttons, except that one cannot push them all.

The next three theorems were implicit in \cite{HamkinsLoewe2008:TheModalLogicOfForcing}, but we revisit them here explicitly in the context
of an arbitrary forcing class $\Gamma$. Let us begin with the easiest case.

\begin{theorem}\label{Theorem.SwitchesImpliesS5}
If\/ $\Gamma$ is a reflexive transitive forcing class having arbitrarily large finite independent families of switches over a model of set
theory $W$, then the valid principles of $\Gamma$ forcing over $W$ are contained within the modal theory \theoryf{S5}.
\end{theorem}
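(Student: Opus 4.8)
The plan is to prove the contrapositive: I will show that any modal assertion $\varphi$ not in \theoryf{S5} fails under the $\Gamma$ forcing interpretation over $W$, so that every valid principle of $\Gamma$ forcing over $W$ must lie in \theoryf{S5}. Two results drive the argument. First, the completeness theorem for \theoryf{S5} (Theorem \ref{Theorem.EquivalenceRelationsCompleteForS5}) lets me replace the hypothesis $\varphi\notin\theoryf{S5}$ by a concrete finite Kripke countermodel whose frame is a single cluster. Second, the Labeling Lemma (Lemma \ref{Lemma.Labeling}) transfers failure of $\varphi$ in such a frame down to failure of $\varphi$ in $W$, provided I can exhibit a $\Gamma$-labeling of that frame. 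Thus the substance of the proof is to manufacture a $\Gamma$-labeling of a single cluster out of an independent family of switches.

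So I would begin by fixing $\varphi\notin\theoryf{S5}$. By Theorem \ref{Theorem.EquivalenceRelationsCompleteForS5}, $\varphi$ fails at some world of a Kripke model $M$ whose frame is a finite single cluster, say with $N$ worlds. I then choose $k$ large enough that $2^k\geq N$ and that $\Gamma$ admits an independent family of $k$ switches $s_0,\dots,s_{k-1}$ over $W$; this is the single point at which the hypothesis of arbitrarily large finite independent families of switches is invoked.

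Next I would take as frame $F$ the single cluster on the node set $2^k$ of all binary strings $\sigma$ of length $k$, and define the labeling $\sigma\mapsto\Phi_\sigma$ by $\Phi_\sigma=\bigwedge_{\sigma(i)=1}s_i\wedge\bigwedge_{\sigma(i)=0}\neg s_i$. Verifying the three clauses of Definition \ref{Definition.GammaLabeling} is exactly where the switch properties are consumed. Every $\Gamma$ extension assigns a definite truth value to each $s_i$, hence satisfies exactly one $\Phi_\sigma$, which is the mutually exclusive partition of clause (1). Independence of the family means that from any $\Gamma$ extension every pattern $\tau$ is again $\Gamma$-forceable, so $\possible\Phi_\tau$ holds starting from any $\Phi_\sigma$; since $\leq_F$ is total on a single cluster this matches clause (2), whose ``only if'' direction is vacuous here. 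For clause (3) I let $\sigma^*$ be the (a priori unknown but well-defined) pattern of the switches holding in $W$, so that $W\models\Phi_{\sigma^*}$, and I take $w_0:=\sigma^*$ as the designated initial world, which is legitimate because in a single cluster every node reaches every node and so every node is initial.

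Finally I would realize the \theoryf{S5}-countermodel on $F$ with its failing world placed at $\sigma^*$. Because a single cluster is symmetric under permutation of its worlds, I can assign the valuation of the failing world of $M$ to node $\sigma^*$, the remaining valuations of $M$ to distinct other nodes, and duplicate valuations to the leftover nodes; the resulting Kripke model $M'$ on $F$ still falsifies $\varphi$ at $\sigma^*=w_0$. Applying Lemma \ref{Lemma.Labeling} to this labeling and $M'$ then yields that $\varphi$ fails in $W$ under the $\Gamma$ forcing interpretation, so $\varphi$ is not a valid principle of $\Gamma$ forcing over $W$, completing the contrapositive. I expect the main obstacle to be bookkeeping rather than conceptual: the Labeling Lemma pins the designated initial world $w_0$ to the actual switch-pattern $\sigma^*$ realized in $W$, which I do not control, yet I must arrange $\varphi$ to fail precisely there. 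This is resolved by the full symmetry of single clusters together with the padding of the $N$-world countermodel up to exactly $2^k$ nodes.
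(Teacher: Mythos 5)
Your proof is correct and follows essentially the same route as the paper: contrapositive via the completeness of \theoryf{S5} for single-cluster frames (theorem \ref{Theorem.EquivalenceRelationsCompleteForS5}), a switch-pattern labeling $\Phi_\sigma=\bigwedge_{\sigma(i)=1}s_i\wedge\bigwedge_{\sigma(i)=0}\neg s_i$, padding to $2^k$ worlds, and lemma \ref{Lemma.Labeling}. The only (harmless) divergence is bookkeeping: the paper normalizes by assuming all switches are off in $W$ (replacing $s_i$ with $\neg s_i$ as needed) so that $W\satisfies\Phi_{w_0}$, whereas you keep the switches as given and instead exploit the symmetry of the cluster to place the failing valuation at the node $\sigma^*$ realized in $W$ --- the two devices do the same job.
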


\begin{proof}
Suppose that $\Gamma$ is a reflexive transitive forcing class and that $W$ is a model of set theory having arbitrarily large finite
independent families of switches. We may assume that the switches are all off in $W$. By theorem
\ref{Theorem.EquivalenceRelationsCompleteForS5}, any modal assertion not in \theoryf{S5} fails in a Kripke model $M$ built on a frame $F$
consisting of a single cluster of worlds $w_0$, $w_1,\ldots, w_{n-1}$, each accessible from all of them. By adding dummy copies of worlds
in this Kripke model (which does not affect modal truth), we may assume that $n=2^m$ for some natural number $m$. Let $s_0$, $s_1,\ldots
s_{m-1}$ be an independent family of $m$ switches over $W$. We will provide a $\Gamma$-labeling of this frame. For any $j<2^m$, let
$\Phi_{w_j}$ be the assertion that the overall pattern of truth values for the switches conforms with the $m$ binary digits of $j$. Thus,
we have associated each world $w_j$ of the frame of $M$ with a set-theoretic assertion $\Phi_{w_j}$. These assertions are mutually
exclusive, since different $j$ will lead to different incompatible patterns for the switches in $\Phi_j$, and exhaustive, since every model
must exhibit some pattern of switches. Since the switches are independent over $W$, any pattern of switches is $\Gamma$ necessarily
$\Gamma$ forceable, and so every $\Gamma$ extension $W[G]$ satisfies $\possible\Phi_{w_j}$ for all $j$. Since the switches are all off in
$W$, we have $W\satisfies\Phi_{w_0}$. Thus, we have verified the three labeling requirements, and so by lemma \ref{Lemma.Labeling}, there
is an assignment of the propositional variables $p$ to set-theoretic assertions $\psi_p$ such that
$(M,w_0)\satisfies\varphi(p_0,\ldots,p_k)$ if and only if $W\satisfies\varphi(\psi_{p_0},\ldots,\psi_{p_k})$. In particular, any statement
$\varphi$ that fails in $M$ at $w_0$ has a set-theoretic substitution instance $\varphi(\psi_{p_0},\ldots,\psi_{p_k})$ failing in $W$.
Since any statement outside \theoryf{S5} fails in such an $(M,w_0)$, it follows that the modal logic of $\Gamma$ forcing over $W$ is
contained within \theoryf{S5}, as desired.
\end{proof}

\begin{theorem}\label{Theorem.Ratchet+SwitchesImpliesS4.3}
If\/ $\Gamma$ is a reflexive transitive forcing class having arbitrarily long finite ratchets over a model of set theory $W$, mutually
independent with arbitrarily large finite families of switches, then the valid principles of $\Gamma$ forcing over $W$ are contained within
the modal theory \theoryf{S4.3}.
\end{theorem}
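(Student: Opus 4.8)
The plan is to mimic the proof of Theorem~\ref{Theorem.SwitchesImpliesS5}, but now using the completeness of \theoryf{S4.3} with respect to finite linear pre-orders (Theorem~\ref{Theorem.PreLinearOrdersCompleteForS4.3}) in place of single clusters. So suppose $\varphi$ is a modal assertion not derivable in \theoryf{S4.3}. By Theorem~\ref{Theorem.PreLinearOrdersCompleteForS4.3}, $\varphi$ fails at some world of some Kripke model $M$ whose frame $F$ is a finite linear pre-order. Such a frame is a finite linear sequence of clusters $C_0 \leq C_1 \leq \cdots \leq C_m$, where within each cluster all worlds are mutually accessible and the clusters are linearly ordered. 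By adding dummy worlds (which does not change modal truth), I may assume each cluster $C_i$ has size exactly $2^k$ for a common $k$, so that each cluster can be coordinatized by patterns of $k$ switches. The failure of $\varphi$ occurs at some world, and since the accessibility from the initial cluster reaches everything, I may arrange for $w_0$ to be an initial world in the bottom cluster $C_0$.

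The main step is to construct a $\Gamma$-labeling of $F$ over $W$. I would combine a ratchet of length $m$ with an independent family of $k$ switches, using the hypothesis that arbitrarily long finite ratchets exist mutually independently with arbitrarily large finite switch families. The ratchet value $i \in \{0,1,\ldots,m\}$ names the cluster $C_i$, and the switch pattern $j < 2^k$ names the position within that cluster. Concretely, for a world $w$ lying in cluster $C_i$ at switch-coordinate $j$, I set
$$\Phi_w \;=\; (r_i \wedge \neg r_{i+1}) \wedge (\text{the switch pattern of the } k \text{ switches equals the } k \text{ binary digits of } j),$$
with the convention that $r_0 = \top$ and $r_{m+1} = \bot$ so that ratchet value exactly $i$ is expressed by $r_i \wedge \neg r_{i+1}$. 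I would check the three labeling requirements of Definition~\ref{Definition.GammaLabeling}: mutual exclusivity and exhaustiveness follow because every $\Gamma$ extension of $W$ has a well-defined ratchet value in $\{0,\ldots,m\}$ and a well-defined switch pattern; the initiality condition $W \satisfies \Phi_{w_0}$ holds because I arrange the switches all off and the ratchet unpushed in $W$, so $W$ sits at ratchet value $0$ with the zero switch pattern.

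The subtle point, and the step I expect to be the main obstacle, is verifying requirement~(2): that from a model satisfying $\Phi_w$, the $\Gamma$-forceable labels $\Phi_u$ are exactly those with $w \leq_F u$. The unidirectional nature of the ratchet is exactly what makes the ordering between clusters work: from ratchet value $i$ one can force any value $\geq i$ but never decrease it, matching $C_i \leq C_{i'}$ iff $i \leq i'$. Within a fixed ratchet value, mutual independence of the switches with the ratchet guarantees that any switch pattern is necessarily forceable without disturbing the ratchet value, which gives the full mutual accessibility inside each cluster $C_i$. The care required here is that independence must be invoked \emph{necessarily}, i.e.\ it must persist in every $\Gamma$ extension, so that the labeling property holds uniformly at all worlds and not merely at $W$; this is precisely why the definitions of ratchet and independence are framed with $\necessary$ and why the notion of a pure button (for which becoming true is becoming necessarily true) is used in the ratchet. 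Once the labeling is established, lemma~\ref{Lemma.Labeling} immediately yields an assignment of propositional variables to set-theoretic assertions under which $\varphi$ fails in $W$ at $w_0$, so $\varphi$ is not a valid principle of $\Gamma$ forcing over $W$. Hence every valid principle lies in \theoryf{S4.3}.
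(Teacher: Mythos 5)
Your proposal is correct and follows essentially the same route as the paper's own proof: completeness of \theoryf{S4.3} for finite linear pre-orders, padding each cluster to size $2^k$, labeling the world in cluster $C_i$ at coordinate $j$ by (ratchet value exactly $i$) $\wedge$ (switch pattern $j$), and invoking lemma~\ref{Lemma.Labeling}. Your explicit conventions $r_0=\top$, $r_{m+1}=\bot$ and your remark that independence must hold necessarily (in all $\Gamma$ extensions) are exactly the right points of care, and if anything handle the cluster/ratchet-value indexing slightly more cleanly than the paper does.
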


\begin{proof}
Suppose that $\Gamma$ is a reflexive transitive forcing class with arbitrarily long finite ratchets, mutually independent of switches over
a model of set theory $W$. By theorem \ref{Theorem.PreLinearOrdersCompleteForS4.3}, any modal assertion not in \theoryf{S4.3} must fail in
a Kripke model $M$ built on a finite pre-linear order frame. Thus, by lemma \ref{Lemma.Labeling}, it suffices to provide a $\Gamma$
labeling of the frame of $M$. This frame consists of a finite increasing sequence of $n$ clusters of mutually accessible worlds. That is,
the $k^{\rm th}$ cluster consists of $n_k$ many worlds $w_0^k,w_1^k,\ldots,w_{n_k-1}^k$, and the frame order is simply $w_i^k\leq w_j^s$ if
and only if $k\leq s$. By adding dummy copies of worlds in each cluster, which does not affect truth in the Kripke model, we may assume
that all clusters have the same size and furthermore, that $n_k=2^m$ for some fixed natural number $m$.

Let $r_1,\ldots,r_n$ be a ratchet of length $n$ for $\Gamma$ over $W$, mutually independent from the $m$ many switches
$s_0,\ldots,s_{m-1}$. We may assume that all switches are off in $W$. Let $\bar r_k$ be the assertion that the ratchet value is exactly
$k$, so that $\bar r_0=\neg r_1$, $\bar r_k=r_k\wedge\neg r_{k+1}$ for $1\leq k<n$ and $\bar r_n=r_n$, and let $\bar s_j$ assert for
$j<2^m$ that the pattern of switches accords with the $m$ binary digits of $j$. We associate each world $w_j^k$, where $k<n$ and $j<2^m$,
with the assertion $\Phi_{w_j^k}=\bar r_k\wedge \bar s_j$, which asserts that the ratchet value is exactly $k$ and the switches exhibit
pattern $j$. Since the ratchet values cannot go down, any pattern of switches is possible without increasing the ratchet value, and any
value is possible, it follows that the $\Gamma$ possibility of $\Phi_w$ corresponds exactly with the order in the frame. That is, whenever
$W[G]$ satisfies $\Phi_{w_j^k}$, then it satisfies $\possible_\Gamma\Phi_{w_i^s}$ if and only if $w_j^k\leq w_i^s$, which is to say, if and
only if $k\leq s$. Also, since the ratchet value is $0$ in $W$ itself and the switches are off, we have that $W\satisfies\Phi_{w_0^0}$, and
so we have provided a $\Gamma$-labeling of the frame of $M$. It follows by lemma \ref{Lemma.Labeling} that there is an assignment of the
propositional variables $p$ to set-theoretic assertions $\psi_p$ such that for any modal assertion $\varphi$ we have
$(M,w_0)\satisfies\varphi(p_0,\ldots,p_t)$ if and only if $W\satisfies\varphi(\psi_{p_0},\ldots,\psi_{p_t})$. In particular, by theorem
\ref{Theorem.PreLinearOrdersCompleteForS4.3}, any statement outside \theoryf{S4.3} will fail in such an $M$, and consequently will have a
failing substitution instance in $W$ under the $\Gamma$ forcing interpretation. Thus, the valid principles of $\Gamma$ forcing over $W$ are
contained within \theoryf{S4.3}, as desired.
\end{proof}

It turns out that in most of the set-theoretic situations where we are able to build ratchets, we are also able to build a long ratchet,
and in this case the next theorem allows us to simplify things by avoiding the need to consider switches.

\begin{theorem}\label{Theorem.LongRatchetImpliesS4.3}
If\/ $\Gamma$ is a reflexive transitive forcing class having a long ratchet over a model of set theory $W$, then the valid principles of
$\Gamma$ forcing over $W$ are contained within the modal theory \theoryf{S4.3}.
\end{theorem}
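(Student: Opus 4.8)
The plan is to reduce the statement to theorem \ref{Theorem.Ratchet+SwitchesImpliesS4.3} by showing that a single long ratchet already provides, over $W$ and for every pair of natural numbers $n$ and $m$, a finite ratchet of length $n$ that is mutually independent with a family of $m$ switches. Since theorem \ref{Theorem.Ratchet+SwitchesImpliesS4.3} then concludes that the valid principles of $\Gamma$ forcing over $W$ are contained within \theoryf{S4.3}, this suffices. Throughout I would work with the ratchet value: by the long ratchet hypothesis no $\Gamma$ extension satisfies all $r_\alpha$, so in every $\Gamma$ extension $W[G]$ the statements $r_\alpha$ hold exactly on a bounded initial segment of the ordinals, giving a well-defined ordinal ratchet value, and from any extension one may pass to a further $\Gamma$ extension realizing any prescribed higher value. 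After reindexing I may assume the ratchet is continuous, so that the value behaves as a clean ordinal cut.

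First I would fix $\gamma_0$, the ratchet value of $W$ itself, and stratify the ordinals above $\gamma_0$ into $\omega$-blocks by setting $\delta_k=\gamma_0+\omega\cdot k$. The finite ratchet of length $n$ is then defined by $r'_k:=r_{\delta_k}$, asserting that the ratchet value has reached the $k$-th limit threshold. These are unpushed in $W$ (since $\gamma_0<\delta_1$), each is a pure button necessarily implying the previous, and each can be forced up to its threshold without crossing the next, so that $r'_1,\dots,r'_n$ is a ratchet of length $n$, with finite ratchet value exactly $k$ precisely when the ratchet value lies in the block $[\delta_k,\delta_{k+1})$.

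For the switches, the key device is to read the \emph{finite part} of the ratchet value. Writing any ordinal uniquely as $\lambda+j$ with $\lambda$ a limit (or $0$) and $j<\omega$, I would define $s_i$ to assert that the $i$-th binary digit of $j\bmod 2^m$ is $1$. The point is that although the ratchet value can only increase, from any extension one can always bump it up by a finite amount to the next value whose finite part has any desired residue modulo $2^m$; hence every pattern of the $m$ switches is necessarily $\Gamma$ forceable, so each $s_i$ is a genuine switch and the family is independent. Moreover such a finite bump stays within the current block $[\delta_k,\delta_{k+1})$ and therefore leaves the finite ratchet value unchanged, while conversely a push across a threshold can be arranged to land on any prescribed residue; thus the ratchet and the switches are mutually independent.

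I expect the main obstacle to be exactly this manufacture of \emph{reversible} switches out of an inherently \emph{irreversible} ratchet: the ordinal ratchet value never decreases, yet a switch must be both turn-on-able and turn-off-able from every extension. The residue trick resolves this, since the residue of the finite part modulo $2^m$ can be driven to any target value by moving strictly upward, which yields the necessary possibility of every switch pattern while the coarse, limit-threshold ratchet level is left untouched. Once this mutually independent ratchet-plus-switches family has been produced for every $n$ and $m$, theorem \ref{Theorem.Ratchet+SwitchesImpliesS4.3} delivers the conclusion.
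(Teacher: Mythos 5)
Your proposal is correct and is essentially the paper's own argument: both reduce to theorem \ref{Theorem.Ratchet+SwitchesImpliesS4.3} by partitioning the ordinals into blocks of length $\omega$, using the block index ($r_{\omega\cdot\alpha}$, your $r_{\delta_k}$) as the finite ratchet and the binary digits of the finite part of the ratchet value as the switches, with mutual independence coming from the fact that any higher value is precisely attainable so one can bump the value within a block or land on any residue when crossing a threshold. The offset by $\gamma_0$ is harmless but unnecessary, since a long ratchet over $W$ consists of unpushed buttons and hence $W$ has ratchet value $0$.
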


\begin{proof}
Suppose that $\<r_\alpha\st 0<\alpha<\ORD>$ is a long ratchet over $W$, that is, a uniform ratchet control of length $\ORD$, such that no
$\Gamma$ extension satisfies every $r_\alpha$. We may assume the ratchet is continuous. It suffices by theorem
\ref{Theorem.Ratchet+SwitchesImpliesS4.3} to produce arbitrarily long finite ratchets independent from arbitrarily large finite families of
switches. To do this, we shall divide the ordinals into blocks of length $\omega$, and think of the position within one such a block as
determining a switch pattern and the choice of block itself as another ratchet. Specifically, every ordinal can be uniquely expressed in
the form $\omega\cdot\alpha+k$, where $k<\omega$, and we think of this ordinal as being the $k$th element in the $\alpha$th block. Let
$s_i$ be the statement that if the current ratchet value is exactly $\omega\cdot\alpha+k$, then the $i$th binary bit of $k$ is $1$. Let
$v_\alpha$ be the assertion $r_{\omega\cdot\alpha}$, which expresses that the current ratchet value is in the $\alpha$th block of
ordinals of length $\omega$ or higher. Since we may freely increase the ratchet value to any higher value, we may increase the value of $k$
while staying in the same block of ordinals, and so the $v_\alpha$ form themselves a ratchet, mutually independent of the switches $s_i$.
Thus, by theorem \ref{Theorem.Ratchet+SwitchesImpliesS4.3}, the valid principles of $\Gamma$ forcing over $W$ are contained within
\theoryf{S4.3}.
\end{proof}

\begin{theorem}\label{Theorem.ButtonsSwitchesImpliesS4.2}
If\/ $\Gamma$ is a reflexive transitive forcing class and there are arbitrarily large finite families of mutually independent buttons and
switches over a model of set theory $W$, then the valid principles of $\Gamma$ forcing over $W$ are contained within \theoryf{S4.2}.
\end{theorem}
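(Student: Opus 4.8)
The plan is to mirror the strategy of theorems \ref{Theorem.SwitchesImpliesS5} and \ref{Theorem.Ratchet+SwitchesImpliesS4.3}, now invoking the completeness of \theoryf{S4.2} with respect to finite pre-Boolean algebras (theorem \ref{Theorem.CanopiedTreesCompleteForS4.2}). Any modal assertion outside \theoryf{S4.2} fails at the initial world of some Kripke model $M$ whose frame $F$ is a finite pre-Boolean algebra, so by lemma \ref{Lemma.Labeling} it suffices to exhibit a $\Gamma$-labeling of $F$ over $W$. Such a frame is obtained from a finite Boolean algebra---the powerset of an $n$-element set ordered by inclusion---by replacing each node with a cluster of mutually accessible worlds. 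As in the earlier proofs, I would first pad the frame with dummy copies of worlds, which does not affect modal truth, so as to assume every cluster has the same size $2^m$ for some fixed $m$.

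The labeling will encode the Boolean-algebra coordinate with buttons and the cluster coordinate with switches. Fix an independent family consisting of $n$ buttons and $m$ switches over $W$; passing to the associated pure buttons $\necessary b_i$ (which preserves independence), I may assume the buttons are pure, and I may assume all buttons are unpushed and all switches off in $W$. For a subset $A\of\{0,\ldots,n-1\}$ let $\beta_A$ assert that exactly the buttons $b_i$ with $i\in A$ are pushed, and for $j<2^m$ let $\bar s_j$ assert that the switch pattern accords with the $m$ binary digits of $j$. To the world indexed by the Boolean node $A$ and cluster position $j$ I assign $\Phi=\beta_A\wedge\bar s_j$, asserting that precisely the buttons in $A$ are pushed and the switches exhibit pattern $j$.

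The verification of the three labeling requirements is where the argument earns its keep. Mutual exclusivity and exhaustiveness follow because every $\Gamma$ extension exhibits exactly one button pattern (each pure button is definitely pushed or unpushed) and exactly one switch pattern. The initial-world condition holds because $W\satisfies\beta_\emptyset\wedge\bar s_0$, matching the bottom node and the zero switch pattern. The crux is property (2), that $\Gamma$-possibility must track the frame order: from an extension satisfying $\beta_A\wedge\bar s_j$ one can reach an extension satisfying $\beta_B\wedge\bar s_i$ precisely when $A\of B$, since buttons can only be pushed and never unpushed (monotonicity, which also keeps pushed buttons pushed under transitivity), while mutual independence permits pushing exactly the buttons in $B\setminus A$ and simultaneously realizing any switch pattern $i$ without disturbing the remaining controls. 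This is exactly the inclusion order $A\of B$ of the underlying Boolean algebra, hence the frame order of $F$.

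The main obstacle I anticipate is precisely this match between button-monotonicity and the inclusion order of the Boolean algebra: one must check that the pure buttons give a faithful copy of the full powerset---that every subset $A$ is attainable and that no further $\Gamma$ forcing can shrink the set of pushed buttons---and that the strength of mutual independence suffices to realize every target configuration $(B,i)$ with $A\of B$ in a single $\Gamma$ extension. Once this is secured, lemma \ref{Lemma.Labeling} transfers the failure of the offending modal assertion from $(M,w_0)$ to a set-theoretic substitution instance failing in $W$, and theorem \ref{Theorem.CanopiedTreesCompleteForS4.2} yields that the valid principles of $\Gamma$ forcing over $W$ are contained within \theoryf{S4.2}, as desired.
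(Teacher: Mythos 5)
Your proposal is correct and follows essentially the same route as the paper's own proof: pad the clusters of the finite pre-Boolean algebra frame to size $2^m$, label the world at Boolean node $a$ and cluster position $j$ with a button-pattern-plus-switch-pattern assertion, verify the three $\Gamma$-labeling conditions using purity (monotonicity) and mutual independence of the controls, and conclude via lemma \ref{Lemma.Labeling} and theorem \ref{Theorem.CanopiedTreesCompleteForS4.2}. Your explicit formulation $\beta_A$ (``exactly the buttons in $A$ are pushed'') is in fact the careful reading needed for mutual exclusivity, which the paper's abbreviated label $(\bigwedge_{i\in a}b_i)\wedge\bar s_j$ leaves implicit.
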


\begin{proof}
This was the main application of this technique in \cite{HamkinsLoewe2008:TheModalLogicOfForcing}, applied to the class of all forcing. The
point is that when you have mutually independent buttons and switches, you can label any finite pre-Boolean algebra.

Suppose that $\Gamma$ is a reflexive transitive forcing class having arbitrarily large finite families of mutually independent buttons and
switches over a model of set theory $W$. Suppose that $M$ is a Kripke model whose frame $F$ is a finite pre-Boolean algebra. Thus, the
quotient of $F$ by the equivalence relation $w\equiv v\iff w\leq v\leq w$ is a finite Boolean algebra $B$, which must be a power set $P(A)$
of a finite set $A$. Each element $a\in B$ is associated with a cluster of worlds $w^a_0,\ldots,w^a_{k_a}$. By adding dummy worlds to each
cluster, we may assume that all the clusters have size $k_a=2^m$ for some fixed $m$. Suppose that $A$ has size $n$, so that there are $n$
atoms in the Boolean algebra. Thus, the frame $F$ can be thought of as worlds $w^a_j$, where $a\of A$ and $j<2^m$, with the order
$w^a_j\leq w^c_i$ if and only if $a\of c$.

Associate each element $i\in A$ with a pure button $b_i$, such that these form a mutually independent family with $m$ many switches
$s_0,\ldots,s_{m-1}$. For $j<2^m$, let $\bar s_j$ be the assertion that the pattern of switches corresponds to the binary digits of $j$. We
label the world $w^a_j$ with the assertion $\Phi_{w^a_j}=(\bigwedge_{i\in a}b_i)\wedge\bar s_j$. If $W[G]$ satisfies $\Phi_{w^a_j}$, then
by the mutual independence of the buttons and switches, we conclude that $W[G]$ satisfies $\possible\Phi_{w^c_r}$ if and only if $a\of c$,
since any prescribed larger collection of buttons can be pushed and the switches can be set to any pattern without pushing any additional
buttons. Also, since none of the buttons is pushed and all the switches are off in $W$, we have $W\satisfies\Phi_{w^\varnothing_0}$. Thus, we
have provided a $\Gamma$ labeling of the frame for $W$.

By lemma \ref{Lemma.Labeling}, therefore, there is an assignment of the propositional variables $p\mapsto\psi_p$ such that
$(M,w^\varnothing_0)\satisfies\varphi(p_0,\ldots,p_k)$ if and only if $W\satisfies\varphi(\psi_{p_0},\ldots,\psi_{p_k})$. In particular, any
$\varphi$ failing at $(M,w^\varnothing_0)$ will have a substitutions instance failing in $W$. By theorem
\ref{Theorem.CanopiedTreesCompleteForS4.2}, any modal assertion outside \theoryf{S4.2} fails in such a Kripke model $(M,w^\varnothing_0)$,
and so the valid principles of $\Gamma$ forcing over $W$ will be contained in \theoryf{S4.2}, as desired.
\end{proof}

\begin{corollary}\label{Corollary.ExactlyS4.2}
If\/ $\Gamma$ is a reflexive transitive directed forcing class and there are arbitrarily large finite families of mutually independent
buttons and switches over a model of set theory $W$, then the valid principles of $\Gamma$ forcing over $W$ are exactly \theoryf{S4.2}.
\end{corollary}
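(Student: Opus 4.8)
The plan is to prove Corollary \ref{Corollary.ExactlyS4.2} by establishing the two inclusions separately: soundness (\theoryf{S4.2} is valid for $\Gamma$) and completeness (the valid principles are contained in \theoryf{S4.2}). The completeness direction is already in hand, since it is exactly the content of Theorem \ref{Theorem.ButtonsSwitchesImpliesS4.2}: the hypothesis that $\Gamma$ is a reflexive transitive forcing class with arbitrarily large finite families of mutually independent buttons and switches over $W$ gives that the valid principles of $\Gamma$ forcing over $W$ are contained within \theoryf{S4.2}. So the only genuinely new work is the reverse containment, namely that every assertion of \theoryf{S4.2} is a valid principle of $\Gamma$ forcing over $W$.

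For that soundness direction, I would invoke Theorem \ref{Theorem.EasyValidities}(2), which states that \theoryf{S4.2} is valid for any reflexive transitive \emph{directed} forcing class. This is precisely why the corollary strengthens the hypothesis of Theorem \ref{Theorem.ButtonsSwitchesImpliesS4.2} by adding the assumption that $\Gamma$ is directed: reflexivity yields axiom \axiomf{T}, transitivity yields axiom \axiomf{4} (and the validity of \axiomf{K} and \axiomf{Dual} is automatic for any class), while directedness is exactly what is needed for the Geach axiom \axiomf{.2}. As noted in the proof of Theorem \ref{Theorem.EasyValidities}, the subtlety is that validity must be preserved under necessitation, but the directedness property is required to hold in all models of set theory, so each axiom remains valid in every $\Gamma$ extension, and hence \theoryf{S4.2}, being the closure of $\axiomf{K}+\axiomf{T}+\axiomf{4}+\axiomf{.2}$ under modus ponens and necessitation, is valid for $\Gamma$ forcing over $W$.

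Combining the two directions finishes the argument: by Theorem \ref{Theorem.EasyValidities}(2) every assertion of \theoryf{S4.2} is a valid principle of $\Gamma$ forcing over $W$, and by Theorem \ref{Theorem.ButtonsSwitchesImpliesS4.2} every valid principle of $\Gamma$ forcing over $W$ lies in \theoryf{S4.2}. Hence the valid principles of $\Gamma$ forcing over $W$ are exactly \theoryf{S4.2}. I do not anticipate any real obstacle here, since both inclusions are immediate consequences of results already established; the proof is essentially a bookkeeping combination. The only point deserving a word of care is confirming that the directedness hypothesis of the corollary is genuinely stronger than what Theorem \ref{Theorem.ButtonsSwitchesImpliesS4.2} assumed and is exactly the ingredient that upgrades the one-sided containment to an equality, which the argument above makes explicit.
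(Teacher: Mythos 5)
Your proposal is correct and matches the paper's own proof exactly: the paper likewise obtains the lower bound from theorem \ref{Theorem.EasyValidities} (using directedness for axiom \axiomf{.2}) and the upper bound from theorem \ref{Theorem.ButtonsSwitchesImpliesS4.2}. Your additional remarks about necessitation and the role of the directedness hypothesis are accurate glosses on material already present in the proof of theorem \ref{Theorem.EasyValidities}.
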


\begin{proof}
The lower bound is provided by theorem \ref{Theorem.EasyValidities}, and the upper bound by theorem
\ref{Theorem.ButtonsSwitchesImpliesS4.2}.
\end{proof}

Let us say that a model $W$ of set theory admits a {\df uniform family of $\ORD$ many independent buttons} for $\Gamma$ forcing, if there
is a formula $\varphi$ in the language of set theory such that the assertions $b_\alpha=\varphi(\alpha)$, for each ordinal $\alpha$ in $W$,
form an independent family of buttons for $\Gamma$ forcing, and furthermore any forcing extension of $W$ pushes at most boundedly many of
the buttons. With such a large collection of buttons, as in theorem \ref{Theorem.LongRatchetImpliesS4.3}, we may avoid the need in theorem
\ref{Theorem.ButtonsSwitchesImpliesS4.2} to also have independent switches.

\begin{theorem}\label{Theorem.IndependentOrdButtonsImpliesS4.2}
If\/ $\Gamma$ is a reflexive transitive forcing class and $W$ is a model of set theory having a uniform family of $\ORD$ many independent
buttons, then the valid principles of $\Gamma$ forcing over $W$ are contained within \theoryf{S4.2}.
\end{theorem}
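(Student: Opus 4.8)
The plan is to reduce Theorem~\ref{Theorem.IndependentOrdButtonsImpliesS4.2} to the already-established Theorem~\ref{Theorem.ButtonsSwitchesImpliesS4.2}, exactly as Theorem~\ref{Theorem.LongRatchetImpliesS4.3} was reduced to Theorem~\ref{Theorem.Ratchet+SwitchesImpliesS4.3}. The goal is to manufacture, from a single uniform family of $\ORD$ many independent buttons $b_\alpha = \varphi(\alpha)$, arbitrarily large finite families of \emph{mutually independent buttons and switches}, and then to quote Theorem~\ref{Theorem.ButtonsSwitchesImpliesS4.2} verbatim. The idea is that a plentiful supply of buttons can be spent in two different ways: some of them we keep as genuine buttons, and the rest we bundle together in blocks to simulate switches (since a switch is, morally, a pair of weak buttons that can be toggled, and here we have a limitless reserve of independent buttons to spend on simulating toggling).

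First I would fix the target: to apply Theorem~\ref{Theorem.ButtonsSwitchesImpliesS4.2} it suffices to produce, for each pair $(n,m)$, a family of $n$ mutually independent buttons together with $m$ mutually independent switches, all mutually independent of each other. Partition the ordinals of $W$ (equivalently, partition the index formula $\varphi$) into disjoint definable pieces: reserve the buttons $b_0,\ldots,b_{n-1}$ to serve directly as the $n$ buttons, and use the remaining buttons to build the $m$ switches. For a single switch, the plan is to use two buttons, say $b$ and $b'$, drawn from the reserve, and define a switch $s$ to assert something like ``more of the $b$-buttons are pushed than of the $b'$-buttons,'' or more cleanly, to dedicate an $\omega$-block of buttons to each switch and let the parity of the number pushed give the switch value. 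Because any forcing extension pushes only boundedly many of the $b_\alpha$, within any extension one can always push one more button from a given block, flipping that block's parity, without disturbing the finitely many buttons in all the \emph{other} blocks; this is exactly the defining behavior of a switch that is necessarily possibly on and necessarily possibly off, mutually independent of everything else.

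The cleanest implementation mirrors the block construction in Theorem~\ref{Theorem.LongRatchetImpliesS4.3}. I would index the reserve buttons by pairs, writing the $m$ switch-buttons as $\{b_{j,\ell} : j<m,\ \ell<\omega\}$, and define the $j$th switch $s_j$ to be the assertion that an odd number of the buttons $b_{j,\ell}$ are pushed. The genuine buttons $b_0,\ldots,b_{n-1}$ stay as themselves. Since the buttons are independent and only boundedly many are ever pushed, in any $\Gamma$ extension one can push exactly one fresh button $b_{j,\ell}$ from the $j$th block to toggle $s_j$, set each $s_j$ to any prescribed value, and push any prescribed sub-collection of the $n$ designated buttons, all without interfering with the remaining buttons or switches. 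This yields mutual independence of the combined family, and since $n$ and $m$ were arbitrary, Theorem~\ref{Theorem.ButtonsSwitchesImpliesS4.2} applies and bounds the valid principles of $\Gamma$ forcing over $W$ within \theoryf{S4.2}.

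The main obstacle I anticipate is verifying that the parity-defined $s_j$ are genuine switches and genuinely independent, rather than smuggling in unintended dependencies. Specifically, one must confirm that each $s_j$ is necessarily possibly true and necessarily possibly false---this needs the boundedness hypothesis (``any forcing extension pushes at most boundedly many of the buttons''), which guarantees that in every extension there remains an unpushed button in block $j$ available to flip the parity, and also a way to flip it back by a separate button, so that $s_j$ is not a one-directional button but a true switch. One must also check that toggling $s_j$ by pushing a single $b_{j,\ell}$ leaves $s_{j'}$ for $j'\neq j$ and the designated buttons $b_0,\ldots,b_{n-1}$ untouched, which follows from the independence of the underlying uniform button family. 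These verifications are routine given the hypotheses, so the substance of the proof is the block-partition bookkeeping, after which the result is immediate from Theorem~\ref{Theorem.ButtonsSwitchesImpliesS4.2}.
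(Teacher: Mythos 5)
There is a genuine gap at the heart of your switch construction. Your verification that each parity switch $s_j$ can always be toggled rests on the claim that the boundedness hypothesis ``guarantees that in every extension there remains an unpushed button in block $j$.'' It does not: the hypothesis says only that the set of \emph{indices} of pushed buttons is bounded in $\ORD$, and a bounded set of ordinals can perfectly well be infinite. Since each of your blocks $\set{b_{j,\ell}\st \ell<\omega}$ occupies a fixed bounded set of indices, nothing prevents a single $\Gamma$ extension from pushing every button in block $j$ at once; independence does not rule this out either, since it asserts only that each button can be pushed without affecting the others, not that extensions push only finitely many buttons (compare the stationary-set buttons $b_n^*$ of \S\,\ref{Section.ControlStatements}, all of which are pushed simultaneously by collapsing $\omega_1$). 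In an extension where all of block $j$ is pushed, the assertion ``an odd number of the $b_{j,\ell}$ are pushed'' is false, and since buttons cannot be unpushed and no unpushed button remains in the block, it stays false in every further extension. Thus $s_j$ is not $\Gamma$ necessarily possible, so it is not a switch, and the labeling argument of theorem \ref{Theorem.ButtonsSwitchesImpliesS4.2}---which requires the switch pattern to be freely settable over \emph{every} $\Gamma$ extension $W[G]$---breaks down.

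The paper avoids exactly this trap by spending the surplus buttons on a long ratchet rather than on fixed blocks: keep the first $\omega$ many buttons as buttons, and let $r_\alpha$ assert that some button $b_\beta$ with $\beta\geq\omega+\alpha$ has been pushed (that is, track the supremum of the pushed indices beyond $\omega$). Each $r_\alpha$ is an unpushed pure button, each implies the previous, each can be made true without the next by pushing the single button $b_{\omega+\alpha}$ (using independence), and by the boundedness hypothesis no $\Gamma$ extension satisfies all of them---so this is a long ratchet, and theorem \ref{Theorem.LongRatchetImpliesS4.3} then manufactures the needed independent switches from it. The crucial difference from your construction is that operating those switches only ever requires \emph{increasing} the ratchet value, and unpushed buttons exist at arbitrarily high indices in every extension, so the supply needed for toggling can never be exhausted; your fixed-block parity scheme can be salvaged only by replacing it with such a movable, supremum-based scheme, which is precisely the paper's proof.
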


\begin{proof}
The idea is simply to keep the first $\omega$ many buttons as buttons and to use the rest of the buttons to form a long ratchet by looking
at the supremum of the pushed buttons beyond $\omega$. This ratchet gives rise to independent switches as in theorem
\ref{Theorem.LongRatchetImpliesS4.3}, and so one has a family of independent buttons and switches for $\Gamma$ forcing over $W$. It follows
by theorem \ref{Theorem.ButtonsSwitchesImpliesS4.2} that the valid principles of $\Gamma$ forcing over $W$ is contained in \theoryf{S4.2}.
\end{proof}

\begin{theorem}\label{Theorem.WeakButtonsImpliestBA}
If\/ $\Gamma$ is a reflexive transitive forcing class and there are arbitrarily large finite families of strongly independent weak buttons
and switches over a model of set theory $W$, then the valid principles of $\Gamma$ forcing over $W$ are contained within \theoryf{S4.tBA}.
\end{theorem}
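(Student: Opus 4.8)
The plan is to mirror the proof of theorem \ref{Theorem.ButtonsSwitchesImpliesS4.2}, replacing ordinary buttons by strongly independent weak buttons and replacing the target frames of finite pre-Boolean algebras by finite \emph{topless} pre-Boolean algebras. Since \theoryf{S4.tBA} is by definition complete with respect to the class of finite topless pre-Boolean algebras, any modal assertion outside \theoryf{S4.tBA} fails at some world of some Kripke model $M$ whose frame $F$ is a finite topless pre-Boolean algebra. By lemma \ref{Lemma.Labeling}, it therefore suffices to produce a $\Gamma$-labeling of an arbitrary such frame $F$ for $W$. The guiding intuition is that the \emph{missing top element} of a topless Boolean algebra corresponds exactly to the defining feature of strong independence, namely that no $\Gamma$ extension can have all the weak buttons pushed at once.

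First I would unpack the frame. Its quotient is a topless Boolean algebra, that is, the collection of proper subsets $a\subsetneq A$ of a finite set $A$ with $|A|=n$, ordered by inclusion, with each such $a$ blown up into a cluster of mutually accessible worlds. After padding with dummy worlds (which does not change modal truth), I may assume every cluster has size $2^m$, so that $F$ consists of worlds $w^a_j$ for $a\subsetneq A$ and $j<2^m$, with $w^a_j\leq w^c_i$ iff $a\of c$. Since $\Gamma$ is reflexive and transitive, \theoryf{S4} is valid, so I may replace each weak button $b_i$ by its pure version $\necessary b_i$; this remains a weak button and preserves strong independence, and for a pure button being true, being pushed, and being necessary all coincide. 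As we have arbitrarily large families, fix strongly independent pure weak buttons $b_0,\dots,b_{n-1}$ together with $m$ mutually independent switches $s_0,\dots,s_{m-1}$, which we may assume are unpushed and off in $W$. Writing $\bar s_j$ for the assertion that the switches display the binary pattern $j$, I label
$$\Phi_{w^a_j}=\Bigl(\bigwedge_{i\in a}b_i\Bigr)\wedge\Bigl(\bigwedge_{i\in A\setminus a}\neg b_i\Bigr)\wedge\bar s_j,$$
asserting that exactly the buttons indexed by $a$ are pushed and the switches show pattern $j$.

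Then I would verify the three labeling conditions of definition \ref{Definition.GammaLabeling}. For (1), every $\Gamma$ extension pushes some set of buttons and displays some switch pattern; because the family is strongly independent, the pushed set is a \emph{proper} subset $a\subsetneq A$, so $W[G]$ satisfies exactly one $\Phi_{w^a_j}$, and each such corresponds to an actual node of the topless frame. For (3), all buttons are unpushed and switches off in $W$, so $W\satisfies\Phi_{w^\varnothing_0}$. The heart is (2): if $W[G]\satisfies\Phi_{w^a_j}$, then a further $\Gamma$ extension can realize pattern $w^c_i$ precisely when $a\of c$ --- pushed buttons stay pushed, so $a\of c$ is necessary, while strong independence lets us push exactly the additional buttons of $c\setminus a$ and set the switches to pattern $i$ without disturbing the others, possible because $c\subsetneq A$ means we are never asked to push them all. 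Hence $\possible\Phi_{w^c_i}$ iff $w^a_j\leq_F w^c_i$. With the labeling in hand, lemma \ref{Lemma.Labeling} transfers the failure of the offending assertion from $(M,w^\varnothing_0)$ into $W$ under the $\Gamma$-forcing interpretation, so every assertion outside \theoryf{S4.tBA} has a failing substitution instance over $W$, giving the claimed containment.

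The main obstacle is getting condition (2) exactly right at the co-atoms. The clause in the definition of strong independence that an additional button may be pushed only so long as this would not push them all is precisely what makes the $(n{-}1)$-element patterns maximal in the reachability order, mirroring the deleted top of the topless algebra; I must confirm both that every proper-subset pattern is genuinely reachable from $\varnothing$ by pushing buttons one at a time, never completing the full set, and that the full pattern $A$ is never reachable, so that the labeling lands in the topless frame rather than the full Boolean algebra. Checking that strong independence survives the passage to pure buttons and the padding with dummy worlds is routine, as is the observation that arbitrarily large families supply enough buttons and switches for whatever finite $F$ arises.
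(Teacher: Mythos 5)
Your proposal is correct and takes essentially the same route as the paper, whose own proof is just a two-sentence remark that the argument of theorem \ref{Theorem.ButtonsSwitchesImpliesS4.2} goes through with topless pre-Boolean algebras in place of pre-Boolean algebras, the missing top corresponding to the impossibility of pushing all the weak buttons. You simply carry out that sketch in full detail, and your labeling $\Phi_{w^a_j}$ with the explicit negative conjuncts $\bigwedge_{i\in A\setminus a}\neg b_i$ is in fact a more careful rendering of condition (1) of definition \ref{Definition.GammaLabeling} than the paper's own notation in theorem \ref{Theorem.ButtonsSwitchesImpliesS4.2}.
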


\begin{proof}
This argument proceeds almost identically to the proof of theorem \ref{Theorem.ButtonsSwitchesImpliesS4.2}, except that here the
pre-Boolean algebras will be topless. The impossibility of pushing all the weak buttons corresponds exactly to the absence of the top
element in the Boolean algebra, so the corresponding labeling works here for the topless pre-Boolean algebra.
\end{proof}

Let us conclude this section with an aside, briefly correcting a flaw in our paper \cite{HamkinsLoewe2008:TheModalLogicOfForcing} concerning the existence of independent buttons for forcing over $\LL$. These families exist, as explained in the following, but Jakob Rittberg noticed that the particular buttons we had proposed in the proof of \cite[lemma\,6.1]{HamkinsLoewe2008:TheModalLogicOfForcing} were problematic. Specifically, we had claimed there that the buttons $b_n$ stating that ``$\omega_n^\mathrm{L}$ is not a cardinal'' form a
family of independent buttons for forcing over $\LL$. Although these are indeed buttons and one can push them in any finite pattern by
forcing over $\LL$, for true independence one must be able to continue to control the buttons independently also in all the forcing
extensions of $\LL$. And while any two of the buttons can be controlled independently in this way, what Rittberg had noticed was that if
$\LL[G]$ is a generic extension of $\LL$ in which $2^\omega$ is at least $\aleph_3$ and cardinals have not been collapsed, then the usual
collapse forcing of $\aleph_2^\LL$ to $\aleph_1^\LL$ will also collapse $\aleph_3^\LL$. So it isn't clear how to push $b_2$ over such a
model while pushing neither $b_1$ nor $b_3$. It appears to be a subtle question in forcing whether it is always possible to do so. So we do
not actually know whether the original buttons of \cite[lemma\,6.1]{HamkinsLoewe2008:TheModalLogicOfForcing} are independent over $\LL$ or
not.

Once this problem came to light, a variety of other families of independent buttons were provided. Indeed, we had already in the original article provided an alternative correct family of independent buttons and switches just after
\cite[theorem\,29]{HamkinsLoewe2008:TheModalLogicOfForcing}, and we shall presently explain these again more fully below. Rittberg himself
provided an independent family of buttons in his Master's thesis \cite[\S\,2.4.2]{Rittberg2010:TheModalLogicOfForcing} with a full proof of
their independence over $\LL$:
$$b_n^\mathrm{R} := \mbox{at least one of $\aleph_{3n}^\LL$, $\aleph_{3n+1}^\LL$, and $\aleph_{3n+2}^\LL$ is not a cardinal, or }|\wp(\aleph_{3n}^\LL)|>|\aleph_{3n+1}^\LL|.$$
Friedman, Fuchino and Sakai \cite[\S\,5]{FriedmanFuchinoSakai:OnTheSetGenericMultiverse} have another family of independent buttons,
namely, if $T_n^\LL$ denotes the $\LL$-least $\aleph_n$-Suslin tree, then any finite subfamily of the assertions
$$b_n^{\mathrm{FSS},1} := \mbox{$\aleph_n^\LL$ is not a cardinal or $T_n^\LL$ is not an $\aleph_n$-Suslin tree}$$
is an independent family of buttons over $\LL$. Furthermore, they show that the simpler statements
$$b_n^{\mathrm{FSS},2} := \mbox{there is an injection from $\aleph_{n+2}^\LL$ to $\wp(\aleph_n^\LL)$}$$
form an infinite family of independent buttons over $\LL$. For the
sake of completeness, let us give a full proof here of the independence of the alternative buttons from our original paper:
 $$b_n^*:=\mbox{$S_n$ is no longer stationary,}$$
where $\omega_1^\LL=\bigsqcup_{n\in\omega} S_n$ is the $\LL$-least partition of $\omega_1^\LL$ into $\omega$ many disjoint stationary sets
(cf.\ \cite[theorem\,29]{HamkinsLoewe2008:TheModalLogicOfForcing}). The independence of these buttons is based on our ability to control whether
a set or its complement remains stationary in a forcing extension.

\begin{lemma}[{Baumgartner, Harrington, Kleinberg \cite[thm\,23.8, ex\,23.6]{Jech:SetTheory3rdEdition}}]\label{Lemma.ShootingClubs}
If $S\of\omega_1$ is stationary, then the club-shooting forcing for $S$, consisting of the closed bounded subsets of $S$ ordered by
end-extension, adds a club subset of $S$, is countably distributive, and preserves all stationary subsets of $S$.
\end{lemma}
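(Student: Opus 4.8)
The plan is to treat the three claims in turn, handling the generic club by a direct density argument and then handling distributivity and stationary-set preservation together by a single elementary-submodel (fusion) argument whose decisive feature is a stationarity-driven choice of the limit ordinal. First I would analyze the generic object. Write $\P$ for the club-shooting forcing, ordered so that $q\leq p$ iff $q\intersect(\sup p+1)=p$ (end-extension), and let $C=\Union G$ for $G$ generic. Any two conditions of $G$ are compatible, hence comparable under end-extension, so the conditions in $G$ are linearly ordered and satisfy $C\intersect(\sup p+1)=p$ for every $p\in G$. Unboundedness of $C$ is immediate: given any $p$ and any $\alpha$, since $S$ is unbounded there is $\beta\in S$ with $\beta>\max(\sup p,\alpha)$, and $p\union\singleton{\beta}$ is a condition extending $p$, so $\set{p\st\sup p>\alpha}$ is dense. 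Closure of $C$ then follows from $C\intersect(\sup p+1)=p$: if $\gamma$ is a limit point of $C$, pick $p\in G$ with $\sup p\geq\gamma$, so $C\intersect\gamma=p\intersect\gamma$ is cofinal in $\gamma$ and $p$ is closed, whence $\gamma\in p\of C$. Since every condition is a subset of $S$, we get $C\of S$, so $C$ is a club subset of $S$.

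For countable distributivity, fix a condition $p_0$ and a sequence $\<D_n\st n<\omega>$ of dense open sets; I must produce $q\leq p_0$ in $\Intersect_n D_n$. I would take a countable $M\prec H_\theta$ containing $\P$, $S$, $p_0$ and the sequence, and---this is the crucial step---arrange using the stationarity of $S$ that $\delta:=M\intersect\omega_1\in S$ (the set of such $\delta$ is club, so it meets $S$). Inside $M$ I build a descending chain $p_0\geq p_1\geq\cdots$ with $p_{n+1}\in D_n\intersect M$ and $\sup p_n$ cofinal in $\delta$; elementarity keeps each choice in $M$, so $\sup p_n<\delta$, while the cofinality $\omega$ of $\delta$ lets the sups climb to $\delta$. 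Then $q:=\Union_n p_n\union\singleton{\delta}$ is a condition: it lies in $S$ because each $p_n\of S$ and $\delta\in S$; it is bounded with top $\delta$; and it is closed, since any limit point below $\delta$ already lies in some $p_n$ by the end-extension bookkeeping of the previous paragraph, while $\delta$ itself is present. As $q\leq p_{n+1}\in D_n$ and $D_n$ is open, $q\in\Intersect_n D_n$, as required.

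Preservation of a stationary $T\of S$ runs along the same lines. Given a name $\dot E$ for a club and a condition $p_0$, I build a countable $M\prec H_\theta$ with the relevant objects and now choose $M$ so that $\delta=M\intersect\omega_1\in T$, using the stationarity of $T$. Building a descending chain $\<p_n>$ in $M$ as before, I simultaneously choose ordinals $\alpha_n$ with $\sup p_n<\alpha_n<\delta$ and $p_{n+1}\forces\check\alpha_n\in\dot E$ (possible because $\dot E$ is forced unbounded and elementarity keeps $\alpha_n\in M$, hence below $\delta$), with $\alpha_n$ cofinal in $\delta$. The limit $q=\Union_n p_n\union\singleton{\delta}$ is again a condition because $\delta\in T\of S$, and since $\dot E$ is forced closed while $q$ forces each $\alpha_n\in\dot E$ with the $\alpha_n$ cofinal in $\delta$, we obtain $q\forces\check\delta\in\dot E$; as $\delta\in T$, $q$ forces $\dot E\intersect\check T\neq\varnothing$. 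Since $p_0$ and $\dot E$ were arbitrary, $T$ remains stationary.

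The hard part---and the single point where stationarity is indispensable in both the distributivity and the preservation claims---is ensuring that the $\omega$-limit of the fusion sequence is a genuine condition, that is, that its supremum $\delta$ lies in $S$; the elementary-submodel packaging is merely a clean device for forcing $M\intersect\omega_1$ into the stationary set in question (into $S$ for distributivity, into $T$ for preservation, the latter then automatically in $S$). The remaining verifications, namely that $q$ is closed below $\delta$ and that openness of the $D_n$, respectively closedness of $\dot E$, transfers to the limit, are routine once the end-extension analysis of the first paragraph is in hand.
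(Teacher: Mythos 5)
Your proposal is correct and follows essentially the same route as the paper's proof: a countable elementary submodel $X\elesub H_\theta$ with $\delta=X\intersect\omega_1$ forced into the relevant stationary set, a descending chain inside $X$ with suprema climbing to $\delta$, and the limit condition $\Union_n c_n\union\singleton{\delta}$, which is a condition precisely because $\delta\in T\of S$. The only difference is organizational: the paper builds a single $X$-generic sequence (meeting all dense sets in $X$) whose limit condition simultaneously witnesses countable distributivity and forces $\delta\in\tau$, whereas you run the construction twice, once for each conclusion.
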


\begin{proof}
Let $\Q_S$ be the club-shooting forcing for $S$. Since $S$ is unbounded, it is dense that the conditions become unbounded in $S$, and so
the union of the generic filter in $\Q_S$ is a closed unbounded subset of $S$. Suppose that $T\of S$ is stationary in the ground model, and
suppose $c_0\forces\tau$ is a club subset of $\check\omega_1$. Since $T$ is stationary, we may find a countable elementary substructure
$X\elesub H_\theta$ for some large $\theta$, with $c_0,\tau,S,T\in X$ and $\delta=X\intersect\omega_1\in T$. Since $X$ is countable, we may
build a descending sequence $c_0> c_1> \cdots$ of conditions in $\Q_S\intersect X$ that get inside every open dense set in $X$. The limit
condition $c=\Union_n c_n\union\singleton{\delta}$ is a condition in $\Q_S$ precisely because $\delta\in T\of S$. Since $c$ is $X$-generic,
it follows that $c$ decides the values of any name in $X$ for a countable sequence of ordinals, and consequently the forcing is countably
distributive. Similarly, $c$ forces that $\tau$ is unbounded in $\delta$, and consequently $c$ forces that $\delta\in\tau$ and hence forces
that $\tau$ meets $\check T$. Thus, every stationary subset of $S$ is preserved to the extension.
\end{proof}

The independence of the buttons $b_n^*$ for forcing over $\LL$ now follows as an easy consequence. Namely, each $b_n^*$ is a pure button,
since non-stationarity is upward absolute, and we may in any case push all the buttons by collapsing $\omega_1$. More delicately, we may
push just $b_n^*$ by forcing as in the lemma to shoot a club through the complement of $S_n$, that is, with the club-shooting forcing for
$\bigsqcup_{m\neq n}S_m$. Lemma \ref{Lemma.ShootingClubs} exactly ensures that this forcing preserves the stationarity of any remaining
stationary $S_m$ for $m\neq n$, and thus ensures that we may push $b_n^*$ while not inadvertently pushing any other unpushed $b_m^*$.
Meanwhile, the statements $s_k$ asserting ``$2^{\aleph_k}=\aleph_{k+1}$'' are switches that can be controlled independently for $k>1$ by
countably closed forcing, which does not affect the stationarity of any subset of $\omega_1$ and therefore does not interfere with the
buttons $b_n^*$. So we have the desired independent family of buttons and switches for forcing over $\LL$.

Higher analogues of these buttons exist on higher cardinals, where one has a stationary subset $S\of\Cof_\kappa\intersect\kappa^+$ for
$\kappa$ regular and seeks to add by forcing a club set $C$ such that $C\intersect\Cof_\kappa\of S$. The natural forcing to accomplish this
is $\ltkappa$-closed and $\leq\kappa$-distributive, and the analogue of lemma \ref{Lemma.ShootingClubs} goes through.

\section{Applications to various specific classes of forcing}

In this section, we apply the results of \S\S\,\ref{Section.ForcingWithGamma} and \ref{Section.ControlStatements} and determine the valid principles of forcing for various specific natural classes of forcing.

\subsection{Our classes of forcing notions}

For any ordinal $\theta$, the {\df collapse} poset $\Coll(\omega,\theta)$ consists of the finite partial functions from $\omega$ to $\theta$, ordered by inclusion. For any nonzero ordinal $\theta$, forcing with this poset adds a function from $\omega$ onto $\theta$, making it countable in the forcing extension.
Similarly, for any ordinal $\theta$, let $\Add(\omega,\theta)$ be the set of finite partial functions from $\theta\times\omega$ into $2$ and adds $\theta$ many Cohen reals. Let $\Coll$ be the class of all forcing notions $\Q$ that are forcing equivalent to $\Coll(\omega,\theta)$ for some ordinal $\theta$ and
$\Add$ be the class of all forcing notions $\Q$ that are forcing equivalent to $\Add(\omega,\theta)$ for some ordinal $\theta$. Note that both $\Coll$ and $\Add$ include trivial forcing, since $\Add(\omega,0) = \Coll(\omega,0)=\{\varnothing\}$ is trivial (and also $\Coll(\omega,1)$ is forcing equivalent to trivial forcing). Also note that $\Add(\omega,1)$ is isomorphic to $\Coll(\omega,2)$.

Let us say that a forcing notion $\Q$ {\df necessarily collapses} $\theta$ to $\omega$ if every forcing extension by $\Q$ has a function from $\omega$ onto $\theta$ that is not in the ground model. A forcing notion $\Q$ {\df absorbs} a forcing notion $\R$, if $\Q$ is forcing equivalent to $\R*\dot\bbS$ for some (quotient) forcing $\dot\bbS$. This is equivalent to saying that $\R$ is forcing equivalent to a complete subalgebra of the Boolean algebra of $\Q$.

\begin{lemma}[Folklore] Suppose that $\theta$ is any
infinite ordinal.\label{Lemma.CollapseProperties}
\begin{enumerate}
 \item Up to forcing equivalence, $\Coll(\omega,\theta)$ is the unique forcing notion of size $|\theta|$ necessarily collapsing
     $\theta$ to $\omega$.
 \item $\Coll(\omega,\theta)$ absorbs every forcing notion of size $|\theta|$.
 \item  $\Coll(\omega,\theta)*\Coll(\omega,\lambda)$ is forcing equivalent to $\Coll(\omega,\max\{\theta,\lambda\})$.
\end{enumerate}
\end{lemma}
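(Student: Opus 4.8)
The plan is to prove the three folklore facts about collapse forcing, which are standard consequences of the homogeneity and universality of the countable collapse. I would assume throughout that $\theta$ is an infinite ordinal, so $|\theta| = |\Coll(\omega,\theta)|$, and I would freely use that two posets are forcing equivalent exactly when their Boolean completions are isomorphic.

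First I would prove the key universality statement (2), since (1) and (3) will follow from it. The standard argument runs as follows: suppose $\R$ is any forcing notion of size $|\theta|$. Passing to the separative quotient and then to the Boolean completion, one may represent $\R$ as a complete Boolean algebra $\B$ of size at most $2^{|\theta|}$; but what matters is a dense subset of size $|\theta|$. After forcing with $\R$ followed by $\Coll(\omega,\theta)$ — or indeed after forcing merely with $\Coll(\omega,|\R|)=\Coll(\omega,\theta)$ in the extension — the poset $\R$ becomes a countable atomless separative partial order, and by a back-and-forth argument any two countable atomless separative posets are forcing equivalent to Cohen forcing $\Add(\omega,1)$, hence $\Coll(\omega,\theta)$ absorbs the trivial quotient. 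The cleaner route, which I would take, is to show directly that $\R * \dot\Coll(\omega,\theta)$ is forcing equivalent to $\Coll(\omega,\theta)$: forcing with $\R$ preserves $|\theta|$ as having size $|\theta|$ (no new reals are needed), and the two-step iteration necessarily collapses $\theta$ to $\omega$ via the second factor while having a dense subset of size $|\theta|$; then invoking uniqueness (1) identifies the iteration with $\Coll(\omega,\theta)$. Since $\R$ is a factor of $\R * \dot\Coll(\omega,\theta)$, this exhibits $\Coll(\omega,\theta)$ as absorbing $\R$.

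The crux, and where I expect the real work to lie, is the uniqueness claim (1): any forcing notion $\Q$ of size $|\theta|$ that necessarily collapses $\theta$ to $\omega$ is forcing equivalent to $\Coll(\omega,\theta)$. The main obstacle is that "necessarily collapses $\theta$ to $\omega$" must be leveraged to show that the Boolean completion of $\Q$ has a dense subset that is a countable atomless separative poset \emph{in the extension} uniformly below every condition, i.e.\ that $\B(\Q)$ is homogeneous and atomless. I would argue that because every extension by $\Q$ makes $\theta$ countable while $\Q$ has size $|\theta|$, below every condition the forcing is nontrivial, separative, atomless, and of density $|\theta|$; and a classical theorem (the characterization of $\Coll(\omega,\theta)$ as the unique separative atomless poset of density $|\theta|$ that collapses $\theta$, proved by a back-and-forth construction of an isomorphism of dense sets) then yields forcing equivalence with $\Coll(\omega,\theta)$. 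I would cite the standard reference for this characterization rather than reproducing the back-and-forth, since it is genuinely folklore and the details are routine but lengthy.

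Finally, (3) follows quickly from (1) and (2). The iteration $\Coll(\omega,\theta) * \Coll(\omega,\lambda)$ is a forcing of size $\max\{|\theta|,|\lambda|\} = |\max\{\theta,\lambda\}|$, and it necessarily collapses $\max\{\theta,\lambda\}$ to $\omega$, since whichever of $\theta,\lambda$ is larger is collapsed by the corresponding factor. By the uniqueness in (1), this iteration is therefore forcing equivalent to $\Coll(\omega,\max\{\theta,\lambda\})$, as claimed. The only point needing a word of care is that the iteration has the correct cardinality: the first factor has size $|\theta|$, and the $\Coll(\omega,\theta)$-name for the second factor can be taken of size $|\lambda|$, so a standard nice-name count bounds the density of the iteration by $\max\{|\theta|,|\lambda|\}$, matching the size of $\Coll(\omega,\max\{\theta,\lambda\})$.
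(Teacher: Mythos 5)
Items (2) and (3) of your proposal are correct and match the paper's route exactly: the paper also derives both from (1), by observing that $\Q\times\Coll(\omega,\theta)$ (for (2)) and $\Coll(\omega,\theta)*\Coll(\omega,\lambda)$ (for (3)) have the right size and necessarily collapse the relevant ordinal. Your use of the iteration $\R*\dot\bbS$ with $\dot\bbS$ the check-name for $\Coll(\omega,\theta)$ is the same as the paper's product, since the collapse poset is absolute (and your nice-name count in (3) is unnecessary for the same reason). One presentational slip: you announce that (1) and (3) ``will follow from'' (2), but your proof of (2) invokes (1); since your treatment of (1) does not use (2), there is no actual circularity, but the dependency runs (1) $\Rightarrow$ (2), (3), exactly as in the paper, and the write-up should say so.

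The genuine gap is in (1) itself. The ``classical theorem'' you propose to cite---the characterization of $\Coll(\omega,\theta)$ as the unique separative atomless necessarily-collapsing poset of density $|\theta|$---\emph{is} statement (1); citing it amounts to proving the lemma by the lemma, and your reduction (below every condition the forcing is nontrivial, separative, atomless, of density $|\theta|$) is only the easy preliminary step. Even that step needs the chain-condition argument you omit: if some cone had density $\mu<|\theta|$, it would be $\mu^+$-c.c.\ there and hence could not collapse $|\theta|$ below that condition, which is how the paper gets antichains of size $\theta$ below every condition. More importantly, your description of the proof as ``a back-and-forth construction of an isomorphism of dense sets'' does not match how the argument actually goes and would fail for uncountable $\theta$, where a back-and-forth of length $\theta$ breaks down at limit stages (descending sequences need not have lower bounds, and density alone does not let you continue). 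The paper's construction is one-sided and uses the collapsing hypothesis in an essential way: pass to the separative quotient, fix a name $\dot g$ forced to map $\omega$ onto the generic filter $\dot G$ (available because $\Q$ has size $\theta$ and adds a surjection from $\omega$ onto $\theta$), and build refining maximal antichains $A_n$ in which every element of $A_n$ splits into $\theta$ many elements of $A_{n+1}$, each deciding $\dot g(\check n)$. Then $\Union_n A_n$ is a copy of the tree $\theta^\ltomega$, hence forcing equivalent to $\Coll(\omega,\theta)$, and it is dense: given $q\in\Q$, some $p\leq q$ forces $\dot g(\check n)=\check q$ for some $n$, so the element $r\in A_{n+1}$ compatible with $p$ also forces $\check q\in\dot G$, whence $r\leq q$ by separativity. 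Without this construction (or an honest substitute for it), the crux of the lemma remains unproven in your write-up.
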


\begin{proof} (1). Suppose that $\Q$ is a forcing notion
of size $|\theta|$ that necessarily collapses $\theta$ to $\omega$. We may assume without loss of generality that $\Q$ is separative, since
the separative quotient of $\Q$ is forcing equivalent to it and no larger in size. Below every condition in $\Q$, we claim that there is an
antichain of size $\theta$. If $\theta$ is countable, this is immediate since every nontrivial forcing notion has infinite antichains; if
$\theta$ is uncountable, then any failure of this claim would mean that $\Q$ is $\theta$-c.c.~below some condition and consequently unable
to collapse $\theta$ to $\omega$ below that condition, contrary to our assumption. Since forcing with $\Q$ adds a function from $\omega$
onto $\theta$ and $\Q$ has size $\theta$, there is a name $\dot g$ forced to be a function from $\omega$ onto the generic filter $\dot G$.
We build a refining sequence of maximal antichains $A_n\of\Q$ as follows. Begin with $A_0=\{\onex\}$. If $A_n$ is defined, then let
$A_{n+1}$ be a maximal antichain of conditions such that every condition in $A_n$ splits into $\theta$ many elements of $A_{n+1}$, and such
that every element of $A_{n+1}$ decides the value $\dot g(\check n)$. The union $\R=\Union_n A_n$ is clearly isomorphic as a subposet of
$\Q$ to the tree $\theta^\ltomega$, and so it is forcing equivalent to $\Coll(\omega,\theta)$. Furthermore, we claim that $\R$ is dense in
$\Q$. To see this, fix any condition $q\in\Q$. Since $q$ forces that $q$ is in $\dot G$, there is some $p\leq q$ and natural number $n$
such that $p\forces_\Q\dot g(\check n)=\check q$. Since $A_{n+1}$ is a maximal antichain, there is some condition $r\in A_{n+1}$ that is
compatible with $p$. Since $r$ also decides the value of $\dot g(\check n)$ and is compatible with $p$, it must be that $r\forces \dot
g(\check n)=\check q$ also. In particular, $r$ forces $\check q\in\dot G$, and so by separativity it must be that $r\leq q$. So $\R$ is
dense in $\Q$, as desired. Thus, $\Q$ is forcing equivalent to $\R$, which we have said is forcing equivalent to $\Coll(\omega,\theta)$, as
desired.

(2). Suppose that $\Q$ has size $\theta$. Since $\Q\cross\Coll(\omega,\theta)$ also has size $\theta$ and necessarily collapses $\theta$ to
$\omega$, it follows that it is forcing equivalent to $\Coll(\omega,\theta)$. Thus, $\Q$ is absorbed by $\Coll(\omega,\theta)$. We point
out also that the quotient forcing is $\Coll(\omega,\theta)$, a fact of which we shall later make use.

(3). For any other ordinal $\lambda$, the poset $\Coll(\omega,\theta)*\Coll(\omega,\lambda)$ has size $\max\{\theta,\lambda\}$ and
necessarily collapses this ordinal to $\omega$.
\end{proof}

\begin{lemma}\label{Lemma.CollIsLinear}
The collapse forcing class $\Coll$ is a linear forcing class, which is also persistent and closed under products. \end{lemma}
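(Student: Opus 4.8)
The plan is to verify each of the required properties separately, in every case leaning on the Folklore Lemma \ref{Lemma.CollapseProperties}. \emph{Reflexivity} is immediate: since $\Coll(\omega,0)=\{\varnothing\}$ is trivial forcing and the defining formula ``forcing equivalent to $\Coll(\omega,\theta)$ for some ordinal $\theta$'' is interpreted afresh in each model, trivial forcing lies in $\Coll$ in every model of set theory. For \emph{persistence}, suppose $\P,\Q\in\Coll$, say $\P$ is forcing equivalent to $\Coll(\omega,\theta)$ in $V$. The poset $\Coll(\omega,\theta)$ — finite partial functions from $\omega$ into the fixed ordinal $\theta$ — is the very same set in $V$ and in $V^{\Q}$, and a witnessing dense embedding of separative quotients existing in $V$ remains dense in $V^{\Q}$ (no new conditions are added to a ground-model poset). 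Hence $\P$ is still forcing equivalent to $\Coll(\omega,\theta)$ there, so $\P\in\Coll^{V^{\Q}}$, with the same witness $\theta$; the only point is the upward absoluteness of dense embeddings between fixed posets.

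For the \emph{linearity property}, take $\P,\Q\in\Coll$, say forcing equivalent to $\Coll(\omega,\theta)$ and $\Coll(\omega,\lambda)$ respectively, and assume without loss of generality that $|\theta|\leq|\lambda|$. By Lemma \ref{Lemma.CollapseProperties}(3), $\Coll(\omega,\theta)*\Coll(\omega,\check\lambda)$ is forcing equivalent to $\Coll(\omega,\max\{\theta,\lambda\})=\Coll(\omega,\lambda)$, so $\Q$ is forcing equivalent to $\P*\Rdot$, where $\Rdot$ names the collapse $\Coll(\omega,\lambda)^{V^{\P}}$, which by persistence lies in $\Coll^{V^{\P}}$. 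Thus one of the two posets is the other followed by further $\Coll$ forcing, as required; here nothing is delicate, because the second factor is a ground-model collapse with a decided ordinal.

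\emph{Transitivity} is the heart of the matter. Suppose $\Q$ is forcing equivalent to $\Coll(\omega,\theta)$ and $\Q\forces$ ``$\dot\R$ is forcing equivalent to $\Coll(\omega,\dot\lambda)$ for some ordinal $\dot\lambda$''; I want $\Q*\dot\R\in\Coll$. The clean case is when $\dot\lambda$ is forced to equal a fixed ground-model ordinal $\lambda\geq\theta$: then Lemma \ref{Lemma.CollapseProperties}(3) applies verbatim and $\Q*\dot\R$ is forcing equivalent to $\Coll(\omega,\lambda)$. The hard part is that in general $\dot\lambda$ is merely a $\Q$-name for an ordinal, so the size of the second collapse is not decided in $V$. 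The plan is to fix a ground-model cardinal $\eta\geq\theta$ bounding $\dot\lambda$ (names for ordinals are bounded), to check that the iteration $\Q*\dot\R$ then has size $|\eta|$, and to pin down the ordinal it necessarily collapses to $\omega$, so that the uniqueness clause (Lemma \ref{Lemma.CollapseProperties}(1)) together with the absorption clause (Lemma \ref{Lemma.CollapseProperties}(2), which moreover records that the quotient is again a collapse) identifies $\Q*\dot\R$ as a member of $\Coll$. I expect the bookkeeping around the undecided value of $\dot\lambda$ — ensuring that the iteration behaves as a single homogeneous collapse rather than as a mixture of collapses of different ordinals below incompatible conditions — to be the one genuinely delicate point of the whole argument.

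Finally, \emph{closure under products} requires no separate work: since $\Q\cross\R$ is forcing equivalent to $\Q*\check\R$, the class is closed under products once it is transitive and persistent, by the remark in \S\ref{Section.ForcingWithGamma}; alternatively one applies Lemma \ref{Lemma.CollapseProperties}(3) directly to $\Coll(\omega,\theta)\cross\Coll(\omega,\lambda)$, whose second factor carries the decided ordinal $\check\lambda$. Collecting these facts, $\Coll$ is reflexive, transitive and has the linearity property, hence is a linear forcing class, and it is moreover persistent and closed under products.
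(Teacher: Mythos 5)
Your treatment of reflexivity, persistence, linearity, and closure under products is correct and essentially the paper's own argument: reflexivity via the triviality of $\Coll(\omega,0)$, persistence via the absoluteness of $\Coll(\omega,\theta)$ (the conditions are finite), linearity via lemma \ref{Lemma.CollapseProperties}(3) with the quotient again a collapse, and products either via transitivity-plus-persistence or directly via lemma \ref{Lemma.CollapseProperties}. (One small point in the linearity step: ``without loss of generality $|\theta|\leq|\lambda|$'' should be routed through the observation that only the cardinality of the index matters, since $\Coll(\omega,\theta)$ is forcing equivalent to $\Coll(\omega,|\theta|)$ by lemma \ref{Lemma.CollapseProperties}(1); the ordinal maximum in clause (3) need not be $\lambda$.)

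The divergence is in transitivity, where the paper's entire proof is your ``clean case'': it simply cites lemma \ref{Lemma.CollapseProperties}(3). You instead announce a further general case, with $\dot\lambda$ an undecided $\Q$-name, and sketch a plan for showing $\Q*\dot\R\in\Coll$ there by bounding $\dot\lambda$ by some $\eta$ and invoking the uniqueness clause. That plan cannot be completed, and the ``genuinely delicate point'' you flag is in fact fatal to the literal name-level statement. Take $\Q=\Coll(\omega,\omega)$ and let $\dot\R$ be obtained by mixing: below one condition it names $\Coll(\omega,\omega_1)$ and below an incompatible condition it names $\Coll(\omega,\omega_2)$. Then $\Q$ forces $\dot\R\in\Coll$, yet $\Q*\dot\R$ is not forcing equivalent to any $\Coll(\omega,\theta)$: below the first condition the iteration is forcing equivalent to $\Coll(\omega,\omega_1)$ and below the second to $\Coll(\omega,\omega_2)$, whereas the Boolean completion of any $\Coll(\omega,\theta)$ is homogeneous, isomorphic to itself below every condition. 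Equivalently, in the terms of your plan: the mixed iteration has size $\aleph_2$ but does not necessarily collapse $\omega_2$, so uniqueness cannot identify it with $\Coll(\omega,\omega_2)$, and it is not $\Coll(\omega,\omega_1)$ either, since below one condition it does collapse $\omega_2$. No bookkeeping with lemma \ref{Lemma.CollapseProperties}(1) and (2) can repair this, because the conclusion is simply false for mixed names.

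What saves the lemma --- and what the paper's one-line proof implicitly uses --- is that transitivity is needed only through its operative consequence, stated immediately after the definition in \S\,\ref{Section.ForcingWithGamma}: any $\Coll$ forcing extension of a $\Coll$ forcing extension is a $\Coll$ forcing extension, which is what validates axiom \axiomf{4} and feeds lemma \ref{Lemma.Labeling}. In a concrete two-step extension $W[G][H]$, the second-step poset is forcing equivalent in $W[G]$ to $\Coll(\omega,\lambda)$ for an actual ordinal $\lambda$, which is an ordinal of $W$; since the collapse poset is absolute, $W[G][H]$ is then a generic extension of $W$ by $\Coll(\omega,\theta)\times\Coll(\omega,\lambda)$, which lies in $\Coll$ by lemma \ref{Lemma.CollapseProperties}(3). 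Put differently, one resolves the undecided name by passing below a condition deciding $\dot\lambda$, not by proving the mixed iteration itself is a single collapse. Your instinct to reduce to a decided ground-model ordinal was exactly right; the error lies only in the attempt to establish the unreduced name-level closure.
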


\begin{proof} We have already pointed out that $\Coll$ includes trivial forcing, and so $\Coll$ is reflexive. It is transitive by lemma \ref{Lemma.CollapseProperties} (3). The same fact shows that it is linear, since the larger of $\Coll(\omega,\lambda)$ and $\Coll(\omega,\theta)$ factors through the smaller, and the quotient is collapse forcing. Finally, we observe that because the conditions are finite, the poset $\Coll(\omega,\theta)$ is absolute to all models of set theory having the ordinal $\theta$, and so $\Coll$ is persistent. It is closed under products, since $\Coll(\omega,\theta)\times\Coll(\omega,\lambda)$ has size $\max\singleton{\theta,\lambda}$ and necessarily collapses $\max\singleton{\theta,\lambda}$ to $\omega$, so by lemma \ref{Lemma.CollapseProperties} it is forcing equivalent to $\Coll(\omega,\max\singleton{\theta,\lambda})$.
\end{proof}

\begin{lemma}\label{Lemma.AddIsLinear}
The class of Cohen forcing $\Add$ is a linear forcing class, which is also persistent and closed under products.
\end{lemma}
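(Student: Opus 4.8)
The plan is to mirror the structure of Lemma~\ref{Lemma.CollIsLinear}, establishing reflexivity, transitivity, the linearity property, persistence, and closure under products for the Cohen forcing class $\Add$. The one substantial difference from the collapse case is that Cohen forcing does not collapse cardinals, so the clean size-counting arguments of Lemma~\ref{Lemma.CollapseProperties} are unavailable; instead everything will rest on the combinatorics of adding Cohen reals, indexed by the number $\theta$ of reals added. The key algebraic facts I would isolate are the analogues of Lemma~\ref{Lemma.CollapseProperties}(3): that $\Add(\omega,\theta)\times\Add(\omega,\lambda)$ is forcing equivalent to $\Add(\omega,\theta+\lambda)$ (equivalently $\Add(\omega,\max\{\theta,\lambda\})$ once one observes that $\theta+\lambda$, $\max\{\theta,\lambda\}$, and any infinite ordinal of the same cardinality all give forcing-equivalent posets), and that $\Add(\omega,\theta)*\Add(\omega,\lambda)^{V^{\Add(\omega,\theta)}}$ is forcing equivalent to $\Add(\omega,\theta+\lambda)$. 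These follow from the product/sum structure of the finite partial function posets together with the fact that what $\Add(\omega,\theta)$ adds, up to forcing equivalence, depends only on the cardinality of $\theta$.

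First I would record reflexivity, since $\Add(\omega,0)=\{\varnothing\}$ is trivial, exactly as noted for $\Coll$. Next, transitivity and the linearity property both follow from the iteration/sum fact: given $\Add(\omega,\theta)$ and a further Cohen iterate, the two-step iteration adds some number of Cohen reals and so is itself forcing equivalent to a single $\Add(\omega,\mu)$, giving transitivity; and for linearity, given any two posets $\Add(\omega,\theta)$ and $\Add(\omega,\lambda)$, the one adding more reals absorbs the other with Cohen quotient, since $\Add(\omega,\max\{\theta,\lambda\})$ is forcing equivalent to $\Add(\omega,\min\{\theta,\lambda\})*\Add(\omega,\nu)$ for an appropriate $\nu$. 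For persistence I would argue, as in the collapse case, that because the conditions of $\Add(\omega,\theta)$ are finite partial functions into $2$, the poset is defined absolutely in any model containing $\theta$, so membership in $\Add$ is preserved into Cohen extensions. Closure under products is then the product fact: $\Add(\omega,\theta)\times\Add(\omega,\lambda)$ is again of the form $\Add(\omega,\mu)$.

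The main obstacle is the bookkeeping around \emph{which} ordinal parametrizes the resulting Cohen forcing, since unlike the collapse case there is no single normalizing invariant like ``size of $\theta$ necessarily collapsed to $\omega$'' forced on us by the definition. The delicate point is that $\Add(\omega,\theta)$ for an ordinal $\theta$ really depends only on $|\theta|$ up to forcing equivalence (for infinite $\theta$), and that this invariance must be checked to hold persistently, i.e.\ in the relevant forcing extensions where cardinalities may have been rearranged. Concretely, when I factor $\Add(\omega,\max\{\theta,\lambda\})$ as $\Add(\omega,\lambda)*\dot\R$ to witness linearity, I must verify that the quotient $\dot\R$ is again Cohen forcing from the point of view of $V^{\Add(\omega,\lambda)}$, and this requires knowing that the ordinal indexing the quotient still computes to Cohen forcing there; this is where the persistence of the defining property does the real work. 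Once these cardinality-invariance and quotient computations are pinned down, the five required properties assemble exactly as in Lemma~\ref{Lemma.CollIsLinear}.
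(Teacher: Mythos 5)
Your proposal is correct and follows essentially the same route as the paper's proof: reflexivity via the triviality of $\Add(\omega,0)$, transitivity and closure under products via the equivalence of $\Add(\omega,\theta)*\Add(\omega,\lambda)$ and $\Add(\omega,\theta)\times\Add(\omega,\lambda)$ with $\Add(\omega,\theta+\lambda)$, linearity via the larger poset absorbing the smaller with Cohen quotient, and persistence from the absoluteness of the finite-condition definition. Your explicit flagging of the cardinality-invariance fact (that $\Add(\omega,\theta)$ depends only on $|\theta|$ up to forcing equivalence) merely makes explicit what the paper's one-line linearity claim leaves implicit, so there is no substantive divergence.
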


\begin{proof}
This class is reflexive since $\Add(\omega,0)$ is trivial and transitive since $\Add(\omega,\theta)*\Add(\omega,\lambda)$ is forcing equivalent to $\Add(\omega,\theta+\lambda)$. It has the linearity property because if $\theta<\lambda$, then $\Add(\omega,\theta)*\Add(\omega,\lambda)$ is forcing equivalent to $\Add(\omega,\lambda)$. It is persistent since the definition of $\Add(\omega,\theta)$ is absolute to all models having $\theta$ as an ordinal. It is closed under products since $\Add(\omega,\theta)\times\Add(\omega,\lambda)\iso \Add(\omega,\theta+\lambda)$.
\end{proof}

The L\'evy collapse poset $\Coll(\omega,\ltlambda)$ is defined to be the finite support product $\prod_{\alpha<\lambda}\Coll(\omega,\alpha)$ and collapses all ordinals below $\lambda$ to $\omega$. This has particularly nice features when $\lambda$ is an inaccessible cardinal, but we may consider it for any ordinal $\lambda$. The class of {\df L\'evy collapse forcing}, denoted $\Coll^<$, is the class of all forcing notions that are forcing equivalent to $\Coll(\omega,\ltlambda)$ for some ordinal $\lambda$.

Lemma \ref{Lemma.CollapseProperties} (3) implies that $\Coll(\omega,\theta)$ is forcing equivalent to $\Coll(\omega,{\smalllt}(\theta+1))$ for any ordinal $\theta$, and so $\Coll\of\Coll^<$. Note that $\Coll^<$ includes the forcing to add $\omega_1$ many Cohen reals $\Add(\omega,\omega_1)\iso\Coll(\omega,{\smalllt}\omega_1)$, as well as the L\'evy collapse $\Coll(\omega,\ltkappa)$ of any inaccessible cardinal $\kappa$, if there are such cardinals.

\begin{lemma}\label{Lemma.Coll^<Properties} Let $\theta$ be an ordinal.
\begin{enumerate}
 \item If $\theta$ is not a cardinal or is singular, then $\Coll(\omega,{<}\theta)$ is forcing equivalent to $\Coll(\omega,\theta)$.
 \item $\Coll(\omega,\lttheta)*\Coll(\omega,\ltlambda)$ is forcing equivalent to $\Coll(\omega,{<}\max\{\theta,\lambda\})$.
\end{enumerate}
\end{lemma}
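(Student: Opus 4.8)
The plan is to handle the two parts separately, each time reducing to the uniqueness and product facts about the ordinary collapse recorded in lemma \ref{Lemma.CollapseProperties}. Throughout, note that the hypothesis of part (1) forces $\theta$ to be infinite, since every finite ordinal is a regular cardinal.

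For part (1), I would verify the two hypotheses of the uniqueness characterization in lemma \ref{Lemma.CollapseProperties}(1) for the poset $\Coll(\omega,{<}\theta)$ and then invoke that uniqueness. First, a direct count of the finite conditions shows $\Coll(\omega,{<}\theta)$ has size $|\theta|$ for infinite $\theta$: there are $|\theta|$ many possible finite supports, and for each the number of conditions on that support is at most $|\theta|$. Second, I would check that $\Coll(\omega,{<}\theta)$ necessarily collapses $\theta$ to $\omega$. Each coordinate $\Coll(\omega,\alpha)$ is a factor of the product, so every $\alpha<\theta$ becomes countable in any extension. If $\theta$ is not a cardinal, then $V$ already has a bijection between $\theta$ and the cardinal $|\theta|<\theta$, and composing it with a surjection $\omega\to|\theta|$ from the extension yields a surjection $\omega\to\theta$; if $\theta$ is a singular cardinal, then fixing a cofinal sequence $\langle\theta_\xi\st\xi<\mathrm{cf}(\theta)\rangle$ in $V$, both $\mathrm{cf}(\theta)$ and each $\theta_\xi$ become countable, so $\theta$ is a countable union of countable sets in the extension and hence countable. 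Since $\theta$ is uncountable in $V$ in every case of interest (and $\Coll(\omega,{<}\theta)$ is nontrivial otherwise), the resulting surjection is automatically new. With size $|\theta|$ and necessary collapse established, lemma \ref{Lemma.CollapseProperties}(1) yields that $\Coll(\omega,{<}\theta)$ is forcing equivalent to $\Coll(\omega,\theta)$, as claimed.

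For part (2), the key is a single poset isomorphism, $\Coll(\omega,{<}\mu)\cross\Coll(\omega,{<}\mu)\iso\Coll(\omega,{<}\mu)$, valid for every ordinal $\mu$. This follows by applying on each coordinate $\alpha<\mu$ the canonical isomorphism $\Coll(\omega,\alpha)\cross\Coll(\omega,\alpha)\iso\Coll(\omega,\alpha)$ induced by a fixed bijection $\omega\cross 2\iso\omega$, which glues a pair of finite partial functions $\omega\to\alpha$ into a single one; these coordinatewise isomorphisms respect finite supports and so assemble into the desired isomorphism of the products. Next I would convert the iteration into a product: because $\Coll(\omega,{<}\lambda)$ is defined by finite conditions, its definition is absolute between $V$ and any extension (exactly as in the persistence arguments of lemmas \ref{Lemma.CollIsLinear} and \ref{Lemma.AddIsLinear}), so the second factor is a check name for a ground-model poset and $\Coll(\omega,{<}\theta)*\Coll(\omega,{<}\lambda)$ is forcing equivalent to the product $\Coll(\omega,{<}\theta)\cross\Coll(\omega,{<}\lambda)$.

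Writing $\mu=\min\{\theta,\lambda\}$ and $\nu=\max\{\theta,\lambda\}$, I would split off the common initial segment, $\Coll(\omega,{<}\nu)\iso\Coll(\omega,{<}\mu)\cross\mathbb{T}$, where $\mathbb{T}$ is the finite support product of the $\Coll(\omega,\alpha)$ for $\mu\le\alpha<\nu$. Then, using commutativity of products and the key isomorphism in the middle step,
$$\Coll(\omega,{<}\theta)\cross\Coll(\omega,{<}\lambda)\iso\bigl(\Coll(\omega,{<}\mu)\cross\Coll(\omega,{<}\mu)\bigr)\cross\mathbb{T}\iso\Coll(\omega,{<}\mu)\cross\mathbb{T}=\Coll(\omega,{<}\nu),$$
which settles both orderings of $\theta$ and $\lambda$ at once. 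The cardinality count and the coordinatewise gluing are routine; the one step that deserves care is the passage from the iteration $*$ to the product $\cross$, which must be justified by the absoluteness of the finite-condition poset in the ground model's computation of the second factor. I expect no genuine obstacle beyond bookkeeping the finite supports correctly through the reindexing.
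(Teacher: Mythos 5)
Your proof is correct, and on part (1) it is essentially the paper's argument: the paper likewise observes that $\Coll(\omega,{<}\theta)$ has size $|\theta|$ and necessarily collapses $\theta$, and then invokes the uniqueness characterization of lemma \ref{Lemma.CollapseProperties}(1); you simply spell out the collapsing argument (composing through $|\theta|$ in the non-cardinal case, writing $\theta$ as a countable union of countable sets in the singular case) that the paper leaves to the reader. On part (2) you take a mildly different route. The paper's proof is the one-line remark that the claim follows from lemma \ref{Lemma.CollapseProperties} ``by combining the posets at the common stages,'' that is, at each stage $\alpha<\min\{\theta,\lambda\}$ the two copies of $\Coll(\omega,\alpha)$ merge into one via the forcing-equivalence fact of lemma \ref{Lemma.CollapseProperties}(3), which itself rests on the uniqueness characterization. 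You instead merge the common stages by a direct combinatorial isomorphism $\Coll(\omega,\alpha)\cross\Coll(\omega,\alpha)\iso\Coll(\omega,\alpha)$, induced by a fixed bijection between $\omega\cross 2$ and $\omega$ and assembled coordinatewise through the finite supports. This buys something: your argument is entirely elementary, makes no further appeal to the uniqueness lemma within part (2), and yields an actual poset isomorphism of the products rather than mere forcing equivalence---whereas the paper's stagewise substitution of forcing-equivalent coordinates into a finite-support product tacitly uses that such substitution preserves forcing equivalence. Both routes require the same preliminary step of converting the iteration into a product via the absoluteness of the finitely-conditioned poset, which the paper leaves implicit and you rightly flag as the point needing care. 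The one place you skate is the countable case of part (1): your parenthetical about nontriviality deserves a line of support, for instance that when $\omega<\theta$ is countable, the coordinate $\Coll(\omega,\omega)$ contributes a generic surjection $g\colon\omega\to\omega$, and then $h\circ g$ is a \emph{new} surjection from $\omega$ onto $\theta$ for any ground-model bijection $h\colon\omega\to\theta$, since otherwise $g=h^{-1}\circ(h\circ g)$ would already lie in the ground model; with that line added, your proof is complete.
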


\begin{proof}
(1). If $\theta$ is not a cardinal or is singular, then $\Coll(\omega,{<}\theta)$ necessarily collapses $\theta$, and since this poset has
size $|\theta|$, the conclusion follows from lemma \ref{Lemma.CollapseProperties}.

(2). This follows from lemma \ref{Lemma.CollapseProperties} by combining the posets at the common stages.
\end{proof}

In general, if $\kappa$ is a cardinal, we can consider collapse-to-$\kappa$ forcing class $\Coll_\kappa$, which consists of all forcing of the form $\Coll(\kappa,\theta)$, where $\theta$ is any ordinal. Since we want to interpret the class in any model of set theory, we shall consider $\Coll_\kappa$ only when $\kappa$ is a definable regular cardinal, whose definition is absolute to all $\Coll_\kappa$ extensions. For example, we shall consider $\Coll_{\omega_1}$ and $\Coll_{\omega_2}$, and so on.

\begin{lemma}\label{Lemma.Coll_kappaLinear}
For any absolutely definable regular cardinal $\kappa$, the class $\Coll_\kappa$ is a linear forcing class, which is also persistent and closed under products.
\end{lemma}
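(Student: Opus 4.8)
The plan is to mimic the proofs of lemmas \ref{Lemma.CollapseProperties} and \ref{Lemma.CollIsLinear}, replacing $\omega$ by the regular cardinal $\kappa$ throughout. First I would establish the $\kappa$-analogue of the folklore lemma \ref{Lemma.CollapseProperties}: that up to forcing equivalence $\Coll(\kappa,\theta)$ is the unique $\ltkappa$-closed separative forcing notion of size $|\theta|^{\ltkappa}$ that necessarily collapses $\theta$ to $\kappa$, and that consequently both $\Coll(\kappa,\theta)\cross\Coll(\kappa,\lambda)$ and $\Coll(\kappa,\theta)*\Coll(\kappa,\lambda)$ are forcing equivalent to $\Coll(\kappa,\max\{\theta,\lambda\})$. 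The uniqueness argument runs exactly as in lemma \ref{Lemma.CollapseProperties}(1), building a refining tree of maximal antichains below each condition in which every node splits into $|\theta|$ many successors, each deciding a further value of a name $\dot g$ for a surjection from $\kappa$ onto the generic filter. The one genuine change is that the tree now has height $\kappa$ rather than $\omega$, so one builds the antichains $A_\alpha$ for $\alpha<\kappa$ and must pass through limit stages $\alpha<\kappa$, where one takes lower bounds of the descending sequences of conditions built so far; this is precisely where the $\ltkappa$-closure of $\Q$ and the regularity of $\kappa$ are used. The resulting dense subposet is isomorphic to the tree $\theta^{\ltkappa}$, which is forcing equivalent to $\Coll(\kappa,\theta)$, and the size bookkeeping changes from $|\theta|$ to $|\theta|^{\ltkappa}$, matching the size of $\Coll(\kappa,\theta)$ for infinite $\theta\geq\kappa$.

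With this in hand the structural properties follow as in lemma \ref{Lemma.CollIsLinear}. Reflexivity holds because $\Coll(\kappa,0)=\{\varnothing\}$ is trivial forcing and lies in $\Coll_\kappa$. For persistence I note that $\Coll(\kappa,\theta)$ is $\ltkappa$-closed, using the regularity of $\kappa$, since a $\subseteq$-increasing union of fewer than $\kappa$ many conditions, each of size less than $\kappa$, is again a condition; hence this forcing preserves all cardinals and cofinalities $\leq\kappa$, so $\kappa$ remains a regular cardinal in every $\Coll_\kappa$ extension, and by hypothesis its definition is absolute to such extensions, so $\kappa$ is computed the same way there. Moreover $\ltkappa$-closed forcing adds no new $\ltkappa$-sequences of ground-model elements and hence no new conditions of any $\Coll(\kappa,\lambda)$, so the poset $\Coll(\kappa,\lambda)$ is absolute to all $\Coll_\kappa$ extensions and $\Coll_\kappa$ is persistent. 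Closure under products is then immediate from the product equivalence above, or alternatively from transitivity and persistence via $\Q\cross\R\equiv\Q*\check\R$, and one checks the hypotheses of the uniqueness step directly for the product: $\Coll(\kappa,\theta)\cross\Coll(\kappa,\lambda)$ is $\ltkappa$-closed, has size $|\theta|^{\ltkappa}\cdot|\lambda|^{\ltkappa}=|\max\{\theta,\lambda\}|^{\ltkappa}$, and necessarily collapses $\max\{\theta,\lambda\}$ to $\kappa$ since the larger factor does.

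It remains to verify transitivity and the linearity property, and both reduce to the iteration equivalence. For transitivity, given $\Q=\Coll(\kappa,\theta)\in\Coll_\kappa$ and $\dot\R\in\Coll_\kappa^{V^\Q}$, persistence shows $\dot\R$ names a poset $\Coll(\kappa,\lambda)$ computed identically in $V$, so $\Q*\dot\R$ is forcing equivalent to $\Coll(\kappa,\theta)*\Coll(\kappa,\lambda)\equiv\Coll(\kappa,\max\{\theta,\lambda\})\in\Coll_\kappa$. For the linearity property, given two members $\Coll(\kappa,\theta)$ and $\Coll(\kappa,\lambda)$ with, say, $|\theta|\leq|\lambda|$, the iteration equivalence shows the larger one factors as $\Coll(\kappa,\lambda)\equiv\Coll(\kappa,\theta)*\dot\Coll(\kappa,\lambda)$, with the quotient being collapse forcing in the extension; thus one of the two is forcing equivalent to the other followed by further $\Coll_\kappa$ forcing. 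Being reflexive, transitive and possessing the linearity property, $\Coll_\kappa$ is a linear forcing class. I expect the main obstacle to be the uniqueness step, specifically the correct handling of the limit stages below $\kappa$ in the antichain-refinement construction and the attendant $|\theta|^{\ltkappa}$ size computation; once the $\kappa$-analogue of lemma \ref{Lemma.CollapseProperties} is in place, everything else is routine.
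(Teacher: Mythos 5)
Your overall strategy is exactly the paper's: the paper's own proof is only a few lines, asserting that $\Coll(\kappa,\lambda)*\Coll(\kappa,\theta)$ is forcing equivalent to $\Coll(\kappa,\max\{\lambda,\theta\})$ ``by an argument analogous to lemma \ref{Lemma.CollapseProperties},'' deriving linearity from this same fact, persistence from the absoluteness of the definition of $\kappa$ together with the fact that $\Coll_\kappa$ posets add no new sets of size less than $\kappa$, and closure under products from persistence plus transitivity. Your proposal simply fills in that ``analogous argument,'' and your treatment of reflexivity, persistence, products, transitivity and linearity agrees with the paper's in every respect.

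However, one step of your expansion is not right as stated, and it is exactly at the size bookkeeping you yourself flagged as the main obstacle. You formulate the uniqueness lemma as: $\Coll(\kappa,\theta)$ is the unique $\ltkappa$-closed separative poset of size $|\theta|^{\ltkappa}$ that necessarily collapses $\theta$ to $\kappa$. But the density argument of lemma \ref{Lemma.CollapseProperties}(1) requires a name $\dot g$ forced to be a surjection from $\kappa$ onto the generic filter $\dot G$, and for this the poset must collapse its own cardinality $|\theta|^{\ltkappa}$ to $\kappa$, not merely $\theta$. In the $\omega$ case these coincide, since there the poset has size $|\theta|$ and $|\theta|^{\ltomega}=|\theta|$; but when $|\theta|^{\ltkappa}>|\theta|$ (for instance $\kappa=\omega_1$ and $\theta=\omega_2$ in a model where \CH\ fails) your hypothesis does not obviously supply the needed surjection, so your proof of the uniqueness statement as phrased breaks down. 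The repair is to state uniqueness for $\ltkappa$-closed separative posets of size $\lambda$ that necessarily collapse $\lambda$ itself to $\kappa$, where $\lambda=\lambda^{\ltkappa}$, and then to check that each poset to which you apply it---$\Coll(\kappa,\theta)$ and the relevant products and iterations---does collapse $\lambda=\max\{|\theta|,|\lambda|\}^{\ltkappa}$. This last point holds by a block-coding density argument: since $\kappa$ is regular, every condition has bounded domain, so for each ground-model $f\in({}^{\ltkappa}\theta)^V$ it is dense that $f$ occur as a block of the generic surjection, whence the extension has a surjection from $\kappa$ onto $({}^{\ltkappa}\theta)^V$. A second, smaller repair: at limit stages of your antichain refinement, choosing a single lower bound for each branch need not produce a maximal antichain at the limit level; one should instead place a maximal antichain below (the cone determined by) each branch, and note that the resulting tree, while not literally ${}^{\ltkappa}\theta$, is still forcing equivalent to $\Coll(\kappa,\lambda)$. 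With these corrections, your argument goes through and yields the lemma exactly as the paper intends.
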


\begin{proof}
The class $\Coll_\kappa$ is reflexive, since $\Coll(\kappa,0)=\singleton{\varnothing}$ is trivial. The class is transitive, since $\Coll(\kappa,\lambda)*\Coll(\kappa,\theta)$ is forcing equivalent to $\Coll(\kappa,\max\set{\lambda,\theta})$ by an argument analogous to lemma \ref{Lemma.CollapseProperties}. The same fact shows that $\Coll_\kappa$ is linear. Since the definition of $\kappa$ is absolute and posets in $\Coll_\kappa$ do not add new sets of size less than $\kappa$, it follows that $\Coll_\kappa$ is persistent. From this and transitivity, it follows that $\Coll_\kappa$ is closed under products.
\end{proof}

In the same spirit, we define the class of all $\kappa$-Cohen forcing, denoted by $\Add_\kappa$, consisting of
all forcing notions of the form $\Add(\kappa,\theta)$, the poset to add $\theta$ many Cohen subsets to $\kappa$, having conditions that are partial functions from $\theta\times\kappa$ to $2$ of size less than $\kappa$. We consider this class only when $\kappa$ is a definable regular cardinal, whose definition is absolute to all $\Add_\kappa$ extensions. In this case, the class $\Add_\kappa$ is a linear forcing class by essentially the same argument as in lemma \ref{Lemma.AddIsLinear}.

A forcing notion $\P$ has {\df essential size} $\delta$, if the complete Boolean algebra corresponding to $\P$ has size $\delta$ and $\P$ is not forcing equivalent to any $\Q$ whose complete Boolean algebra has smaller size. If $W$ is any model of set theory, we can define the {\df forcing distance from $\LL$} as the least $\LL$-cardinal $\delta$ such that $W$ can be written as $\LL[G]$ where $G$ is $\P$-generic over $\LL$ for a forcing notion $\P$ of essential size $\delta$. We write $\mathrm{fd}_\LL = \delta$ in this case.

\begin{lemma}\label{lem:esssizeratchet}
If $W\models\mathrm{fd}_\LL = \delta$ and $H$ is $\Q$-generic over $W$ for some $\Q\in W$, then $W[H]\models\mathrm{fd}_\LL \geq \delta$.
\end{lemma}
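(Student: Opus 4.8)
The plan is to argue by contradiction, exploiting the fact that $W$ sits as an intermediate model between $\LL$ and $W[H]$, together with the classical intermediate model theorem for set forcing. First I would note that $W[H]$ is genuinely a set-forcing extension of $\LL$: since $W=\LL[G]$ for some generic $G$ by the hypothesis $W\satisfies\mathrm{fd}_\LL=\delta$, and $\Q\in W$ is a set, we have $W[H]=\LL[G][H]$, a set-forcing extension, so $\mathrm{fd}_\LL$ is well defined in $W[H]$. Now suppose toward a contradiction that $W[H]\satisfies\mathrm{fd}_\LL=\gamma$ for some $\LL$-cardinal $\gamma<\delta$. By definition of forcing distance this means $W[H]=\LL[K]$ for a $\P$-generic $K$ over $\LL$, where $\P$ has essential size $\gamma$; passing to the completion, we may take $\P$ to be a complete Boolean algebra $\B\in\LL$ with $|\B|^\LL=\gamma$.

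Next I would observe that $\LL\of W\of W[H]=\LL[K]$ and that $W$ is a transitive model of $\ZFC$ (being a forcing extension of $\LL$), so $W$ is an intermediate $\ZFC$-model of the set-generic extension $\LL[K]$. I would then invoke the intermediate model theorem (see, e.g., \cite{Jech:SetTheory3rdEdition}): any such intermediate model arises as $W=\LL[K\intersect\mathbb{D}]$ for some complete subalgebra $\mathbb{D}$ of $\B$ lying in $\LL$. Since $\mathbb{D}\of\B$, we have $|\mathbb{D}|^\LL\leq|\B|^\LL=\gamma$, so $W$ is a generic extension of $\LL$ by a forcing notion whose complete Boolean algebra has size at most $\gamma$. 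Hence the essential size of this forcing is at most $\gamma$, and therefore the least $\LL$-cardinal witnessing the forcing distance of $W$ is at most $\gamma$; that is, $W\satisfies\mathrm{fd}_\LL\leq\gamma<\delta$, contradicting the hypothesis $W\satisfies\mathrm{fd}_\LL=\delta$. This contradiction shows $W[H]\satisfies\mathrm{fd}_\LL\geq\delta$, as desired.

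The size-counting here is routine once the intermediate model theorem is in hand, so the step I expect to require the most care is the clean application of that theorem: one must confirm that $W$ really is a $\ZFC$-model sitting between $\LL$ and the \emph{set}-forcing extension $\LL[K]$, and that the witnessing complete subalgebra $\mathbb{D}$ genuinely lies in $\LL$ with $|\mathbb{D}|\leq|\B|$ — both standard features of set forcing. A secondary point worth stating is that forcing does not disturb the constructible universe, so $\LL$ and its cardinals are computed identically in $W$ and in $W[H]$; this is exactly what licenses comparing the two forcing distances against the same $\LL$-cardinal $\delta$. Notably, the argument uses nothing about $\Q$ beyond its being a set forcing in $W$, which is why the conclusion is a uniform lower bound: any further set forcing can only keep the forcing distance from $\LL$ the same or push it higher, so $\mathrm{fd}_\LL$ behaves exactly as a ratchet value.
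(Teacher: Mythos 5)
Your proposal is correct and takes essentially the same approach as the paper: both arguments rest on the intermediate model theorem (Jech, Lemma 15.43) to write $W=\LL[G']$ for some complete subalgebra of the Boolean algebra presenting $W[H]$ over $\LL$, and then compare cardinalities computed in $\LL$. The only difference is presentational---the paper argues directly that \emph{every} algebra $\B$ presenting $W[H]$ satisfies $|\B|^{\LL}\geq\delta$, while you run the same comparison as a proof by contradiction.
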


\begin{proof}
Let $W[H] = \LL[G]$ for some $G$ which is $\P$-generic over $\LL$ for some $\P\in\LL$ and let $\B$ be the complete Boolean algebra corresponding to $\P$. Since $\LL\subseteq W \subseteq W[H] = \LL[G]$, we know (by \cite[lemma 15.43]{Jech:SetTheory3rdEdition}) that there is a subalgebra $\C$ of $\B$ such that $W = \LL[G']$ for some $G'$ which is $\C$-generic over $\LL$. We assumed that $\mathrm{fd}^W_\LL = \delta$, and thus $|\C|^\LL \geq \delta$. Since $\C$ was a subalgebra of $\B$, we have that $|\B|^\LL \geq \delta$.
\end{proof}

\subsection{The modal logic of collapse forcing}\label{Section.CollapseForcing}

We aim to determine the provably valid principles of collapse forcing.
Lemma \ref{Lemma.CollIsLinear} and theorem \ref{Theorem.EasyValidities} give us \theoryf{S4.3} as a lower bound.

\begin{theorem}\label{Theorem.CollValid=S4.3}
If\/ \ZFC\ is consistent, then the \ZFC-provably valid principles of collapse forcing $\Coll$ are exactly those in \theoryf{S4.3}.
\end{theorem}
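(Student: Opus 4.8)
The lower bound is already at hand: Lemma~\ref{Lemma.CollIsLinear} shows that $\Coll$ is a linear forcing class, so by Theorem~\ref{Theorem.EasyValidities}(3) every assertion of \theoryf{S4.3} is \ZFC-provably valid for $\Coll$. All the work is therefore in the upper bound, and the plan is to invoke Theorem~\ref{Theorem.LongRatchetImpliesS4.3}: it suffices to exhibit a single model $W\satisfies\ZFC$ carrying a \emph{long ratchet} for $\Coll$, for then the valid principles of $\Coll$ forcing over $W$ lie within \theoryf{S4.3}. Since every provably valid principle holds in every model of \ZFC, and in particular is valid over $W$, this bounds the provably valid principles by \theoryf{S4.3} as well. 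Assuming \ZFC\ is consistent, I would take $W=\LL$, the constructible universe of a model of \ZFC, whose cardinal structure is transparent and, crucially, definable and absolute to all forcing extensions.

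Over $\LL$ I would set $r_\alpha := \hbox{``}\aleph_\alpha^\LL\hbox{ is countable,''}$ for $0<\alpha<\ORD$, where $\aleph_\alpha^\LL$ is the $\alpha$th infinite cardinal of $\LL$, a definable ordinal absolute to all extensions, so that the sequence is uniform, given by a single formula $r(\alpha)$. Each $r_\alpha$ is an unpushed pure button: countability of a fixed ordinal is upward absolute, and $\aleph_\alpha^\LL$ is genuinely uncountable in $\LL$ for $\alpha\geq1$. Moreover $r_{\alpha+1}$ necessarily implies $r_\alpha$, since $\aleph_\alpha^\LL<\aleph_{\alpha+1}^\LL$ as ordinals. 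Because $\Coll(\omega,\theta)$ has size $|\theta|$ and is therefore $|\theta|^+$-c.c., it preserves all $\LL$-cardinals above $\theta$; hence no $\Coll$ extension of $\LL$ can make every $\aleph_\alpha^\LL$ countable, which is exactly the long-ratchet requirement.

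The heart of the matter is the clause $\necessary[\neg r_{\alpha+1}\implies\possible(r_\alpha\wedge\neg r_{\alpha+1})]$, which must be verified over \emph{every} $\Coll$ extension of $\LL$. Here I would first use that, by the transitivity and linearity of $\Coll$ together with Lemma~\ref{Lemma.CollapseProperties}(3), every $\Coll$ extension of $\LL$ has the form $\LL[G]$ for a single $G$ generic over some $\Coll(\omega,\theta)$, and that in such an extension $r_\alpha$ holds precisely when $\aleph_\alpha^\LL\leq\theta$; thus the ratchet value is governed exactly by $\theta$. Working in such an $\LL[G]$ with $\neg r_{\alpha+1}$, so $\aleph_{\alpha+1}^\LL>\theta$, the ordinals $\aleph_\alpha^\LL$ and $\aleph_{\alpha+1}^\LL$ survive as consecutive $\LL[G]$-cardinals, and forcing with $\Coll(\omega,\aleph_\alpha^\LL)$ over $\LL[G]$ collapses $\aleph_\alpha^\LL$ to $\omega$ while, being $\aleph_{\alpha+1}^\LL$-c.c., preserving $\aleph_{\alpha+1}^\LL$; this pushes $r_\alpha$ without pushing $r_{\alpha+1}$ (and if $r_\alpha$ already holds, reflexivity of $\Coll$ suffices). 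This cardinal-preservation bookkeeping is the main obstacle; once it is in place, one need not separately arrange continuity, since the proof of Theorem~\ref{Theorem.LongRatchetImpliesS4.3} reindexes any uniform ratchet into a continuous one. Combining the two bounds yields that the \ZFC-provably valid principles of $\Coll$ forcing are exactly \theoryf{S4.3}.
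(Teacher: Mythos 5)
Your proposal is correct and takes essentially the same route as the paper: the lower bound from lemma~\ref{Lemma.CollIsLinear} together with theorem~\ref{Theorem.EasyValidities}, and the upper bound from the identical long ratchet $r_\alpha$ asserting ``$\aleph_\alpha^\LL$ is countable'' over $\LL$, fed into theorem~\ref{Theorem.LongRatchetImpliesS4.3}. Your chain-condition bookkeeping (the poset $\Coll(\omega,\aleph_\alpha^\LL)$ has size below $\aleph_{\alpha+1}^\LL$ and so preserves it) merely makes explicit what the paper's proof leaves as a one-line remark.
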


\begin{proof}
For the upper bound, we shall show that $\Coll$ admits a long ratchet over the constructible universe $\LL$. For each non-zero ordinal $\alpha$, let $r_\alpha$ be the statement ``$\aleph_\alpha^\LL$ is countable.'' These statements form a long ratchet for collapse forcing over the constructible universe $\LL$, since any collapse extension $\LL[G]$ collapses an initial segment of the cardinals of $\LL$ to $\omega$, and in any such extension in which $\aleph_\alpha^\LL$ is not yet collapsed, the forcing to collapse it will not yet collapse $\aleph_{\alpha+1}^\LL$. Thus, by theorem \ref{Theorem.LongRatchetImpliesS4.3}, the valid principles of collapse forcing over $\LL$ are contained within \theoryf{S4.3}. So the valid principles of collapse forcing over $\LL$ are precisely \theoryf{S4.3}, and if\/ \ZFC\ is consistent, then the \ZFC-provably valid principles of collapse forcing are exactly \theoryf{S4.3}.
\end{proof}

\begin{theorem}\label{Theorem.ContainingColl}
If\/ $\Gamma$ is a transitive reflexive forcing class and necessarily $\Coll\of\Gamma$, then the valid principles of $\Gamma$ forcing over $\LL$ contain \theoryf{S4.2} and are contained within \theoryf{S4.3}.
\end{theorem}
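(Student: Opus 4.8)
The plan is to prove the two bounds separately, in each case reducing to a structural theorem from Sections~\ref{Section.ForcingWithGamma} and~\ref{Section.ControlStatements}. The lower bound $\theoryf{S4.2}\of(\text{valid principles})$ I would obtain by verifying that $\Gamma$ is \emph{directed}, whereupon Theorem~\ref{Theorem.EasyValidities}(2) applies. The upper bound $(\text{valid principles})\of\theoryf{S4.3}$ I would obtain from Theorem~\ref{Theorem.LongRatchetImpliesS4.3} by exhibiting a long ratchet for $\Gamma$ over $\LL$, built from the forcing distance $\mathrm{fd}_\LL$.

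For directedness, fix $\P,\Q\in\Gamma$ in an arbitrary model $V$ and choose an ordinal $\theta$ with $|\P|,|\Q|\le|\theta|$; set $\R=\Coll(\omega,\theta)\in\Coll\of\Gamma$. By Lemma~\ref{Lemma.CollapseProperties}(2) and its proof, $\R$ absorbs each of $\P$ and $\Q$, and the quotient forcing in each case is again $\Coll(\omega,\theta)$. Since $\Coll\of\Gamma$ holds necessarily, that quotient lies in $\Gamma^{V^\P}$ (respectively $\Gamma^{V^\Q}$), so $\P$ and $\Q$ are both factors of the single forcing $\R\in\Gamma$ by further $\Gamma$ forcing. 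Hence $\Gamma$ is directed in every model, and being also reflexive and transitive, Theorem~\ref{Theorem.EasyValidities}(2) makes $\theoryf{S4.2}$ valid for $\Gamma$ over every model, in particular over $\LL$.

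For the upper bound, let $r_\alpha$ be the assertion ``$\mathrm{fd}_\LL\ge\aleph_{\alpha+1}^\LL$'' for each ordinal $\alpha$. Lemma~\ref{lem:esssizeratchet} shows $\mathrm{fd}_\LL$ is non-decreasing under forcing, so each $r_\alpha$ is a pure button; it is unpushed in $\LL$, and the family is monotone because the $\aleph_{\alpha+1}^\LL$ increase. Any $\Gamma$ extension of $\LL$ is a set-forcing extension, so its forcing distance is a fixed $\LL$-cardinal, and thus no extension can satisfy all $r_\alpha$; this is the no-top property of a long ratchet. For the crucial step of pushing $r_\alpha$ without $r_{\alpha+1}$, start in a $\Gamma$ extension $W[G]=\LL[G']$ with $\mathrm{fd}_\LL^{W[G]}=\delta<\aleph_{\alpha+2}^\LL$ (so $\delta\le\aleph_{\alpha+1}^\LL$) and force with $\Coll(\omega,\aleph_{\alpha+1}^\LL)\in\Coll\of\Gamma$; by transitivity the composite is $\Gamma$ forcing. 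Over $\LL$ this composite is equivalent to $\B\times\Coll(\omega,\aleph_{\alpha+1}^\LL)$, where $\B$ is the complete algebra of $G'$; since this product has size $\aleph_{\alpha+1}^\LL$ and necessarily collapses $\aleph_{\alpha+1}^\LL$, Lemma~\ref{Lemma.CollapseProperties}(1) identifies it with $\Coll(\omega,\aleph_{\alpha+1}^\LL)$ itself.

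I expect the main obstacle to be computing that the resulting forcing distance is \emph{exactly} $\aleph_{\alpha+1}^\LL$, and this is precisely where the choice of marks matters. The upper estimate is the essential size of $\Coll(\omega,\aleph_{\alpha+1}^\LL)$, namely $(\aleph_{\alpha+1}^\LL)^{\aleph_0}$ as computed in $\LL$, which equals $\aleph_{\alpha+1}^\LL$ because that cardinal is regular and $\GCH$ holds in $\LL$; the lower estimate holds because the extension makes $\aleph_{\alpha+1}^\LL$ countable, so any realizing forcing must collapse it and hence have essential size at least $\aleph_{\alpha+1}^\LL$. Therefore $\mathrm{fd}_\LL=\aleph_{\alpha+1}^\LL$ in the new extension, giving $r_\alpha\wedge\neg r_{\alpha+1}$. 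It is exactly to avoid the overshoot $(\aleph_\alpha^\LL)^{\aleph_0}=\aleph_{\alpha+1}^\LL$ at singular cardinals of cofinality $\omega$ that I index the ratchet by the successor cardinals $\aleph_{\alpha+1}^\LL$ rather than by all $\aleph_\alpha^\LL$. With the long ratchet established, Theorem~\ref{Theorem.LongRatchetImpliesS4.3} places the valid principles of $\Gamma$ forcing over $\LL$ within $\theoryf{S4.3}$, which together with the lower bound completes the proof.
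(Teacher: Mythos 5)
Your lower bound is exactly the paper's: you verify that $\Gamma$ is directed via Lemma~\ref{Lemma.CollapseProperties}(2), noting that the quotient of $\Coll(\omega,\theta)$ over $\P$ is again collapse forcing and hence lies in $\Gamma^{V^\P}$ because necessarily $\Coll\of\Gamma$, and then you invoke Theorem~\ref{Theorem.EasyValidities}(2). For the upper bound, however, you depart from the paper, which uses the direct long ratchet $r_\alpha=$ ``some $\LL$-cardinal above $\aleph_\alpha^\LL$ is collapsed''; these are unpushed pure buttons over $\LL$, and the push step has no overshoot because $\Coll(\omega,\aleph_{\alpha+1}^\LL)$ has size $\aleph_{\alpha+1}^\LL$ and therefore preserves all larger still-uncollapsed $\LL$-cardinals. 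You instead rederive, for this special case, the forcing-distance ratchet of Theorem~\ref{Theorem.ArbSizeS4.3}; the outline is legitimate, and your reduction of the composite forcing to $\B\times\Coll(\omega,\theta)$, identified with $\Coll(\omega,\theta)$ via Lemma~\ref{Lemma.CollapseProperties}(1), is correct.

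But the crucial cardinality computation contains a genuine error. Under the paper's definition, essential size refers to the size of the complete Boolean algebra, and the completion of $\Coll(\omega,\theta)$ has size $2^{|\theta|}$, not $\theta^{\aleph_0}$: the conditions $\set{(0,\xi)}$ for $\xi<\theta$ form an antichain, and suprema of distinct subsets of an antichain are distinct elements of the completion, giving at least $2^\theta$ many elements (while $2^{|\theta|}$ is also an upper bound, since regular open sets are subsets of the poset). The same argument shows that \emph{any} complete Boolean algebra whose forcing collapses $\theta$ to $\omega$ has size at least $2^\theta$, so your lower estimate ``at least $\aleph_{\alpha+1}^\LL$'' is not sharp either. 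In $\LL$, with $\theta=\aleph_{\alpha+1}^\LL$, the forcing distance after your collapse is therefore exactly $(2^\theta)^\LL=\aleph_{\alpha+2}^\LL$, which pushes $r_{\alpha+1}$ as well: your push step overshoots by one, and the ratchet clause $\necessary[\neg r_{\alpha+1}\implies\possible(r_\alpha\wedge\neg r_{\alpha+1})]$ is not witnessed by the forcing you propose. Note that your worry about overshoot at singular cardinals of cofinality $\omega$ located the problem in the wrong place: the overshoot comes from $2^\theta=\theta^+$ and occurs at \emph{every} $\theta$. The gap is repairable within your framework: to realize $r_\alpha\wedge\neg r_{\alpha+1}$ from a model where $\mathrm{fd}_\LL\leq\aleph_\alpha^\LL$, collapse $\aleph_\alpha^\LL$ rather than $\aleph_{\alpha+1}^\LL$; then by $\GCH$ in $\LL$ the new forcing distance is exactly $(2^{\aleph_\alpha})^\LL=\aleph_{\alpha+1}^\LL$ at every $\alpha$, singular or not, so your successor-indexing device becomes unnecessary, and the remaining case $\mathrm{fd}_\LL=\aleph_{\alpha+1}^\LL$ is handled trivially by reflexivity. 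Still, the paper's collapsed-cardinal ratchet is the cleaner instrument here, precisely because it requires no Boolean-completion cardinality computations at all.
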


\begin{proof} Suppose that $\Gamma$ is a transitive reflexive forcing class necessarily containing collapse forcing. By lemma \ref{Lemma.CollapseProperties}, it follows that every poset is absorbed by any sufficiently large collapse poset, and so $\Gamma$ is directed. Thus, by theorem \ref{Theorem.EasyValidities}, the modal theory \theoryf{S4.2} is valid for $\Gamma$ forcing over any model of set theory.

Let us now establish the upper bound of \theoryf{S4.3} by finding a long ratchet over the constructible universe $\LL$. For each nonzero ordinal $\alpha$, let $r_\alpha$ be the assertion ``Some $\LL$ cardinal above $\aleph_\alpha^\LL$ is collapsed.'' Clearly, these statements are all false in $\LL$, each implies its own necessity, each implies the previous, and if $r_{\alpha+1}$ is not yet true in a forcing extension $\LL[G]$, then we may collapse $\aleph_{\alpha+1}^\LL$ without collapsing any larger cardinal, making $r_\alpha\wedge\neg r_{\alpha+1}$ true. So these statements form a long ratchet over $\LL$, and therefore, by theorem \ref{Theorem.LongRatchetImpliesS4.3}, the
valid principles of $\Gamma$ forcing over $\LL$ are included within \theoryf{S4.3}. \end{proof}

\begin{corollary}\label{Corollary.ContainingCollAndLinear=S4.3}
If\/ \ZFC\ is consistent and $\Gamma$ is a linear forcing class containing collapse forcing $\Coll$, then the \ZFC-provably valid principles of $\Gamma$ forcing are exactly \theoryf{S4.3}.
\end{corollary}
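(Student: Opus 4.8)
The plan is to obtain the two inclusions separately and then combine them, reading the hypothesis that $\Gamma$ contains collapse forcing \emph{de dicto}, i.e.\ as the assertion that necessarily $\Coll\of\Gamma$. For the lower bound, I would simply invoke theorem \ref{Theorem.EasyValidities}(3): since $\Gamma$ is by hypothesis a linear forcing class, \theoryf{S4.3} is valid for $\Gamma$ forcing over every model of set theory. As this validity is itself provable in \ZFC, every assertion of \theoryf{S4.3} is a \ZFC-provably valid principle of $\Gamma$ forcing, so \theoryf{S4.3} is contained in the provably valid principles.

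For the upper bound, I would observe that a linear forcing class is in particular reflexive and transitive, so that $\Gamma$ satisfies the hypotheses of theorem \ref{Theorem.ContainingColl}; that theorem then yields that the valid principles of $\Gamma$ forcing over the constructible universe $\LL$ are contained within \theoryf{S4.3}. The genuine content being used here is exactly the long ratchet ``some $\LL$ cardinal above $\aleph_\alpha^\LL$ is collapsed'' built in that proof, which $\Gamma$ supports because it contains $\Coll$ and is reflexive and transitive.

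To pass from validity over $\LL$ to provable validity, I would use the consistency of \ZFC: fix any model $V\satisfies\ZFC$ and pass to its constructible universe $\LL^V$, which again satisfies \ZFC. If $\varphi\notin\theoryf{S4.3}$, then by theorem \ref{Theorem.ContainingColl} some substitution instance $\varphi(\psi_0,\ldots,\psi_k)$, with $\necessary$ interpreted as $\necessary_\Gamma$, fails in $\LL^V$; since $\LL^V\satisfies\ZFC$, this instance is not a theorem of \ZFC, and hence $\varphi$ is not a \ZFC-provably valid principle of $\Gamma$ forcing. Thus the provably valid principles are contained within \theoryf{S4.3}, and together with the lower bound they are exactly \theoryf{S4.3}.

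There is no real obstacle here: this corollary is a packaging of theorems \ref{Theorem.EasyValidities} and \ref{Theorem.ContainingColl}, where all the genuine forcing work (the directedness via absorption into large collapse posets giving the \theoryf{S4.2} lower content, and the long ratchet over $\LL$ giving the \theoryf{S4.3} ceiling) has already been carried out. The only points requiring a little care are the \emph{de dicto} reading of ``$\Gamma$ contains $\Coll$'' so that theorem \ref{Theorem.ContainingColl} applies verbatim, and the routine model-theoretic step that a failing substitution instance in the \ZFC-model $\LL^V$ witnesses the non-provability of that instance, hence the non-provable-validity of $\varphi$.
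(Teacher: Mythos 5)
Your proposal is correct and follows essentially the same route as the paper's own proof: the lower bound from theorem \ref{Theorem.EasyValidities}(3) for linear classes and the upper bound from theorem \ref{Theorem.ContainingColl}. Your additional remarks (the \emph{de dicto} reading of $\Coll\of\Gamma$ and the passage from a failing substitution instance over $\LL^V$ to non-provability via $\Con(\ZFC)$) merely make explicit the routine steps the paper leaves implicit.
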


\begin{proof}
If $\Gamma$ is a linear forcing class, then \theoryf{S4.3} is valid by theorem \ref{Theorem.EasyValidities}. And the validities are
contained with \theoryf{S4.3} by the previous theorem.
\end{proof}

\begin{theorem}
If\/ \ZFC\ is consistent, then the \ZFC-provably valid principles of L\'evy collapse forcing $\Coll^<$ is exactly \theoryf{S4.3}.
\end{theorem}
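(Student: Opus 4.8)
The plan is to follow the exact template established by Theorem~\ref{Theorem.CollValid=S4.3} for ordinary collapse forcing. The lower bound of \theoryf{S4.3} is immediate: Lemma~\ref{Lemma.Coll^<Properties} shows (together with the earlier structural lemmas) that $\Coll^<$ is a linear forcing class, and so by Theorem~\ref{Theorem.EasyValidities}(3) the modal theory \theoryf{S4.3} is valid for $\Coll^<$ forcing over any model. Hence it suffices to establish the upper bound, namely that the \ZFC-provably valid principles of $\Coll^<$ are contained within \theoryf{S4.3}. As in the collapse case, I would do this by exhibiting a long ratchet for $\Coll^<$ over the constructible universe $\LL$ and then invoking Theorem~\ref{Theorem.LongRatchetImpliesS4.3}.

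**Constructing the long ratchet.**

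First I would define, for each nonzero ordinal $\alpha$, the statement $r_\alpha :=$ ``$\aleph_\alpha^\LL$ is countable,'' exactly as in Theorem~\ref{Theorem.CollValid=S4.3}. I then need to check the four ratchet conditions over $\LL$. Each $r_\alpha$ is false in $\LL$ and is a pure button because countability is upward absolute. Each $r_{\alpha+1}$ necessarily implies $r_\alpha$, since collapsing a larger cardinal of $\LL$ to $\omega$ forces all smaller $\LL$-cardinals to be countable as well. The crux is the fourth condition: in any $\Coll^<$ extension $\LL[G]$ where $\aleph_{\alpha+1}^\LL$ is not yet collapsed, one must be able to force $r_\alpha \wedge \neg r_{\alpha+1}$ by further $\Coll^<$ forcing. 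Since a L\'evy collapse collapses an \emph{initial segment} of the ordinals to $\omega$, from such an $\LL[G]$ one applies $\Coll(\omega,{<}\aleph_{\alpha+1}^\LL)$ to collapse everything below $\aleph_{\alpha+1}^\LL$ (making $\aleph_\alpha^\LL$ countable) while leaving $\aleph_{\alpha+1}^\LL$ a cardinal. Finally, the long-ratchet condition holds because no L\'evy collapse extension can make \emph{every} $\aleph_\alpha^\LL$ countable; any particular extension $\Coll(\omega,{<}\lambda)$ collapses only the ordinals below $\lambda$, so $\aleph_\lambda^\LL$ (or a suitable larger $\LL$-cardinal) survives as an uncountable cardinal, giving each extension a bounded ordinal ratchet value.

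**The main obstacle and conclusion.**

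The step requiring the most care is the fourth ratchet condition, because it must be verified over an \emph{arbitrary} $\Coll^<$ extension $\LL[G]$, not just over $\LL$ itself. The subtlety is that after a L\'evy collapse $\Coll(\omega,{<}\lambda)$ has been applied, the iteration-and-absorption facts of Lemma~\ref{Lemma.Coll^<Properties}(2) must be used to confirm that the quotient forcing needed to collapse the next initial segment is again genuinely in $\Coll^<$ and that it does not inadvertently collapse $\aleph_{\alpha+1}^\LL$. Here the key point is that $\Coll(\omega,{<}\theta)*\Coll(\omega,{<}\mu)$ is forcing equivalent to $\Coll(\omega,{<}\max\{\theta,\mu\})$, so further L\'evy collapsing below a threshold below $\aleph_{\alpha+1}^\LL$ stays within the class and preserves $\aleph_{\alpha+1}^\LL$ as a cardinal of $\LL[G]$; the precise cardinal arithmetic of which $\LL$-cardinals remain uncollapsed in $\LL[G]$ must be tracked using the essential-size bookkeeping. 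Once the long ratchet is in hand, Theorem~\ref{Theorem.LongRatchetImpliesS4.3} gives that the valid principles of $\Coll^<$ forcing over $\LL$ are contained within \theoryf{S4.3}. Combined with the lower bound, the valid principles over $\LL$ are exactly \theoryf{S4.3}, and since the lower bound is provable for any model, if \ZFC\ is consistent then the \ZFC-provably valid principles of L\'evy collapse forcing are exactly \theoryf{S4.3}.
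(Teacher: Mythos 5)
Your proof is correct, but it takes a different route from the paper's. The paper disposes of this theorem in two lines: by lemma~\ref{Lemma.Coll^<Properties}, the class $\Coll^<$ is a linear forcing class containing $\Coll$, so the result is an instance of the general corollary~\ref{Corollary.ContainingCollAndLinear=S4.3}, whose upper bound (theorem~\ref{Theorem.ContainingColl}) uses the long ratchet $r_\alpha=$ ``some $\LL$-cardinal above $\aleph_\alpha^\LL$ is collapsed.'' You instead build a bespoke long ratchet for $\Coll^<$ by reusing the statements $r_\alpha=$ ``$\aleph_\alpha^\LL$ is countable'' from theorem~\ref{Theorem.CollValid=S4.3}. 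This works, but only because of a special feature of $\Coll^<$ that your argument correctly exploits: a L\'evy collapse extension of $\LL$ collapses exactly an initial segment of the $\LL$-cardinals to $\omega$ and preserves all the rest, so whenever $\aleph_{\alpha+1}^\LL$ is uncountable in $\LL[G]$ it is still a regular cardinal there, and lemma~\ref{Lemma.Coll^<Properties}(2) absorbs the two-step iteration into a single $\Coll(\omega,{<}\aleph_{\alpha+1}^\LL)$-extension of $\LL$, which preserves $\aleph_{\alpha+1}^\LL$ by the usual chain-condition argument. Note that your ratchet would \emph{fail} for an arbitrary transitive reflexive class containing $\Coll$ (e.g., all forcing), since such forcing can first merge $\aleph_\alpha^\LL$ and $\aleph_{\alpha+1}^\LL$ into the same uncountable cardinality, after which making the former countable unavoidably makes the latter countable too; this is exactly why the paper's general theorem uses the ``some cardinal above $\aleph_\alpha^\LL$ is collapsed'' formulation. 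So your approach buys a self-contained verification specific to $\Coll^<$ (at the cost of the initial-segment analysis you correctly flag as the delicate step---though the appeal to essential-size bookkeeping is unnecessary there), while the paper's approach buys brevity and generality, since the same corollary covers every linear class containing $\Coll$.
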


\begin{proof}
It follows by lemma \ref{Lemma.Coll^<Properties} that $\Coll^<$ is a linear forcing class containing $\Coll$, and so this theorem is an instance of corollary \ref{Corollary.ContainingCollAndLinear=S4.3}.
\end{proof}

\begin{theorem}\label{Theorem.Coll_kappaValid=S4.3}
If\/ \ZFC\ is consistent, then the \ZFC-provably valid principles of collapse-to-$\kappa$ forcing, for any absolutely definable regular cardinal $\kappa$, are exactly \theoryf{S4.3}.
\end{theorem}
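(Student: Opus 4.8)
The plan is to follow exactly the template established for the ordinary collapse class $\Coll$ in Theorem~\ref{Theorem.CollValid=S4.3}, since $\Coll_\kappa$ has already been shown to be a linear forcing class (lemma~\ref{Lemma.Coll_kappaLinear}). The lower bound is immediate: by lemma~\ref{Lemma.Coll_kappaLinear} together with theorem~\ref{Theorem.EasyValidities}(3), \theoryf{S4.3} is valid for $\Coll_\kappa$ forcing over any model of set theory, hence is among the \ZFC-provably valid principles. The entire burden is therefore the upper bound, which I would obtain by exhibiting a long ratchet for $\Coll_\kappa$ over the constructible universe $\LL$ and then invoking theorem~\ref{Theorem.LongRatchetImpliesS4.3} to conclude that the valid principles of $\Coll_\kappa$ forcing over $\LL$ are contained within \theoryf{S4.3}.

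The natural candidate for the ratchet, adapting the statement ``$\aleph_\alpha^\LL$ is countable'' used in Theorem~\ref{Theorem.CollValid=S4.3}, is to let $r_\alpha$ be the assertion ``$\aleph_{\kappa+\alpha}^\LL$ has cardinality $\kappa$,'' or equivalently ``$|\aleph_{\kappa+\alpha}^\LL| \leq \kappa$,'' where the indices are chosen so that one is tracking the $\LL$-cardinals at and above $\kappa$ that have been collapsed to have size $\kappa$. (One must phrase this carefully using an absolute indexing of the $\LL$-cardinals, which is available since the $\aleph_\alpha^\LL$ are absolutely definable from the canonical well-ordering of $\LL$.) The point is that forcing with $\Coll(\kappa,\theta)$ over a model of the form $\LL[G]$ collapses an initial segment of the $\LL$-cardinals above $\kappa$ down to size $\kappa$, precisely as $\Coll(\omega,\theta)$ does to $\omega$; and since $\kappa$ is regular and the forcing is ${<}\kappa$-closed, no cardinals below $\kappa$ are disturbed, so $\kappa$ itself is preserved and the indexing remains meaningful in all extensions. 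I would then verify the four ratchet clauses: each $r_\alpha$ is false in $\LL$ (there $\aleph_{\kappa+\alpha}^\LL$ genuinely has cardinality $\aleph_{\kappa+\alpha}$, strictly above $\kappa$), each $r_\alpha$ is a pure button since having cardinality $\kappa$ is upward absolute, each $r_{\alpha+1}$ necessarily implies $r_\alpha$ since collapsing a larger cardinal to $\kappa$ collapses all the intermediate ones, and in any extension where $r_{\alpha+1}$ is not yet true one may force $r_\alpha \wedge \neg r_{\alpha+1}$ by collapsing exactly $\aleph_{\kappa+\alpha}^\LL$ to $\kappa$ with a single member of $\Coll_\kappa$, stopping short of the next cardinal. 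Finally, no $\Coll_\kappa$ extension can satisfy every $r_\alpha$, since a single collapse poset $\Coll(\kappa,\theta)$ only collapses $\LL$-cardinals up to $|\theta|$, leaving all larger ones untouched; this is exactly the ``long ratchet'' condition that no extension pushes all the buttons.

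I expect the main obstacle to be the bookkeeping around the indexing and absoluteness of the cardinals relative to $\kappa$ rather than any conceptual difficulty. One must ensure that the relevant $\LL$-cardinals stay correctly addressed across iterated $\Coll_\kappa$ extensions—in particular that $\kappa$ is never itself collapsed and that ``having cardinality $\kappa$'' is the right upward-absolute, pure-button formulation (one would confirm that if $\aleph_{\kappa+\alpha}^\LL$ is forced to have cardinality $\kappa$ then this persists in all further $\Coll_\kappa$ extensions, which follows because cardinality can only decrease under forcing and these posets never increase cardinalities below $\kappa$). This requires the hypothesis, already built into the definition of $\Coll_\kappa$, that $\kappa$ is an absolutely definable regular cardinal whose definition is preserved in all $\Coll_\kappa$ extensions; I would note explicitly that regularity of $\kappa$ together with the ${<}\kappa$-closure of the collapse posets is what guarantees $\kappa$'s preservation and hence the coherence of the ratchet indexing. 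With the long ratchet in hand, theorem~\ref{Theorem.LongRatchetImpliesS4.3} delivers the upper bound over $\LL$, and combining with the lower bound yields that the valid principles of $\Coll_\kappa$ forcing over $\LL$ are exactly \theoryf{S4.3}; since $\LL$ is a model of \ZFC\ whenever \ZFC\ is consistent, the \ZFC-provably valid principles are contained in \theoryf{S4.3}, and together with the provable lower bound they are exactly \theoryf{S4.3}.
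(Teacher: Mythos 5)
Your overall architecture is exactly the paper's: the lower bound via lemma~\ref{Lemma.Coll_kappaLinear} and theorem~\ref{Theorem.EasyValidities}, and the upper bound via a long ratchet over $\LL$ fed into theorem~\ref{Theorem.LongRatchetImpliesS4.3}. But your specific ratchet statements break at limit stages, and the step that fails is precisely your claim that one can push $r_\alpha\wedge\neg r_{\alpha+1}$ ``by collapsing exactly $\aleph_{\kappa+\alpha}^\LL$ to $\kappa$, stopping short of the next cardinal,'' together with the assertion that ``a single collapse poset $\Coll(\kappa,\theta)$ only collapses $\LL$-cardinals up to $|\theta|$, leaving all larger ones untouched.'' This is false for uncountable $\kappa$ when $\theta$ is singular of cofinality less than $\kappa$. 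Take $\kappa=\omega_1$ and $\lambda=\aleph_{\kappa+\omega}^\LL$, so $\mu=\mathop{\rm cf}(\lambda)=\omega<\kappa$. If $W$ is a $\Coll_\kappa$ extension of $\LL$ in which $\lambda$ is still a cardinal, then $(\lambda^\mu)^W$ has $W$-cardinality at least $\lambda^+$ by K\"onig's theorem. Any further ${\smalllt}\kappa$-closed extension $W[H]$ adds no new functions from $\mu$ to $\lambda$; so if $|\lambda|\leq\kappa$ in $W[H]$, then composing with a bijection $\lambda\to\kappa$ injects $(\lambda^\mu)^W$ into $(\kappa^\mu)^{W[H]}=(\kappa^\mu)^W=(\kappa^\mu)^\LL$, a set of size $\kappa$ by \GCH\ in $\LL$. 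Hence $|\lambda^+|\leq\kappa$ in $W[H]$ as well: \emph{no} member of $\Coll_\kappa$ (indeed no ${\smalllt}\kappa$-closed forcing) can collapse such a $\lambda$ to $\kappa$ while preserving $\lambda^+$. Concretely, $\Coll(\kappa,\lambda)$ has size $\lambda^{\ltkappa}=\lambda^+$ and collapses $\lambda^+$ too. So your clause $\necessary[\neg r_{\alpha+1}\implies\possible(r_\alpha\wedge\neg r_{\alpha+1})]$ fails outright at every limit $\alpha$ with $\mathop{\rm cf}(\alpha)<\kappa$, and your sequence is not a ratchet. The phrase ``precisely as $\Coll(\omega,\theta)$ does to $\omega$'' is where the error hides: every infinite cardinal has cofinality at least $\omega$, so this phenomenon is invisible in theorem~\ref{Theorem.CollValid=S4.3} and first appears for uncountable $\kappa$.

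The repair is to track only $\LL$-\emph{successor} cardinals, which is what the paper does: writing $\kappa^\LL=\aleph_\beta^\LL$, its ratchet statement $r_\alpha$ is ``some $\LL$-cardinal above $\aleph_{\beta+\alpha}^\LL$ is collapsed,'' pushed by collapsing $\aleph_{\beta+\alpha+1}^\LL$. That cardinal is regular in $\LL$, and since $\Coll_\kappa$ extensions add no sequences of length less than $\kappa$, its cofinality remains at least $\kappa$ in every relevant extension; hence $\theta^{\ltkappa}=\theta$ there, $\Coll(\kappa,\theta)$ has size $\theta$, is $\theta^+$-c.c., and preserves all larger $\LL$-cardinals, which is exactly what the pushing clause needs. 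Equivalently, you could keep your formulation but define $r_\alpha$ as ``$|\aleph_{\kappa+\alpha+1}^\LL|\leq\kappa$,'' so that every push collapses an $\LL$-successor. Everything else in your write-up---the lower bound, purity via upward absoluteness of collapsing, the observation that no single extension pushes all the buttons, and the appeal to theorem~\ref{Theorem.LongRatchetImpliesS4.3}---is correct and matches the paper.
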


\begin{proof}
By lemma \ref{Lemma.Coll_kappaLinear}, the collapse-to-$\kappa$ forcing class $\Coll_\kappa$ is a linear forcing class, and so by theorem \ref{Theorem.EasyValidities}, every \theoryf{S4.3} assertion is valid for collapse-to-$\kappa$ forcing. Conversely, consider $\kappa$ as it is defined in $\LL$, namely, $\kappa^\LL=\aleph_\beta^\LL$ for some ordinal $\beta$. For each nonzero ordinal $\alpha$, let $r_\alpha$ be
the assertion ``some $\LL$-cardinal above $\aleph_{\beta+\alpha}^\LL$ is collapsed.'' Just as in the previous section, these form a long ratchet for collapse-to-$\kappa$ forcing over $\LL$, because in any $\Coll_\kappa$ forcing extension $\LL[G]$, if no cardinals above $\aleph_{\beta+\alpha}^\LL$ are collapsed, then we can collapse $\aleph_{\beta+\alpha}^\LL$ to $\kappa$ without collapsing any larger $\LL$-cardinals. By theorem \ref{Theorem.LongRatchetImpliesS4.3}, it follows that the valid principles of collapse-to-$\kappa$ forcing over $\LL$ are contained within \theoryf{S4.3}. Thus, if \ZFC\ is consistent, it follows that the \ZFC-provably valid principles of collapse-to-$\kappa$ forcing are exactly \theoryf{S4.3}.
\end{proof}

\begin{theorem}\label{Theorem.ArbSizeS4.3}
Suppose that $\Gamma$ is a transitive reflexive forcing class, with the property that there is a definable proper class $C$ of cardinals in $\LL$, such that in any forcing extension $\LL[G]$ by a forcing notion in $\Gamma$ having essential size $\delta_0$ and any larger $\delta\in C$, there is a forcing notion in $\Gamma^{\LL[G]}$ having essential size $\delta$. Then the valid principles of $\Gamma$ forcing over $\LL$ are contained within \theoryf{S4.3}.
\end{theorem}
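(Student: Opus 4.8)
The plan is to apply Theorem \ref{Theorem.LongRatchetImpliesS4.3}, so that it suffices to exhibit a long ratchet for $\Gamma$ over the constructible universe $\LL$. The natural quantity to ratchet on is the forcing distance $\mathrm{fd}_\LL$, which by Lemma \ref{lem:esssizeratchet} never decreases under forcing and is therefore perfectly suited to a unidirectional control. Let $\langle\delta_\alpha\st 0<\alpha<\ORD\rangle$ be the increasing enumeration of $C$ (thinning $C$ to a definable subclass if that turns out to be convenient), and define $r_\alpha$ to be the assertion ``$\mathrm{fd}_\LL\geq\delta_\alpha$''. Since $\mathrm{fd}_\LL$ and the enumeration of $C$ are definable from $\LL$, the family $\langle r_\alpha\rangle$ is uniform, and after the reindexing of \S\ref{Section.ControlStatements} we may take it to be continuous.

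First I would dispatch the routine ratchet clauses. Each $r_\alpha$ fails in $\LL$ itself, since $\mathrm{fd}_\LL$ there is the essential size of trivial forcing while each $\delta_\alpha$ is infinite. Each $r_\alpha$ is a pure button: once $\mathrm{fd}_\LL\geq\delta_\alpha$ holds in a $\Gamma$ extension, Lemma \ref{lem:esssizeratchet} keeps it true in every further extension, giving $\necessary(r_\alpha\implies\necessary r_\alpha)$. The implication $\necessary(r_{\alpha+1}\implies r_\alpha)$ is immediate from $\delta_\alpha\leq\delta_{\alpha+1}$. Finally, the long-ratchet clause---that no $\Gamma$ extension satisfies every $r_\alpha$---is automatic, because any $\Gamma$ extension $\LL[G]$ arises by set forcing, so $\mathrm{fd}_\LL^{\LL[G]}$ is a set-sized ordinal and hence lies below some $\delta_\alpha$.

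The substantive clause is that each ratchet can be pushed without pushing the next, namely $\necessary[\neg r_{\alpha+1}\implies\possible(r_\alpha\wedge\neg r_{\alpha+1})]$. Working in a $\Gamma$ extension $\LL[G]$ with $\mathrm{fd}_\LL<\delta_{\alpha+1}$, if $\mathrm{fd}_\LL\geq\delta_\alpha$ already, reflexivity finishes. Otherwise $\mathrm{fd}_\LL<\delta_\alpha$, and since the essential size witnessing the current situation is below $\delta_\alpha$, the hypothesis supplies a forcing notion $\Q\in\Gamma^{\LL[G]}$ of essential size $\delta_\alpha$; I would force with it to reach $\LL[G][H]$ and verify $\delta_\alpha\leq\mathrm{fd}_\LL^{\LL[G][H]}<\delta_{\alpha+1}$. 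The upper bound is the computational half: the composite $\Gamma$ forcing taking $\LL$ to $\LL[G][H]$ has essential size bounded by an expression in $\delta_\alpha$ and the earlier essential size, which I would keep strictly below $\delta_{\alpha+1}$ using \GCH\ in $\LL$ together with a suitable regularity or spread condition imposed on the (possibly thinned) class $C$.

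The hard part will be the lower bound $\mathrm{fd}_\LL^{\LL[G][H]}\geq\delta_\alpha$. Forcing with a poset of essential size $\delta_\alpha$ only yields $\mathrm{fd}_\LL\leq$ (essential size of the composite), whereas $\mathrm{fd}_\LL$ is a \emph{minimum} over all forcings reaching the model, so one must rule out that $\LL[G][H]$ is obtainable from $\LL$ more cheaply. I would attack this through the intermediate-model analysis underlying Lemma \ref{lem:esssizeratchet}: if $\LL[G][H]=\LL[K]$ for some $\mathbb D$ of essential size below $\delta_\alpha$, then factoring $\LL[G]$ out of $\LL[K]$ produces a cheap quotient presentation of $\LL[G][H]$ over $\LL[G]$, and the crux is to show this is incompatible with the essential-size-$\delta_\alpha$ character of the forcing we used, leveraging the membership $\delta_\alpha\in C$ and the freedom to choose $\Q$. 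A related subtlety, which this same lower bound should resolve, is that the universally quantified push clause must be checked even at extensions $\LL[G]$ reached by inefficient $\Gamma$ forcing; establishing that $\Gamma$ forcing of essential size $\delta$ forces $\mathrm{fd}_\LL\geq\delta$ both closes the interval argument and rules out such anomalous presentations. Once interval membership is secured, $\langle r_\alpha\rangle$ is a long ratchet and Theorem \ref{Theorem.LongRatchetImpliesS4.3} delivers that the valid principles of $\Gamma$ forcing over $\LL$ lie within \theoryf{S4.3}.
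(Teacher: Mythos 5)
Your skeleton is exactly the paper's: the paper's proof also builds a long ratchet from the forcing distance, setting $w_\alpha$ to be ``$\mathrm{fd}_\LL$ is bigger than the $\alpha$th element of $C$'' (your $r_\alpha$ with $\geq$ in place of $>$ is an immaterial variant), invokes lemma \ref{lem:esssizeratchet} for purity and unidirectionality, notes that no set-forcing extension can satisfy all $w_\alpha$ since it arises by forcing of \emph{some} size, and concludes via theorem \ref{Theorem.LongRatchetImpliesS4.3}. Your routine clauses match the paper's, and your upper-bound bookkeeping (essential size of the composite iteration, controlled by \GCH\ in $\LL$ and regularity or spacing of the elements of $C$) is a reasonable filling-in of what the paper leaves implicit.

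But your proposal is not a proof: you explicitly leave the lower-bound crux open, and the plan you sketch for it cannot succeed at the stated level of generality. The key claim you would need --- that forcing with some member of $\Gamma^{\LL[G]}$ of essential size $\delta_\alpha$ yields an extension with $\mathrm{fd}_\LL\geq\delta_\alpha$ --- is simply false for arbitrary posets of essential size $\delta_\alpha$: a lottery sum of many small forcing notions (in the extreme case, an atomic complete Boolean algebra, or a lottery sum of collapses $\Coll(\omega,\eta)$ for $\eta$ below some cardinal) can have large essential size while \emph{every one} of its generic extensions is reachable by small forcing over $\LL[G]$, so $\mathrm{fd}_\LL$ never enters the target window. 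Consequently the intermediate-model factoring via \cite[lemma 15.43]{Jech:SetTheory3rdEdition} that you propose cannot derive a contradiction from the essential size of the poset alone: essential size is a property of the forcing equivalence class, whereas $\mathrm{fd}_\LL$ is a minimum over all presentations of one particular extension, and nothing ties them together without further information about the witness. The paper finesses this point in a single sentence (``Our assumptions on $\Gamma$ and $C$ ensure that\dots''), reading the hypothesis as supplying witnesses that genuinely realize the forcing distance $\delta$; in the actual applications (corollary \ref{Corollary.VariousClassesInS4.3}) the witnesses are concrete --- $\Add(\omega,\delta)$ and $\Add(\delta,1)$ for $\delta$ an uncountable successor cardinal of $\LL$ --- for which the lower bound $\mathrm{fd}_\LL\geq\delta$ can be checked directly (e.g., by counting nice names under \GCH: no forcing of essential size below $\delta$ can produce $\delta$ many mutually generic Cohen subsets). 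So to close your gap you must either verify the lower bound witness-by-witness as the applications do, or strengthen the hypothesis to demand witnesses forcing $\mathrm{fd}_\LL\geq\delta$ rather than merely having essential size $\delta$; your related observation about extensions reached by ``inefficient'' $\Gamma$ forcing is a genuine subtlety of the same kind, which the paper's terse argument likewise does not address.
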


\begin{proof}
We shall construct a long ratchet. For each nonzero ordinal $\alpha$, let $w_\alpha$ be the assertion ``$\mathrm{fd}_\LL$ is bigger than the $\alpha$th element of $C$''. By lemma \ref{lem:esssizeratchet}, each statement $w_\alpha$ is an unpushed pure button in $\LL$. Our assumptions on $\Gamma$ and $C$ ensure that if a set forcing extension $\LL[G]$ does not yet satisfy $w_\alpha$, then $w_\alpha\wedge\neg w_{\alpha+1}$ is $\Gamma$ forceable.  Finally, since any set forcing extension $\LL[G]$ was obtained by forcing of {\it some} size, no such extension can satisfy all $w_\alpha$. Thus, this is indeed a long ratchet, and so the valid principles of $\Gamma$ forcing over $\LL$ are contained within \theoryf{S4.3} by theorem \ref{Theorem.LongRatchetImpliesS4.3}.
\end{proof}

\begin{theorem}\label{Theorem.ContainingCollkappa}
If\/ $\Gamma$ is any reflexive transitive forcing class necessarily containing $\Coll_\kappa$ for some $\Gamma$-absolutely definable regular cardinal $\kappa$, then the valid principles for $\Gamma$ forcing over $\LL$ are contained within \theoryf{S4.3}.
\end{theorem}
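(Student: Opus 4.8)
The plan is to produce a long ratchet for $\Gamma$ forcing over the constructible universe $\LL$ and then appeal to theorem \ref{Theorem.LongRatchetImpliesS4.3}, which delivers exactly the desired upper bound of \theoryf{S4.3}. This mirrors the arguments for theorems \ref{Theorem.Coll_kappaValid=S4.3} and \ref{Theorem.ContainingColl}, with the collapse-to-$\omega$ steps replaced by collapse-to-$\kappa$ steps drawn from the copy of $\Coll_\kappa$ that $\Gamma$ necessarily contains. Since $\Gamma$ is already assumed reflexive and transitive, the hypotheses of theorem \ref{Theorem.LongRatchetImpliesS4.3} will be met once the ratchet is in hand.

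Concretely, let $\kappa$ be interpreted in $\LL$ by its $\Gamma$-absolute definition, say $\kappa^\LL = \aleph_\beta^\LL$, and for each nonzero ordinal $\alpha$ set $r_\alpha$ to be the assertion ``some $\LL$-cardinal above $\aleph_{\beta+\alpha}^\LL$ is collapsed.'' This is a uniform family with parameter $\beta$, each $r_\alpha$ is false in $\LL$, and since being collapsed is upward absolute, each $r_\alpha$ is a pure button; moreover $\necessary(r_{\alpha+1}\implies r_\alpha)$ holds trivially because lying above $\aleph_{\beta+\alpha+1}^\LL$ entails lying above $\aleph_{\beta+\alpha}^\LL$.

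The substantive clause to verify is that each $r_\alpha$ can be pushed without pushing $r_{\alpha+1}$, that is, $\necessary[\neg r_{\alpha+1}\implies\possible(r_\alpha\wedge\neg r_{\alpha+1})]$. First I would pass to an arbitrary $\Gamma$ forcing extension $\LL[G]$ satisfying $\neg r_{\alpha+1}$; if $r_\alpha$ already holds we are finished by reflexivity, so I may assume $\aleph_{\beta+\alpha+1}^\LL$ is still a cardinal there. I would then collapse $\aleph_{\beta+\alpha+1}^\LL$ to $\kappa$ using the poset $\Coll(\kappa,\aleph_{\beta+\alpha+1}^\LL)$ computed in $\LL[G]$. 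Because $\kappa$ is $\Gamma$-absolutely definable it remains the intended regular cardinal in $\LL[G]$, so this poset lies in $\Coll_\kappa^{\LL[G]}$, and since $\Gamma$ necessarily contains $\Coll_\kappa$ it is a legitimate $\Gamma$ forcing. This collapse renders $\aleph_{\beta+\alpha+1}^\LL$ (which sits above $\aleph_{\beta+\alpha}^\LL$) a non-cardinal, making $r_\alpha$ true, while it collapses nothing above $\aleph_{\beta+\alpha+1}^\LL$, so $\neg r_{\alpha+1}$ is preserved.

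Finally, since every $\Gamma$ forcing extension of $\LL$ is a set-forcing extension by a poset of some $\LL$-size $\theta$, it has the $\theta^+$-chain condition and therefore preserves all $\LL$-cardinals above $\theta^+$; hence $r_\alpha$ fails for every sufficiently large $\alpha$, and so no $\Gamma$ extension satisfies all the $r_\alpha$. This shows the family is a genuine long ratchet, and theorem \ref{Theorem.LongRatchetImpliesS4.3} then yields that the valid principles of $\Gamma$ forcing over $\LL$ are contained within \theoryf{S4.3}. The main obstacle will be the push-without-pushing-next clause: ensuring that inside a possibly wild $\Gamma$ extension $\LL[G]$ (not itself a $\Coll_\kappa$ extension) the collapse of $\aleph_{\beta+\alpha+1}^\LL$ to $\kappa$ genuinely preserves all larger $\LL$-cardinals, which I expect to handle by the standard fact that $\Coll(\kappa,\lambda)$ affects only cardinals in the interval $[\kappa,\lambda]$, together with checking via $\Gamma$-absoluteness that $\kappa$ still sits strictly below $\aleph_{\beta+\alpha+1}^\LL$ in $\LL[G]$.
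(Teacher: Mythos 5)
Your overall strategy (build a long ratchet over $\LL$ and invoke theorem \ref{Theorem.LongRatchetImpliesS4.3}) is the right kind of move, but your ratchet fails at precisely the clause you flagged, and the ``standard fact'' you invoke to repair it is false. The poset $\Coll(\kappa,\theta)$ does \emph{not} in general affect only cardinals in $[\kappa,\theta]$: since its conditions have size less than $\kappa$, it has cardinality $\theta^{\ltkappa}$ as computed in the model where it is used, and by the usual block-coding density argument it collapses $\theta^{\ltkappa}$ of that model to $\kappa$ (the classical instance is $\Add(\omega_1,1)$, which collapses the continuum to $\omega_1$). The preservation you need holds only when $\theta^{\ltkappa}=\theta$, and your hypothesis $\neg r_{\alpha+1}$ --- that no $\LL$-cardinal above $\aleph_{\beta+\alpha+1}^\LL$ has been collapsed --- gives no control whatsoever over $\theta^{\ltkappa}$ in $\LL[G]$, because $\Gamma$ may contain very wide forcing that collapses nothing. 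Concretely, take $\kappa=\omega_1$ and let $\Gamma$ consist of all forcing notions equivalent to a finite iteration of posets of the form $\Add(\omega,\mu)$ or $\Coll(\omega_1,\mu)$: this class is reflexive and transitive, contains $\Coll_{\omega_1}$, and $\omega_1$ is $\Gamma$-absolutely definable, since every member preserves $\omega_1$. Now let $\LL[G]$ be the extension by $\Add(\omega,\lambda)$ for some enormous $\lambda$. No $\LL$-cardinal is collapsed there, so $\neg r_{\alpha+1}$ holds; but $2^\omega=\lambda$, so in $\LL[G]$ and in every further Cohen extension, any nontrivial stage $\Coll(\omega_1,\mu)$ collapses $\mu^\omega\geq 2^\omega\geq\lambda$ to $\omega_1$, hence collapses $\aleph_{\beta+\alpha+2}^\LL$ along with $\aleph_{\beta+\alpha+1}^\LL$. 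Since Cohen stages collapse nothing, every extension of $\LL[G]$ in $\Gamma$ satisfying $r_\alpha$ also satisfies $r_{\alpha+1}$: the clause $\necessary[\neg r_{\alpha+1}\implies\possible(r_\alpha\wedge\neg r_{\alpha+1})]$ fails, and your family is simply not a ratchet for this $\Gamma$. (This also explains why the analogous arguments do succeed in theorems \ref{Theorem.ContainingColl} and \ref{Theorem.Coll_kappaValid=S4.3}: for $\kappa=\omega$ the conditions are finite, so $\Coll(\omega,\theta)$ has absolute size $|\theta|$ in every model; and for $\Gamma=\Coll_\kappa$ itself the forcing is ${\ltkappa}$-closed, so the computation $\theta^{\ltkappa}=\theta$ from $\LL$ persists.)

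The paper's proof sidesteps this problem entirely by checking the hypotheses of theorem \ref{Theorem.ArbSizeS4.3}, whose ratchet statements track the forcing distance $\mathrm{fd}_\LL$ (the essential size of the forcing needed to reach the model from $\LL$) against the class $C=\{(\kappa^{\plus^{(\alpha)}})^\LL\st\alpha\in\ORD\}$, with monotonicity supplied by lemma \ref{lem:esssizeratchet}. The crucial difference is that there the hypothesis ``$w_{\alpha+1}$ fails'' \emph{does} bound the essential size $\delta_0$ of the forcing that produced $\LL[G]$, and that bound, together with \GCH\ in $\LL$, is exactly what guarantees $(\delta^{\ltkappa})^{\LL[G]}=\delta$ for the next element $\delta\in C$ above $\delta_0$, so that $\Coll(\kappa,\delta)^{\LL[G]}$ has essential size exactly $\delta$ as the hypothesis of theorem \ref{Theorem.ArbSizeS4.3} requires. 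Your collapsed-cardinal statements carry no such width information, and that is where the proposal breaks; any repair along your lines effectively forces you to ratchet on essential size rather than on which $\LL$-cardinals have been collapsed.
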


\begin{proof} Suppose that $\Gamma$ and $\kappa$ are as stated. Check that the conditions of
theorem \ref{Theorem.ArbSizeS4.3} are satisfied: The fact that $\kappa$ is $\Gamma$-absolutely definable guarantees that $C := \{(\kappa^{\plus^{(\alpha)}})^\LL:\alpha\in\mathrm{Ord}\}$ is a definable proper class; if $\LL[G]$ is a $\Gamma$ extension of essential size $\delta_0 < \delta \in C$, then
$\Coll(\kappa,\delta)\in\Gamma$ has essential size $\delta$. Now the claim follows from theorem \ref{Theorem.ArbSizeS4.3}.
\end{proof}

\subsection{The modal logic of Cohen forcing}\label{Section.CohenForcing}

\begin{theorem}\label{Theorem.AddValid=S4.3}
If\/ \ZFC\ is consistent, then the \ZFC-provably valid principles of Cohen forcing $\Add$ are exactly \theoryf{S4.3}.
\end{theorem}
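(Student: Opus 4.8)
The plan is to prove both inclusions exactly in the style of the preceding collapse theorems: establish the bounds over the constructible universe $\LL$ and then transfer to the $\ZFC$-provable validities. The lower bound is immediate. By Lemma \ref{Lemma.AddIsLinear} the Cohen class $\Add$ is a linear forcing class, so by Theorem \ref{Theorem.EasyValidities}(3) every assertion of $\theoryf{S4.3}$ is valid for $\Add$ forcing over any model of set theory, and hence is $\ZFC$-provably valid. It remains to show that the valid principles are contained within $\theoryf{S4.3}$, and for this I would, following the pattern of Theorem \ref{Theorem.CollValid=S4.3}, exhibit a long ratchet for $\Add$ over $\LL$ and quote Theorem \ref{Theorem.LongRatchetImpliesS4.3}.

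Since Cohen forcing collapses no cardinals, the collapse-based ratchet is unavailable and I would instead ratchet on the continuum function. Let $\<\kappa_\alpha \st 0<\alpha<\ORD>$ enumerate the $\LL$-cardinals of uncountable cofinality lying above $\aleph_1$, and set $r_\alpha$ to be the assertion ``$2^{\aleph_0}\geq\kappa_\alpha$.'' Because $\Add$ is persistent, transitive, and closed under products (Lemma \ref{Lemma.AddIsLinear}), every $\Add$ extension of $\LL$ arises as a single Cohen extension by $\Add(\omega,\theta)$ for one ordinal $\theta$, whose continuum is $(\theta^{\aleph_0})^\LL$ and is therefore computable from $\GCH$ in $\LL$. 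As Cohen forcing is c.c.c., it preserves all $\LL$-cardinals and cofinalities and never decreases the continuum; hence each $r_\alpha$ is an unpushed pure button in $\LL$ (where $2^{\aleph_0}=\aleph_1$), each necessarily implies the earlier ones, and no $\Add$ extension satisfies all of them, since the continuum is always a set.

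The crux is the clause that each $r_\alpha$ can be pushed without pushing $r_{\alpha+1}$, and this is where I expect the main obstacle. König's theorem forbids $2^{\aleph_0}$ from having countable cofinality, so one simply cannot realize $2^{\aleph_0}=\aleph_\lambda^\LL$ when $\aleph_\lambda^\LL$ has cofinality $\omega$; restricting the index to cardinals of uncountable cofinality is precisely what sidesteps this. Starting from any $\Add$ extension in which $\neg r_{\alpha+1}$ holds, so that $2^{\aleph_0}<\kappa_{\alpha+1}$, one forces with $\Add(\omega,\kappa_\alpha)$; absorbing the two stages into a single Cohen forcing and evaluating in $\LL$ under $\GCH$, together with $\kappa_\alpha^{\aleph_0}=\kappa_\alpha$ for the regular uncountable $\kappa_\alpha$, shows the resulting continuum is exactly $\kappa_\alpha$, making $r_\alpha\wedge\neg r_{\alpha+1}$ true. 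One must also verify continuity at limit stages, and this too rests on König: at a limit $\lambda$ the supremum $\sup_{\alpha<\lambda}\kappa_\alpha$ may have cofinality $\omega$, but then $\forall\alpha{<}\lambda\,r_\alpha$ forces $2^{\aleph_0}$ strictly above that supremum, hence to its successor cardinal, which is exactly $\kappa_\lambda$; so $\forall\alpha{<}\lambda\,r_\alpha$ is equivalent to $r_\lambda$.

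With the long ratchet in hand, Theorem \ref{Theorem.LongRatchetImpliesS4.3} gives that the valid principles of $\Add$ forcing over $\LL$ are contained within $\theoryf{S4.3}$, so together with the lower bound they are exactly $\theoryf{S4.3}$ over $\LL$. The passage to the provable validities is then routine and identical to Theorem \ref{Theorem.CollValid=S4.3}: $\theoryf{S4.3}$ is provably valid, and any principle outside $\theoryf{S4.3}$ already fails over $\LL$, so if $\ZFC$ is consistent the $\ZFC$-provably valid principles of $\Add$ are exactly $\theoryf{S4.3}$. As an alternative to the direct ratchet, the upper bound can be obtained by checking the hypothesis of Theorem \ref{Theorem.ArbSizeS4.3} with $C$ the class of regular uncountable $\LL$-cardinals, since $\Add(\omega,\delta)$ has essential size $\delta$ for such $\delta$ (its complete Boolean algebra has size $\delta^{\aleph_0}=\delta$, and it adds $\delta$ reals so cannot be smaller); by Lemma \ref{lem:esssizeratchet} this yields the same ratchet phrased through $\mathrm{fd}_\LL$.
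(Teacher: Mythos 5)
Your proposal is correct and follows essentially the same route as the paper: the lower bound via Lemma \ref{Lemma.AddIsLinear} and Theorem \ref{Theorem.EasyValidities}, and the upper bound via a long ratchet on the continuum over $\LL$ fed into Theorem \ref{Theorem.LongRatchetImpliesS4.3}. The only difference is bookkeeping: the paper takes $r_\alpha$ to be the strict assertion ``$2^\omega>\aleph_\alpha^\LL$,'' so that pushing $r_\beta$ without $r_{\beta+1}$ means realizing $2^\omega=\aleph_{\beta+1}^\LL$, a successor (hence regular) cardinal, which silently avoids the K\"onig cofinality issue you handle explicitly by restricting your index set to $\LL$-cardinals of uncountable cofinality.
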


\begin{proof}
By lemma \ref{Lemma.AddIsLinear}, the class $\Add$ is a linear forcing class, and so the valid principles of Cohen forcing include
\theoryf{S4.3} by theorem \ref{Theorem.EasyValidities}. For the upper bound, we shall construct a long ratchet for Cohen forcing over
$\LL$. For each nonzero ordinal $\alpha$, let $r_\alpha$ be the statement that $2^\omega>\aleph_\alpha^\LL$. By adding additional subsets
to $\omega$, we can push up the value of the continuum to any such prescribed degree. And in any Cohen extension of $\LL$, all cardinals
have been preserved, so the continuum can be pushed up so as to realize any particular $\aleph_{\beta+1}^\LL$ above the current size of the
continuum, thereby forcing $r_\beta\wedge\neg r_{\beta+1}$. Thus, this is indeed a long ratchet for Cohen forcing $\Add$ over $\LL$, and so
by theorem \ref{Theorem.LongRatchetImpliesS4.3}, the valid principles of Cohen forcing over $\LL$ are contained within \theoryf{S4.3}, and
consequently are exactly equal to \theoryf{S4.3}. In particular, if\/ \ZFC\ is consistent, then the \ZFC-provably valid principles of Cohen
forcing are exactly \theoryf{S4.3}.
\end{proof}

As mentioned above, the class $\Add_\kappa$ is a linear forcing class, and therefore \theoryf{S4.3} is always valid for $\Add_\kappa$ forcing.

\begin{theorem}\label{Theorem.Add_kappaValid=S4.3}
If $\kappa$ is an absolutely definable regular cardinal, then the valid principles of $\kappa$-Cohen forcing $\Add_\kappa$ over $\LL$ are exactly \theoryf{S4.3}. Consequently, if\/ \ZFC\ is consistent, then the \ZFC-provably valid principles of $\kappa$-Cohen forcing are exactly \theoryf{S4.3}.
\end{theorem}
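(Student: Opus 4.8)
The plan is to sandwich the valid principles between \theoryf{S4.3} on both sides. The lower bound is essentially already in hand: since $\Add_\kappa$ is a linear forcing class (by the argument of lemma \ref{Lemma.AddIsLinear}), theorem \ref{Theorem.EasyValidities} shows that every \theoryf{S4.3} assertion is valid for $\Add_\kappa$ forcing over any model, in particular over $\LL$. For the upper bound, it suffices by theorem \ref{Theorem.LongRatchetImpliesS4.3} to produce a long ratchet for $\Add_\kappa$ over $\LL$; the passage to the \ZFC-provably valid principles when \ZFC\ is consistent then follows exactly as in theorem \ref{Theorem.AddValid=S4.3}, by relativizing to the $\LL$ of an arbitrary model.

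To build the ratchet, I would use the absolute definability of $\kappa$ to fix the ordinal $\beta$ with $\kappa^\LL=\aleph_\beta^\LL$, and for each nonzero ordinal $\alpha$ set $r_\alpha$ to be the uniform assertion ``$2^\kappa>\aleph_{\beta\plus\alpha}^\LL$.'' Since \GCH\ gives $2^\kappa=\aleph_{\beta\plus1}^\LL$ in $\LL$, every $r_\alpha$ is false there, and $r_{\alpha\plus1}$ necessarily implies $r_\alpha$ because the relevant $\LL$-cardinals retain their order in every extension. The structural engine is that $\Add_\kappa$ forcing preserves all cardinals over $\LL$ and over each of its $\Add_\kappa$ extensions: any $\Add(\kappa,\theta)$ is $\ltkappa$-closed, hence adds no new $\ltkappa$-sequences and preserves cardinals and cofinalities up to $\kappa$, so that the identity $2^\ltkappa=\kappa$ of $\LL$ persists into every such extension; and that identity in turn supplies the $\kappa^\plus$-chain condition, giving preservation of the remaining cardinals. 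Because $\Add_\kappa$ forcing only ever enlarges $2^\kappa$ while fixing the $\LL$-cardinals, each $r_\alpha$ is an unpushed pure button. Moreover no $\Add_\kappa$ extension $\LL[G]$ satisfies all $r_\alpha$, since $G$ is generic for a set-sized $\Add(\kappa,\theta)$ and hence $2^\kappa$ is a set lying below some $\aleph_{\beta\plus\alpha}^\LL$.

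The one remaining clause---that each $r_\alpha$ can be pushed without pushing $r_{\alpha\plus1}$---is where the main obstacle lies. Given an $\Add_\kappa$ extension $\LL[G]$ in which $\neg r_{\alpha\plus1}$ holds, so that $2^\kappa\leq\aleph_{\beta\plus\alpha\plus1}^\LL$ there, I would force with $\Add(\kappa,\aleph_{\beta\plus\alpha\plus1}^\LL)$ as computed in $\LL[G]$; this certainly drives $2^\kappa$ past $\aleph_{\beta\plus\alpha}^\LL$, pushing $r_\alpha$. The delicate point is to verify that it does \emph{not} overshoot $\aleph_{\beta\plus\alpha\plus1}^\LL$, i.e.\ that the new continuum equals $\lambda^\kappa$ for $\lambda=\aleph_{\beta\plus\alpha\plus1}^\LL$ and that $\lambda^\kappa=\lambda$ holds in $\LL[G]$. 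This arithmetic is legitimate because $\lambda$ is a successor $\LL$-cardinal---hence regular in $\LL$ and, by the cofinality preservation just noted, still regular and of size strictly above $\kappa$ in $\LL[G]$---while $2^\kappa\leq\lambda$ there. The regularity of $\lambda$ reduces $\lambda^\kappa$ to $\sup_{\mu<\lambda}\mu^\kappa$, and the residual \GCH\ structure that $\ltkappa$-closed, $\kappa^\plus$-c.c.\ forcing leaves in $\LL[G]$ away from $\kappa$ bounds each $2^\mu$, and hence each $\mu^\kappa$, below $\lambda$ for $\mu<\lambda$, so that $\lambda^\kappa=\lambda$. Granting this computation, all four long-ratchet clauses hold, and theorem \ref{Theorem.LongRatchetImpliesS4.3} yields that the valid principles of $\Add_\kappa$ forcing over $\LL$ are contained in \theoryf{S4.3}, completing the sandwich and, with the consistency observation above, the theorem.
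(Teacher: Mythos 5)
Your proposal is correct and follows essentially the same route as the paper: the lower bound via linearity of $\Add_\kappa$ and theorem \ref{Theorem.EasyValidities}, and the upper bound via the very same long ratchet $r_\alpha$ asserting ``$2^\kappa>\aleph_{\beta+\alpha}^\LL$'' together with theorem \ref{Theorem.LongRatchetImpliesS4.3}. The cardinal-arithmetic details you supply (the ${\smalllt}\kappa$-closure plus $\kappa^\plus$-c.c.\ preservation argument and the computation $\lambda^\kappa=\lambda$ in $\LL[G]$ ensuring no overshoot) are a correct fleshing-out of what the paper asserts in one line, namely that one can push $2^\kappa$ to attain any prescribed higher value while preserving all cardinals.
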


\begin{proof}
This argument proceeds as in theorem \ref{Theorem.Coll_kappaValid=S4.3}, but using the analogue of the long ratchet of theorem \ref{Theorem.AddValid=S4.3}. As mentioned, \theoryf{S4.3} is always valid for $\Add_\kappa$ forcing. Conversely, consider $\kappa$ as defined in $\LL$, which must be $\aleph_\beta^\LL$ for some ordinal $\beta$. For each nonzero ordinal $\alpha$, let $r_\alpha$ be the statement that $2^\kappa>\aleph_{\beta+\alpha}^\LL$. This is a long ratchet over $\LL$ with respect to $\Add_\kappa$, since we can always push up $2^\kappa$ to attain any particular higher cardinal value, and since $\Add_\kappa$ forcing does not collapse cardinals over $\Add_\kappa$ extensions $\LL[G]$, once $r_\alpha$ is true in an $\Add_\kappa$ extension, it remains true in all further extensions. Thus, by theorem \ref{Theorem.LongRatchetImpliesS4.3}, the valid principles of $\Add_\kappa$ forcing over $\LL$ are contained within \theoryf{S4.3}, and therefore are in fact equal to \theoryf{S4.3}. It follows that if \ZFC\ is consistent, then the \ZFC-provably valid principles of $\Add_\kappa$ forcing are precisely those in \theoryf{S4.3}.
\end{proof}

\subsection{Upper bounds for other classes of forcing}\label{Section.OtherClasses}

We shall now apply theorem \ref{Theorem.ArbSizeS4.3} to other forcing classes:

\begin{corollary}\label{Corollary.VariousClassesInS4.3}
The valid principles of forcing over $\LL$ for each of the following forcing classes is contained within \theoryf{S4.3}. For none of the
forcing classes do the validities over $\LL$ include \theoryf{S4.2}.
\begin{enumerate}
 \item c.c.c.~forcing.
 \item Proper forcing.
 \item Semi-proper forcing.
 \item Stationary-preserving forcing.
 \item $\omega_1$-preserving forcing.
 \item Cardinal-preserving forcing.
 \item Countably distributive forcing.
\end{enumerate}
\end{corollary}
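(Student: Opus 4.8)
The plan is to prove two things for each class $\Gamma$ on the list: that the valid principles over $\LL$ are contained within \theoryf{S4.3}, obtained by applying theorem \ref{Theorem.ArbSizeS4.3}; and that they omit \theoryf{S4.2}, obtained by exhibiting a substitution instance of axiom \axiomf{.2} that fails under the $\Gamma$ forcing interpretation.

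For the upper bound I would verify the hypotheses of theorem \ref{Theorem.ArbSizeS4.3}. Each of the seven classes is reflexive and transitive, since trivial forcing belongs to each and each is closed under two-step iteration. It then remains to supply a definable proper class $C$ of $\LL$-cardinals such that, in every $\Gamma$ extension $\LL[G]$ of essential size $\delta_0$, there is a forcing notion in $\Gamma^{\LL[G]}$ of essential size $\delta$ for each larger $\delta\in C$. I would take $C$ to be the class of regular uncountable cardinals of $\LL$. For classes (1)--(6) the witness is Cohen forcing $\Add(\omega,\delta)$: it is c.c.c., hence also proper, semi-proper, stationary-preserving, $\omega_1$-preserving and cardinal-preserving in every model, so it lies in $\Gamma^{\LL[G]}$; and since the forcing producing $\LL[G]$ has size at most $\delta_0<\delta$, the regular $\LL$-cardinal $\delta$ survives into $\LL[G]$, where \GCH\ holds above $\delta_0$, so $\Add(\omega,\delta)$ there has essential size $\delta^{\aleph_0}=\delta$. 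For class (7) Cohen forcing adds reals and is therefore unavailable, so instead I would use $\Coll(\omega_1,\delta)$, which is countably closed, hence countably distributive in every model, and likewise has essential size $\delta$. Theorem \ref{Theorem.ArbSizeS4.3} then gives the containment in \theoryf{S4.3}.

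For the failure of \theoryf{S4.2}, it suffices to refute \axiomf{.2}, i.e.\ to find $\varphi$ with both $\possible\necessary\varphi$ and $\possible\necessary\neg\varphi$ true over $\LL$, so that $\necessary\possible\varphi$ fails; this is precisely a pair of incompatible weak buttons. For classes (1)--(6) I would let $\varphi$ assert that the $\LL$-least Souslin tree $T$, which exists since $\LL\satisfies\diamondsuit$, has a cofinal branch. Forcing with $T$ is c.c.c.\ and makes $\varphi$ necessary (a cofinal branch is upward absolute under $\omega_1$-preserving forcing), while the usual specialization forcing for $T$ is c.c.c.\ and makes $\neg\varphi$ necessary; both forcings lie in each of (1)--(6) because c.c.c.\ forcing is contained in all of them, and $\varphi$ and $\neg\varphi$ cannot both be necessary. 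For class (7), specialization adds reals, so instead I would fix the $\LL$-least partition $\omega_1^\LL=\bigsqcup_{n}S_n$ into stationary sets and let $\varphi$ assert that $S_0$ is nonstationary. By lemma \ref{Lemma.ShootingClubs}, shooting a club through $S_0$ is countably distributive and makes $\neg\varphi$ necessary, while shooting a club through $\bigsqcup_{m\neq 0}S_m$ is countably distributive and makes $\varphi$ necessary; again these outcomes are incompatible. In each case the resulting substitution instance of \axiomf{.2} fails over $\LL$, so the validities do not contain \theoryf{S4.2}.

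The main obstacle I anticipate is the countably distributive class (7), which must be treated separately at both steps: Cohen forcing and tree specialization each add reals and so leave the class, forcing one to supply $\Coll(\omega_1,\delta)$ to witness arbitrarily large essential sizes for the long ratchet, and the stationary-set club-shooting of lemma \ref{Lemma.ShootingClubs} to witness the incompatible weak buttons. A secondary point is confirming in the upper-bound argument that the witnessing forcing has essential size exactly $\delta$ in the intermediate model $\LL[G]$; this rests on the observation that forcing of size below a regular $\LL$-cardinal $\delta$ preserves $\delta$ and the local \GCH\ needed to compute the completion. It is worth noting the conceptual tension that drives the whole corollary: the \axiomf{.2} failure requires genuinely incompatible (non-directed) forcing moves, while the long ratchet requires a robust linearly ordered progression of essential sizes, and these two features coexist in each class.
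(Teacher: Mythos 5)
Your proposal is correct and follows essentially the same route as the paper: both establish the upper bound by verifying the hypotheses of theorem \ref{Theorem.ArbSizeS4.3} with a definable proper class of $\LL$-cardinals, witnessed by $\Add(\omega,\delta)$ for classes (1)--(6) and a countably closed poset for class (7) (you use $\Coll(\omega_1,\delta)$ where the paper uses $\Add(\delta,1)$), and both refute \axiomf{.2} via the branch-versus-specialize dichotomy for the $\LL$-least Souslin tree in classes (1)--(6) and club-shooting through complementary stationary sets for class (7). Your variants (regular uncountable cardinals rather than uncountable successor cardinals for $C$, the statement ``$T$ has a cofinal branch'' rather than ``$T$ is special Aronszajn,'' and the stationary partition rather than a single stationary co-stationary set) are inessential.
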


\begin{proof}
Each of these classes is easily seen to be transitive and reflexive. Let $C$ be the class of uncountable successor cardinals of $\LL$. For any $\delta\in C$, each class contains forcing of essential size $\delta$. For example, $\Add(\omega,\delta)$ is in classes (1) through (6), and $\Add(\delta,1)$ is in class (7). The same is true in any forcing extension $\LL[G]$ by set forcing of essential size below $\delta$. Thus, each class satisfies the requirements of theorem \ref{Theorem.ArbSizeS4.3}, and so the valid principles of forcing in each
case are included within \theoryf{S4.3}.

We complete the proof by showing for each of the classes that $\axiomf{.2}$ is not valid over $\LL$. Let $\varphi$ be the assertion ``the $\LL$-least Souslin tree is a special Aronszajn tree.'' This statement is c.c.c.~forceable over $\LL$, simply by specializing the tree, and once the statement is true, it remains true in all $\omega_1$-preserving extensions. Consider now the c.c.c.~extension $\LL[b]$ obtained by forcing to add a branch $b$ through $T$. In this extension and all subsequent $\omega_1$-preserving extensions, $\varphi$ is false. Since each of the forcing classes in (1) through (6) contains c.c.c.~forcing and is contained within $\omega_1$-preserving forcing, this statement $\varphi$ is a violation of axiom $\axiomf{.2}$ with respect to each of the forcing classes. For class (7), we may consider the forcing to kill the stationary of the $\LL$-least stationary co-stationary subset of $\omega_1$, or to kill its complement, and the impossibility of doing both while adding no reals provides a violation of $\axiomf{.2}$.
\end{proof}

The reader can probably extend this list with additional natural forcing classes satisfying the hypothesis of theorem
\ref{Theorem.ArbSizeS4.3} (the classes of countably closed forcing and more generally, $\kappa$-closed forcing for any fixed absolutely definable cardinal $\kappa$, will be covered in \S\,\ref{ssec:countablyclosed}). Let us state for the record that we do not currently know the exact forcing validities for any of the classes listed in corollary \ref{Corollary.VariousClassesInS4.3}. Nevertheless, our analysis of the case of $\omega_1$-preserving forcing and cardinal-preserving will be improved in theorem \ref{Theorem.Omega1PrservingS4tBA}.

\subsection{Countably closed and
$\kappa$-closed forcing}\label{ssec:countablyclosed}

We turn now to the class of countably closed forcing, a highly natural forcing class with robust closure properties, along with its generalization to $\ltkappa$-closed forcing. As usual, a forcing notion $\Q$ is {\df countably closed} if every decreasing sequence $q_0\geq q_1\geq\cdots$ has a lower bound in $\Q$. This property is not preserved by equivalence of forcing (for example, an atomless complete Boolean algebra is {\it never} countably complete, since one may take the supremum of a countable antichain, stripping off one element in each step), so we understand the class of countably closed forcing to refer to the class of all forcing notions that are forcing equivalent to a countably closed forcing notion.

\begin{theorem}\label{Theorem.CountablyClosed=S4.2}
If\/ \ZFC\ is consistent, then the \ZFC-provably valid principles of the class of countably closed forcing are exactly those in \theoryf{S4.2}.
\end{theorem}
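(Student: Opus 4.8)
The plan is to pin \theoryf{S4.2} from both sides. For the lower bound, the class of countably closed forcing is reflexive (trivial forcing is countably closed), transitive (a two-step iteration of countably closed forcing is again countably closed, and this passes to forcing equivalence), and closed under products (the product of two countably closed posets is countably closed, and countable closure is persistent because countably closed forcing adds no new $\omega$-sequences). Being transitive and persistent, it is directed, so by theorem \ref{Theorem.EasyValidities}(2) every assertion of \theoryf{S4.2} is valid for countably closed forcing over every model of set theory, provably in \ZFC.

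For the upper bound I would work over the constructible universe $\LL$ and produce a uniform family of $\ORD$ many independent buttons for countably closed forcing, so as to invoke theorem \ref{Theorem.IndependentOrdButtonsImpliesS4.2} and conclude that the valid principles over $\LL$ are contained within \theoryf{S4.2}. Combined with the lower bound, the validities over $\LL$ are then exactly \theoryf{S4.2}; and since any modal assertion outside \theoryf{S4.2} thereby acquires a set-theoretic substitution instance failing over $\LL$, the \ZFC-provably valid principles are exactly \theoryf{S4.2} whenever \ZFC\ is consistent.

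The buttons are the higher analogues of the stationarity buttons $b_n^*$ of \S\ref{Section.ControlStatements}. For each ordinal $\alpha$, let $\kappa_\alpha$ be an appropriately spaced $\LL$-regular cardinal (say $\kappa_\alpha=\aleph_{\omega\cdot\alpha+1}^\LL$), and let $S_\alpha\of\Cof_{\kappa_\alpha}\intersect(\kappa_\alpha^+)^\LL$ be the $\LL$-least stationary, co-stationary set on points of cofinality $\kappa_\alpha$; let $b_\alpha$ assert ``$S_\alpha$ is no longer stationary.'' Each $b_\alpha$ is an unpushed pure button over $\LL$, since non-stationarity is upward absolute and $S_\alpha$ is genuinely stationary in $\LL$. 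To push a prescribed finite set of the $b_\alpha$, I would take the (countably closed) iteration of the natural ${<}\kappa_\alpha$-closed forcings shooting a club through the complement of each selected $S_\alpha$; by the higher analogue of lemma \ref{Lemma.ShootingClubs} announced at the end of \S\ref{Section.ControlStatements}, each such forcing is countably closed and preserves the stationarity of every other $S_\beta$, so no unselected button is pushed. Finally, any set forcing $\P$ has bounded size, so for all sufficiently large $\alpha$ it is $\kappa_\alpha$-c.c.\ and therefore preserves the stationarity of $S_\alpha$; hence $\P$ pushes at most boundedly many of the $b_\alpha$, as required.

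The main obstacle is the cross-level independence of the family: one must verify that the club-shooting used to push one button leaves the witnessing stationary sets of all other levels intact. Below the active level this is immediate from ${<}\kappa_\alpha$-closure, which adds no new subsets of the smaller ordinals; above it, it follows from a chain-condition bound, once the $\kappa_\alpha$ are spaced so that the size $\kappa_\alpha^+$ of the club-shooting stays below the next chosen cardinal. The genuinely delicate ingredient is the higher Baumgartner--Harrington--Kleinberg preservation lemma (the announced analogue of lemma \ref{Lemma.ShootingClubs}) guaranteeing that shooting a club through the complement of $S_\alpha$ is countably closed and preserves stationary subsets; granting that, the remaining bookkeeping is routine and the modal conclusion is delivered wholesale by theorem \ref{Theorem.IndependentOrdButtonsImpliesS4.2}.
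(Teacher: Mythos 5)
Your lower bound is exactly the paper's: reflexivity, transitivity and persistence of countably closed forcing give directedness, and theorem \ref{Theorem.EasyValidities}(2) yields \theoryf{S4.2} provably in \ZFC. For the upper bound, however, you take a genuinely different route from the paper, and it is there that your argument has a real gap. The paper needs only \emph{finite} families of mutually independent buttons and switches (theorem \ref{Theorem.ButtonsSwitchesImpliesS4.2}), and so it can place all of its buttons at a \emph{single} cardinal level: the higher analogues of the $b_n^*$ partition $\Cof_{\omega_1}\intersect\omega_2$ into finitely many stationary pieces, and mutual preservation among the buttons comes directly from the higher analogue of lemma \ref{Lemma.ShootingClubs} (shooting a club through the union of the unselected pieces preserves the stationarity of each of them), with no cross-level interaction to control; the switches are then operated by forcing closed far above the button cardinal, which adds no new subsets of $\omega_2$ and hence cannot touch the buttons. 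You instead spread $\ORD$ many buttons across distinct levels $\kappa_\alpha=\aleph_{\omega\cdot\alpha+1}^\LL$ so as to invoke theorem \ref{Theorem.IndependentOrdButtonsImpliesS4.2}, and this forces you to verify \emph{cross-level} independence, which the paper's design deliberately avoids.

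The gap is in your ``above the active level'' step. You argue that the club-shooting poset at level $\alpha$ has size $\kappa_\alpha^+$, safely below $\kappa_{\alpha+1}$, so a chain-condition bound protects the higher $S_\gamma$. But that size computation uses $2^{\kappa_\alpha}=\kappa_\alpha^+$, which holds in $\LL$ and fails in typical countably closed extensions, and independence must hold \emph{necessarily}, i.e., in every countably closed extension $\LL[G]$, which is where the pushing actually takes place. For example, after forcing with $\Add(\omega_1,\kappa_{\alpha+7})$ over $\LL$ (countably closed), the conditions of the level-$\alpha$ club-shooting poset --- closed bounded subsets of $(\kappa_\alpha^+)^\LL$ --- number $2^{\kappa_\alpha}\geq\kappa_{\alpha+7}$, so the poset is far larger than $\kappa_{\alpha+1}$ and your chain-condition argument says nothing about preserving $S_{\alpha+1},S_{\alpha+2},\ldots$; its ${\smalllt}\kappa_\alpha$-closure protects only the levels \emph{below}. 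Nothing you cite rules out that this forcing collapses cardinals between $\kappa_\alpha^+$ and $2^{\kappa_\alpha}$ and thereby kills the stationarity of higher $S_\gamma$ --- indeed the paper itself flags exactly this danger (``the club-shooting forcing could conceivably collapse cardinals above $\omega_1$'' in some extensions) in its discussion of cardinal-preserving forcing. So the ``routine bookkeeping'' you defer is precisely where your construction can fail, and the announced higher analogue of lemma \ref{Lemma.ShootingClubs} (same-level preservation) does not supply it. To repair the argument, either retreat to the paper's configuration --- finitely many buttons partitioning a single $\Cof_{\kappa}\intersect\kappa^+$, plus switches controlled by highly closed forcing, feeding theorem \ref{Theorem.ButtonsSwitchesImpliesS4.2} --- or supply a preservation argument for the club-shooting that does not depend on the continuum function in the extension.
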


\begin{proof}
Since any countably closed forcing notion remains countably closed in any countably closed extension, it follows that the class of countably closed forcing is persistent. Thus, by theorem \ref{Theorem.EasyValidities}, the validities always include \theoryf{S4.2}. For the upper bound, consider countably closed forcing over $\LL$. Consider the higher buttons and switches introduced in \S\,\ref{Section.ForcingWithGamma}, either the higher analogues we mentioned of our own buttons $b_n^*$ or the higher buttons of Rittberg, or of Friedman, Fuchino and Sakai, which can in each case be controlled independently via countably closed forcing. Thus, by theorem \ref{Theorem.ButtonsSwitchesImpliesS4.2}, the valid principles of countably closed forcing over $\LL$ are contained within \theoryf{S4.2}. In summary, if \ZFC\ is consistent, then the \ZFC-provably valid principles of countably closed forcing are precisely the assertions of \theoryf{S4.2}.
\end{proof}

A forcing notion $\Q$ is $\ltkappa$-closed if every descending sequence in $\Q$ of length less than $\kappa$ has a lower bound in $\Q$. Thus, the countably closed posets are exactly the ${\smalllt}\aleph_1$-closed. A cardinal $\kappa$ is {\it absolutely defined} by a formula $\varphi$ in the context of a reflexive transitive forcing class $\Gamma$, if $\kappa$ is the only object satisfying $\varphi(x)$ in any $\Gamma$-forcing extension.

\begin{theorem}\label{Theorem.KappaClosed=S4.2}
More generally, if\/ \ZFC\ is consistent, then for any absolutely definable regular cardinal $\kappa$, the \ZFC-provably valid principles of $\kappa$-closed forcing are exactly those in \theoryf{S4.2}.
\end{theorem}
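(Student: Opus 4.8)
The plan is to follow the template of theorem \ref{Theorem.CountablyClosed=S4.2}, establishing \theoryf{S4.2} as both a lower and an upper bound for $\ltkappa$-closed forcing over $\LL$, and then packaging the result with the usual consistency observation. (I read ``$\kappa$-closed'' here as $\ltkappa$-closed, so that countably closed forcing is the case $\kappa=\aleph_1$.) For the lower bound, I would first verify that the class of $\ltkappa$-closed forcing is reflexive, transitive and persistent. Reflexivity is clear, since trivial forcing is $\ltkappa$-closed; transitivity is the standard fact that an iteration $\Q*\dot\R$ of $\ltkappa$-closed forcings is again $\ltkappa$-closed, using the regularity of $\kappa$; and persistence holds because a $\ltkappa$-closed extension adds no new sequences of length $<\kappa$, so any descending sequence of length $<\kappa$ occurring in such an extension already lies in the ground model and has a lower bound there. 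The absolute definability of $\kappa$ guarantees that the very same $\kappa$ is recognized in every $\ltkappa$-closed extension, so that $\ltkappa$-closed forcing really is a forcing class with these properties in every model. Since a transitive persistent class is directed, theorem \ref{Theorem.EasyValidities} gives that \theoryf{S4.2} is valid for $\ltkappa$-closed forcing over every model of set theory.

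For the upper bound, by theorem \ref{Theorem.ButtonsSwitchesImpliesS4.2} it suffices to produce arbitrarily large finite families of mutually independent buttons and switches for $\ltkappa$-closed forcing over $\LL$. I would use the higher analogue of the buttons $b_n^*$ indicated at the end of \S\,\ref{Section.ControlStatements}. Working in $\LL$, fix the $\LL$-least partition $\Cof_\kappa\cap\kappa^+=\bigsqcup_{n\in\omega}S_n$ of the points of cofinality $\kappa$ below $\kappa^+$ into $\omega$ many disjoint stationary sets (such a splitting exists in $\LL$ by the generalization of Solovay's theorem to the regular cardinal $\kappa^+$), and let $b_n^*$ assert ``$S_n$ is no longer stationary.'' Each $b_n^*$ is a pure button, since non-stationarity is upward absolute, and all may be pushed simultaneously by the $\ltkappa$-closed collapse $\Coll(\kappa,\kappa^+)$. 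To push $b_n^*$ alone, I would shoot a club $C\of\kappa^+$ with $C\cap\Cof_\kappa\of\bigsqcup_{m\neq n}S_m$, using conditions that are closed bounded subsets of $\kappa^+$ meeting $\Cof_\kappa$ only in the complement of $S_n$; a short cofinality computation shows this forcing is $\ltkappa$-closed, since the supremum of a descending sequence of length $<\kappa$ has cofinality $<\kappa$ and hence avoids $\Cof_\kappa$, making the limit condition legitimate. For the switches I would use generalized-continuum-function statements at cardinals well above $\kappa^+$ (for instance $2^{\kappa^{+k}}=\kappa^{+(k+1)}$ for $k\geq 2$), which can be toggled in both directions by forcing that is $\ltkappa$-closed and adds no new subsets of $\kappa^+$---blowing the relevant power up by adding subsets, or restoring \GCH\ there by collapsing---and which therefore leaves every $S_m$ stationary. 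This makes the combined family mutually independent, and theorem \ref{Theorem.ButtonsSwitchesImpliesS4.2} then confines the valid principles of $\ltkappa$-closed forcing over $\LL$ within \theoryf{S4.2}.

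The main obstacle is the verification that the club-shooting forcing just described preserves the stationarity of each remaining $S_m$, that is, the higher analogue of lemma \ref{Lemma.ShootingClubs} at $\kappa^+$; this is precisely what yields independence of the buttons. I would adapt the Baumgartner--Harrington--Kleinberg argument by replacing the countable elementary substructure with an elementary substructure $X\elesub H_\theta$ of size $\kappa$, built as a continuous increasing union of substructures of size $<\kappa$ so that $\delta:=\sup(X\cap\kappa^+)$ has cofinality $\kappa$, and arranged so that $\delta\in T$ for a given stationary target $T\of S_m$. One then constructs an $X$-generic descending sequence of conditions of length $\kappa$ whose limit is obtained by adjoining $\delta$; this limit is a legitimate condition exactly because $\delta\in T\of S_m$ lies in $\Cof_\kappa$, and being $X$-generic it forces any name in $X$ for a club to meet $\check T$. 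This argument relies on $\kappa^{<\kappa}=\kappa$, which holds in $\LL$ by \GCH.

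Granting this, the two bounds coincide, so the valid principles of $\ltkappa$-closed forcing over $\LL$ are exactly \theoryf{S4.2}. Finally, since \theoryf{S4.2} is valid in every model while any assertion outside \theoryf{S4.2} acquires a failing substitution instance over $\LL$ (a model of \ZFC\ whenever \ZFC\ is consistent), it follows that the \ZFC-provably valid principles of $\ltkappa$-closed forcing are exactly those of \theoryf{S4.2}.
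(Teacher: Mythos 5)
Your proposal is correct and follows essentially the same route as the paper, which likewise obtains the lower bound from persistence (hence directedness) of the $\ltkappa$-closed class and the upper bound from the higher analogues of the buttons $b_n^*$ on $\Cof_\kappa\intersect\kappa^+$ together with switches, via theorem \ref{Theorem.ButtonsSwitchesImpliesS4.2}. The paper's proof is only a two-line sketch deferring to the remark at the end of \S\,\ref{Section.ControlStatements} that the club-shooting analogue of lemma \ref{Lemma.ShootingClubs} goes through at $\kappa^+$; you have simply supplied the details (the $\ltkappa$-closure computation, the size-$\kappa$ elementary substructure argument using $\kappa^{\ltkappa}=\kappa$ in $\LL$, and the switch manipulations above $\kappa^+$) that the paper leaves to the reader.
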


\begin{proof}
Since the class is persistent, we again obtain \theoryf{S4.2} as a lower bound. And for the upper bound, one may use the higher analogues of the buttons and switches used in theorem \ref{Theorem.CountablyClosed=S4.2}, by translating them above $\kappa$.
\end{proof}

\subsection{$\omega_1$-preserving forcing}

Surely the class of cardinal-preserving forcing and the larger class of $\omega_1$-preserving forcing have been focal points of forcing theory. We have already observed in corollary \ref{Corollary.VariousClassesInS4.3} that the valid principles of $\omega_1$-preserving forcing over $\LL$ is contained within \theoryf{S4.3}. Here, we prove a stronger conclusion, although the exact modal theory of validities for this class remains an open question.

\begin{theorem}\label{Theorem.Omega1PrservingS4tBA}
The valid principles of $\omega_1$-preserving forcing over $\LL$ are included within $\theoryf{S4.tBA}\subsetneqq\theoryf{S4.2}$.
\end{theorem}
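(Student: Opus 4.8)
The plan is to invoke Theorem~\ref{Theorem.WeakButtonsImpliestBA}, which reduces the problem to exhibiting, over $\LL$, arbitrarily large finite families of strongly independent weak buttons together with mutually independent switches for $\omega_1$-preserving forcing. Recall that a family of weak buttons is strongly independent if no extension pushes all of them, but any proper subset can be pushed while leaving the rest controllable. The key structural feature of $\omega_1$-preserving forcing that distinguishes it from the directed classes (and thereby keeps $\axiomf{.2}$ out of its logic) is exactly the phenomenon already flagged in the proof of Corollary~\ref{Corollary.VariousClassesInS4.3}: one can either shoot a club through a stationary/co-stationary set or through its complement, but not both, since doing both would destroy $\omega_1$. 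This ``either/or but never both'' behavior is precisely the combinatorics encoded by toplessness.

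First I would fix in $\LL$ the $\LL$-least partition $\omega_1^\LL=\bigsqcup_{n\in\omega}S_n$ of $\omega_1$ into $\omega$ many disjoint stationary sets, exactly as in the discussion preceding Lemma~\ref{Lemma.ShootingClubs}. For each $n$, let the weak button $b_n$ assert ``$S_n$ is nonstationary.'' Each $b_n$ is a pure button by upward absoluteness of nonstationarity, and each is a \emph{weak} button because it is possibly necessary: we may shoot a club through the complement $\bigsqcup_{m\neq n}S_m$ to push $b_n$, and Lemma~\ref{Lemma.ShootingClubs} guarantees this forcing preserves the stationarity of every remaining $S_m$ (and is $\omega_1$-preserving, indeed countably distributive). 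The crucial point making these \emph{strongly} independent weak buttons is that no $\omega_1$-preserving extension can push all of $b_0,\ldots,b_{k-1}$ simultaneously past a certain threshold: if every $S_n$ for $n$ in a cofinite set became nonstationary, then $\omega_1$ would be a countable union of nonstationary sets together with finitely many stationary ones, and one checks this collapses $\omega_1$—contradicting $\omega_1$-preservation. I would verify carefully that one can push any proper subset of the $b_n$ (leaving at least one $S_n$ untouched) while retaining independent control of the remainder, again via club-shooting through the union of the surviving stationary sets and invoking the preservation clause of Lemma~\ref{Lemma.ShootingClubs}.

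For the switches, I would reuse the continuum-function switches $s_k$ asserting ``$2^{\aleph_k}=\aleph_{k+1}$'' for $k>1$, controllable independently by $\kappa$-closed (in particular $\omega_1$-preserving) forcing that adds no subsets of $\omega_1$ and so leaves all the $S_n$ and their stationarity untouched; these are mutually independent with the weak buttons $b_n$. Assembling arbitrarily large finite subfamilies then furnishes exactly the hypothesis of Theorem~\ref{Theorem.WeakButtonsImpliestBA}, yielding that the valid principles over $\LL$ are contained within $\theoryf{S4.tBA}$. The strict inclusion $\theoryf{S4.tBA}\subsetneqq\theoryf{S4.2}$ is already recorded in the Observation following Theorem~\ref{Theorem.EquivalenceRelationsCompleteForS5}.

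The main obstacle I anticipate is the precise verification of \emph{strong} independence—specifically the claim that no $\omega_1$-preserving extension pushes all the buttons at once, and that the club-shooting forcing genuinely allows independent control of an arbitrary proper subfamily inside \emph{every} $\omega_1$-preserving extension, not merely over $\LL$ itself. The delicate case is a bad intermediate extension $\LL[G]$ in which the continuum has been blown up or the structure of the $S_n$ has been partially disturbed: I must confirm that the relevant $S_n$ remain stationary (so that the club-shooting forcing of Lemma~\ref{Lemma.ShootingClubs} still applies) and that shooting a club through the complement of one surviving $S_n$ does not inadvertently push a button we wish to keep unpushed. This requires checking that the stationary sets are preserved through the bookkeeping, exactly the content that Lemma~\ref{Lemma.ShootingClubs} is designed to supply; the collapse argument ruling out pushing all buttons must likewise be seen to go through uniformly in all such extensions.
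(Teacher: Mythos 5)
Your overall route is exactly the paper's: apply theorem \ref{Theorem.WeakButtonsImpliestBA} to the stationary-set buttons $b_n^*$ arising from the $\LL$-least partition $\omega_1^\LL=\bigsqcup_{n\in\omega}S_n$, paired with an independent family of switches, and quote the earlier observation for $\theoryf{S4.tBA}\subsetneqq\theoryf{S4.2}$. (Your switches differ harmlessly from the paper's: you reuse the continuum-function switches ``$2^{\aleph_k}=\aleph_{k+1}$'' controlled by highly closed forcing, whereas the paper reads off the binary digits of $k$, where $(2^\omega)^{\LL[G]}=\aleph_{\omega\cdot\xi+k}^\LL$, and sets them by adding Cohen reals; either choice can be made to work, since both kinds of control preserve all stationary subsets of $\omega_1$ and so do not disturb the buttons.)

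However, your verification of strong independence---the very step you flag as the main obstacle---is wrong, and as stated your finite families are \emph{not} strongly independent. Taking $b_0,\ldots,b_{k-1}$ straight from the infinite partition, an $\omega_1$-preserving extension \emph{can} push all of them at once: shoot a club through $\bigsqcup_{m\geq k}S_m$, which by lemma \ref{Lemma.ShootingClubs} is countably distributive (hence $\omega_1$-preserving) and renders every $S_n$ with $n<k$ nonstationary. Your claimed contradiction (``one checks this collapses $\omega_1$'') is fallacious: $\omega_1$ being covered by finitely many stationary sets together with a nonstationary set is entirely consistent with $\omega_1$-preservation, and is precisely the configuration after pushing some buttons. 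The correct fact, implicitly used by the paper, is the $\sigma$-additivity of the nonstationary ideal when $\omega_1$ is preserved: not \emph{all} of the $\omega$ many $S_n$ can become nonstationary, since their union is $\omega_1$ itself. To extract genuine finite strongly independent families of each size $n$, you must repackage the partition into exactly $n$ stationary pieces---say $S_0,\ldots,S_{n-2}$ together with the merged tail $T=\bigsqcup_{m\geq n-1}S_m$---and take as weak buttons the assertions that each piece is nonstationary. Pushing all $n$ of these would exhibit $\omega_1$ as a finite union of nonstationary sets, hence nonstationary in itself, which is impossible in an $\omega_1$-preserving extension; meanwhile any proper subfamily is controllable in any $\omega_1$-preserving extension by shooting a club through the union of the pieces to be kept, with lemma \ref{Lemma.ShootingClubs} guaranteeing that those pieces remain stationary. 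With this repair (and the routine remark that any incidental damage the club-shooting does to your switch values can be undone afterwards by further stationary-preserving forcing), your argument goes through and coincides with the paper's proof.
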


\begin{proof}
By theorem \ref{Theorem.WeakButtonsImpliestBA}, it suffices to find arbitrarily large finite strongly independent families of weak buttons and switches. For this, we may use the same buttons $b_n^*$ mentioned in the context of lemma \ref{Lemma.ShootingClubs}, which are described also in \cite[thm\,29]{HamkinsLoewe2008:TheModalLogicOfForcing}. These were a independent family of buttons for the class of all forcing, but for the class of $\omega_1$-preserving forcing, they become a strongly independent family of weak buttons, since in any $\omega_1$-preserving extension $\LL[G]$, at least one of the sets $S_n$ must still remain stationary, and so not all buttons are pushed. But subject to this requirement, lemma \ref{Lemma.ShootingClubs} and the subsequent argument shows that the buttons can be controlled
independently by $\omega_1$-preserving forcing. For a family of switches that is mutually independent of these weak buttons, consider $(2^\omega)^{\LL[G]}$, which must be $\aleph_{\omega\cdot\xi+k}^\LL$ for some ordinal $\xi$ and some natural number $k$, and let $s_j$ assert that the $j$th binary digit of $k$ is $1$. Since we can force any desired value for $k$ by adding Cohen reals, it follows that any finite pattern of the $s_j$ switches is realizable, by forcing that preserves all stationary sets, and hence does not inadvertently push any of the unpushed weak buttons $b_n$. Since we have a strongly independent family of weak buttons, with a mutually independent family of switches, it follows by theorem \ref{Theorem.WeakButtonsImpliestBA} that the valid principles of $\omega_1$-preserving forcing over $\LL$ is contained within \theoryf{S4.tBA}.
\end{proof}

We are unsure whether this argument can be extended to the class of cardinal-preserving forcing, since it seems that in some extensions, the club-shooting forcing could conceivably collapse cardinals above $\omega_1$. Nevertheless, this issue may be solvable by the alternative method of shooting clubs, via finite conditions, which does preserve all cardinals.

\subsection{$\CH$-preserving forcing}\label{ssec:preserving}

In any model of $\ZFC+\varphi$, we say that forcing notion $\Q$ is {\df $\varphi$-preserving}, if every forcing extension by $\Q$ satisfies $\varphi$. Note that we consider only $\varphi$-preserving forcing in models that actually satisfy $\varphi$, so by ``provably valid principles of $\varphi$-preserving forcing'', we mean the $\ZFC+\varphi$-provably valid principles of $\varphi$-preserving forcing.

\begin{theorem}\label{thm:chpreserving}
If\/ \ZFC\ is consistent, then the provably valid principles of \CH-preserving forcing, of \GCH-preserving forcing and of $\neg\CH$-preserving forcing are all exactly \theoryf{S4.2}.
\end{theorem}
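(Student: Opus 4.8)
The plan is to show, for each of the three theories $\varphi\in\{\CH,\GCH,\neg\CH\}$, that the provably valid principles of $\varphi$-preserving forcing are exactly \theoryf{S4.2}. The lower bound is uniform: I would first argue that in each case the class of $\varphi$-preserving forcing is reflexive (trivial forcing preserves $\varphi$), transitive (a $\varphi$-preserving extension of a $\varphi$-preserving extension still satisfies $\varphi$), and directed. Directedness is the key structural point---given two $\varphi$-preserving posets $\P,\Q$ over a model of $\varphi$, I would find a common $\varphi$-preserving extension absorbing both, for instance by a product or by a sufficiently absorbing collapse-style forcing that restores $\varphi$. Once directedness is in hand, theorem \ref{Theorem.EasyValidities} gives that \theoryf{S4.2} is valid, so the validities contain \theoryf{S4.2} in every model of $\ZFC+\varphi$.

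For the upper bound, by corollary \ref{Corollary.ExactlyS4.2} it suffices to exhibit, over some model of $\ZFC+\varphi$, arbitrarily large finite families of mutually independent buttons and switches for $\varphi$-preserving forcing. The natural base model is $\LL$ (which satisfies $\GCH$ and hence $\CH$), and for $\neg\CH$ a canonical model obtained by adding many Cohen reals to $\LL$. The plan is to reuse the independent buttons $b_n^*$ built from the $\LL$-least partition $\omega_1^\LL=\bigsqcup_n S_n$ into stationary sets, together with switches, but to check that one can operate each control while staying inside the $\varphi$-preserving class. The buttons are pushed by club-shooting (lemma \ref{Lemma.ShootingClubs}), which is countably distributive and so adds no reals and preserves cardinals; since adding no reals does not disturb the value of $2^{\aleph_0}$, such forcing is automatically \CH-preserving and $\neg\CH$-preserving, and one checks it is $\GCH$-preserving as well. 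The switches must be chosen compatibly with $\varphi$: for \CH- and $\GCH$-preserving forcing I would use switches that manipulate the continuum function at cardinals where $\varphi$ permits variation (for $\GCH$ one is forced higher up, e.g. toggling $2^{\aleph_n}$ in a way respecting $\GCH$ below), while for $\neg\CH$-preserving forcing I have more freedom since $2^{\aleph_0}$ may move as long as it stays above $\aleph_1$.

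The main obstacle will be the $\GCH$-preserving case, because $\GCH$ is a rigid global constraint: most natural switch-forcing (adding Cohen subsets, toggling $2^{\aleph_k}=\aleph_{k+1}$) either already holds under $\GCH$ or violates $\GCH$ when pushed, so it is delicate to produce genuinely independent switches that remain $\GCH$-preserving. The plan is to locate switch statements high in the cardinal hierarchy whose two states are both consistent with $\GCH$---for instance assertions about the club-shooting or stationary-reflection behavior at various $\omega_n$ that are decided by $\GCH$-preserving forcing but not by $\GCH$ alone---or to encode switches via combinatorial data (such as which of a fixed $\GCH$-compatible family of guessing sequences survives) that can be toggled by cardinal-preserving, $\GCH$-preserving forcing. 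The remaining obstacle is verifying genuine \emph{mutual independence}: that pushing one button or flipping one switch, via an appropriate $\varphi$-preserving poset, leaves all other controls in whatever state they were, which follows for the $b_n^*$ from lemma \ref{Lemma.ShootingClubs} (stationarity of the other $S_m$ is preserved) and for the switches from the fact that the chosen forcings add no new objects relevant to the other controls. Assembling these pieces, corollary \ref{Corollary.ExactlyS4.2} yields that the validities are contained in \theoryf{S4.2}, and combined with the lower bound we conclude they are exactly \theoryf{S4.2} over the chosen model, hence the provably valid principles are exactly \theoryf{S4.2} whenever \ZFC\ is consistent.
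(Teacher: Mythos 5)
Your lower bound is viable but note a trap: the plain product of two $\varphi$-preserving posets need \emph{not} be $\varphi$-preserving (mutually generic \CH-preserving extensions can jointly violate \CH), so the phrase ``for instance by a product'' would fail on its own; your fallback---appending forcing that restores $\varphi$---is the correct repair, and indeed one can get genuine directedness by passing to $(\P\times\Q)*\dot\bbS$ where $\dot\bbS$ is $\Coll(\omega_1,2^\omega)$ for \CH, $\Add(\omega,\omega_2)$ for $\neg\CH$, and a suitable collapse for \GCH\ (after set forcing, \GCH\ fails only boundedly, so it can be restored). The paper takes a slightly different route to the same effect: it never verifies directedness, but refutes a failure of $\axiomf{.2}$ directly, taking mutually generic $G,H$ with $V[G]\satisfies\necessary\varphi$ and $V[H]\satisfies\necessary\neg\varphi$, moving to a further extension $V[G*H][I]\satisfies\CH$, and restricting the intermediate poset below a condition forcing \CH\ to see this final model is a \CH-preserving extension of $V[G]$ and of $V[H]$, a contradiction. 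Either way, be aware of a subtlety you pass over: validity of the axiom $\axiomf{.2}$ in $V$ alone does not yield all of \theoryf{S4.2}, which is closed under necessitation; the paper handles this with observations \ref{obs:nec.2} and \ref{obs:necessitation}, whereas on your route you must establish directedness provably in $\ZFC+\varphi$ so that theorem \ref{Theorem.EasyValidities} applies in every $\varphi$-preserving extension.

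The genuine gap is in your \GCH-preserving upper bound. You rightly flag \GCH\ as the hard case, but your proposed switches do not work as described: ``which members of a fixed family of guessing sequences survive'' and ``which stationary sets survive'' are one-way statements---destruction is upward absolute---so they are buttons, not switches, and cannot be toggled back; and manipulating stationary-reflection behavior at accessible cardinals over $\LL$ is not available without large cardinals. The paper's actual device is different in kind: in a \GCH-preserving extension $\LL[G]$, let $\beta$ be least such that $\LL[G]$ contains no $\LL$-generic Cohen subset of any cardinal above $\aleph_\beta^\LL$, write $\beta=\omega\cdot\alpha+k$ with $k<\omega$, and let $s_m$ assert that the $m$th binary digit of $k$ is $1$. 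Any switch pattern is then realized by adding a single Cohen subset of one suitably large cardinal---forcing which preserves \GCH\ and does not disturb the stationarity buttons $b_n^*$. This is exactly the ratchet-to-switch trick of theorem \ref{Theorem.LongRatchetImpliesS4.3}: a switch need not come from a reversible forcing, only from the residue pattern of a monotone invariant that can be advanced to any prescribed value. Without some such device your \GCH\ case is incomplete; your \CH\ and $\neg\CH$ cases, by contrast, match the paper (continuum-function switches $2^{\aleph_k}=\aleph_{k+1}$ controlled by countably closed forcing for \CH, and higher analogues controlled by highly closed forcing for $\neg\CH$, with the buttons handled by lemma \ref{Lemma.ShootingClubs} and theorem \ref{Theorem.ButtonsSwitchesImpliesS4.2} in each case).
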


\begin{proof} Let us treat the case of \CH\ first. The class of \CH-preserving forcing, over any model satisfying \CH, is easily seen to be reflexive and transitive. So \theoryf{S4} is valid. Let us argue that .2 is also valid for $\CH$-preserving forcing.

Suppose that $V\satisfies\CH$ and $V\satisfies\possible\necessary\varphi$ as well.  Assume that $V$ does not satisfy .2, that is, that $V\satisfies\neg\necessary\possible\varphi$, or, equivalently, $V\satisfies\possible\necessary\neg\varphi$.  Then there are \CH-preserving extensions $V[G]$ and $V[H]$ where $V[G]\satisfies\necessary\varphi$ and $V[H]\satisfies\necessary\neg\varphi$.  We may assume without loss of generality that $G$ and $H$ are mutually $V$-generic. We don't know that $V[G*H]$ satisfies \CH, so we can't just directly combine them. But let $V[G*H][I]$ be a further forcing extension of $V[G*H]$ that does satisfy \CH.
We claim that $V[G*H][I]$ is the result of $\CH$-preserving forcing over $V[G]$.  Consider some partial order $\P$ such that there is some $\P$-generic $J$ such that $V[G*H][I]=V[G][J]$.  Since this model satisfies \CH, there is some $p\in J$ such that $p$ forces \CH.  Let $\P_p$ be the poset $\P$ below $p$, i.e., $\P_p$ is $\CH$-preserving.  Without loss of generality, we can assume that $J$ is $\P_p$-generic, thus yielding the claim.
Thus, $V[G*H][I]$ is obtained from $V[G]$ by \CH-preserving forcing, satisfying both $\varphi$ and $\neg\varphi$, a contradiction.
Thus, $V$ satisfies \axiomf{.2} as desired, and by observation
\ref{obs:nec.2}, $V$ satisfied $\necessary\axiomf{.2}$. Since the class of $\CH$-preserving forcings is closed under iterations, the Kripke model corresponding to every forcing translation is transitive. Hence, by observation \ref{obs:necessitation}, this means that \theoryf{S4.2} is valid in $V$ for \CH-preserving forcing, thus (since $V$ was arbitrary) giving the lower bound.

For the upper bound, consider \CH-preserving forcing over $\LL$. We may use the independent family of buttons and switches for $\LL$ presented in \S\,\ref{Section.ControlStatements}, which work just as well for \CH-preserving forcing, and so by theorem \ref{Theorem.ButtonsSwitchesImpliesS4.2} the valid principles of \CH-preserving forcing over $\LL$ are contained within \theoryf{S4.2}. So they are exactly \theoryf{S4.2}.

Essentially the same argument works in the case of $\neg\CH$-preserving forcing, by means of the higher analogues of the buttons and switches, which can be controlled with highly closed and therefore $\neg\CH$-preserving forcing. For \GCH-preserving forcing, however, we shall need to use different switches. To find them, consider any forcing extension $\LL[G]$, and let $\beta$ be the least ordinal such that there are no $\LL$-generic Cohen subsets in $\LL[G]$ of any cardinal above $\aleph_{\beta}^\LL$. The ordinal $\beta$ can be written uniquely in the form $\omega\cdot\alpha+k$ for some ordinal $\alpha$ and $k<\omega$. Let $s_m$ be the statement that the $m$th binary
digit of $k$ is $1$. These form an independent family of switches (and independent from the buttons $b_n$ above), because by adding a Cohen set up high, we can realize any desired natural number $k$, and hence realize any desired finite pattern in the switches. (Alternatively, we could build a long ratchet with a similar method, mutually independent from the buttons $b_n$, and this also suffices.)
\end{proof}

\subsection{The modal logic of c.c.c.~forcing}\label{Section.CCCforcing}

The class of c.c.c.~forcing is the pre-eminent forcing class, noticed already in the earliest days of forcing as a central notion. Since the class of c.c.c.~partial orders is not closed under the equivalence of forcing (for example, every poset is forcing equivalent to very large lottery sums of the poset with itself), let us understand the class of c.c.c.~forcing to consist of all forcing notions that are forcing equivalent to a c.c.c.~partial order. In corollary \ref{Corollary.VariousClassesInS4.3}, we observed that the valid principles of c.c.c.~forcing over $\LL$ are contained within \theoryf{S4.3} and do not contain \theoryf{S4.2}. But we do not know the exact modal theory of c.c.c.~forcing validities over $\LL$ or the class of \ZFC-provably valid principles of c.c.c.~forcing.

\begin{theorem}\label{Theorem.MAimpliesS4.2forCCC}
If\/ $\MA_{\omega_1}$ holds, then \theoryf{S4.2} is valid for c.c.c.~forcing.
\end{theorem}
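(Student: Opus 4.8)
The plan is to follow exactly the template used for \CH-preserving forcing in theorem \ref{thm:chpreserving}: establish that \theoryf{S4} is valid unconditionally, verify the single axiom \axiomf{.2} over the ground model using $\MA_{\omega_1}$, and then bootstrap to the full theory \theoryf{S4.2} by means of observations \ref{obs:nec.2} and \ref{obs:necessitation}. This detour is essential precisely because $\MA_{\omega_1}$ need not survive into c.c.c.~extensions.

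First I would record that the class of c.c.c.~forcing is reflexive (trivial forcing is c.c.c.) and transitive (a two-step iteration $\Q*\dot\R$ of c.c.c.~forcing is c.c.c.), so that \theoryf{S4} is valid for c.c.c.~forcing over every model by theorem \ref{Theorem.EasyValidities}(1); in particular every c.c.c.~extension of $V$ validates all of \theoryf{S4}. Next, working in a model $V$ of $\MA_{\omega_1}$, I would invoke the standard consequence that every c.c.c.~poset has the Knaster property (precaliber $\aleph_1$). Since every c.c.c.~poset is then Knaster, the product $\P\cross\Q$ of any two c.c.c.~posets is again c.c.c., which is exactly what is needed to see that c.c.c.~forcing is \emph{directed over $V$}: as $\P\cross\Q\iso\P*\check\Q\iso\Q*\check\P$ with $\P\cross\Q$ c.c.c., each of $\P$ and $\Q$ is a factor of the single c.c.c.~poset $\P\cross\Q$ by further c.c.c.~forcing, the quotient $\Q$ being c.c.c.\ in $V^\P$ and symmetrically.

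Directedness over $V$ suffices to verify every instance of \axiomf{.2} at $V$: if $\possible\necessary\varphi$ holds, witnessed by a c.c.c.~$\P$ after which $\varphi$ is c.c.c.-necessary, then for any c.c.c.~$\Q$ the amalgamating poset $\P\cross\Q$ is a c.c.c.~extension of $V^\Q$ in which $\varphi$ holds, so $\necessary\possible\varphi$ holds. This is just the single-model content of the implication ``directed $\Rightarrow$ $\axiomf{.2}$'' underlying theorem \ref{Theorem.EasyValidities}(2). I emphasize that I cannot simply cite theorem \ref{Theorem.EasyValidities}(2) to obtain the whole of \theoryf{S4.2}, because that theorem requires directedness in all models, whereas $\MA_{\omega_1}$ may be destroyed by c.c.c.~forcing. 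Instead, once \axiomf{.2} is valid at $V$, observation \ref{obs:nec.2} (which shows $\theoryf{S4}\vdash\axiomf{.2}\to\necessary\axiomf{.2}$) together with the validity of \theoryf{S4} yields that $\necessary\axiomf{.2}$ holds at $V$ under every forcing translation.

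Finally, since c.c.c.~forcing is transitive, the Kripke model associated to each forcing translation has a transitive frame, so I would apply observation \ref{obs:necessitation} with $\Lambda$ the theorems of \theoryf{S4}, with $\mathrm{A}=\{\axiomf{.2}\}$, and with $v=V$: condition (1) holds because \theoryf{S4} is valid at every c.c.c.~extension, and condition (2) is the statement $\necessary\axiomf{.2}\in\llbracket V\rrbracket$ just established. The conclusion is that the closure $\Lambda^*=\theoryf{S4.2}$ is contained in $\llbracket V\rrbracket$, that is, \theoryf{S4.2} is valid for c.c.c.~forcing, as desired. The main obstacle is conceptual rather than computational: it is precisely the non-persistence of $\MA_{\omega_1}$ (and hence of directedness) under c.c.c.~forcing that forces the detour through the two modal observations instead of a direct appeal to theorem \ref{Theorem.EasyValidities}(2); the only genuinely set-theoretic input is the productive-c.c.c.~(Knaster) fact, which is a routine consequence of $\MA_{\omega_1}$.
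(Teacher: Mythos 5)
Your proof is correct and follows essentially the same route as the paper: validity of \theoryf{S4} from reflexivity and transitivity, verification of \axiomf{.2} at the ground model from the $\MA_{\omega_1}$ consequence that c.c.c.~forcing is directed over $V$, and the bootstrap to full \theoryf{S4.2} via observations \ref{obs:nec.2} and \ref{obs:necessitation}, exactly as in theorem \ref{thm:chpreserving}. The only cosmetic difference is that you derive directedness from the Knaster property, whereas the paper directly cites the equivalent fact (under $\MA_{\omega_1}$, every c.c.c.~poset in $V$ remains c.c.c.~in every c.c.c.~extension) from Jech.
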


\begin{proof}
Since the class of c.c.c.~forcing is clearly reflexive and transitive, it follows by theorem \ref{Theorem.EasyValidities} at least that \theoryf{S4} is valid. Martin's axiom $\MA_{\omega_1}$ implies that c.c.c.~forcing in $V$ is persistent, that is, every c.c.c.~poset in $V$ remains c.c.c.~in every c.c.c.~extension of $V$ \cite[theorem 16.21]{Jech:SetTheory3rdEdition}. This exactly shows that axiom $\axiomf{.2}$ is valid for c.c.c.~forcing in $V$. As in the proof of theorem \ref{thm:chpreserving}, observations \ref{obs:nec.2} and \ref{obs:necessitation} give us that \theoryf{S4.2} is valid for c.c.c.~forcing in every model of $\MA_{\omega_1}$.
\end{proof}

It should be pointed out that the (modal logic) proof of theorem \ref{Theorem.MAimpliesS4.2forCCC} hides the set-theoretic fact that the $\MA_{\omega_1}$ hypothesis is rather fragile, and easily destroyed by c.c.c.~forcing (e.g., adding a Cohen real creates Souslin trees and therefore destroys \MA). Our theorem tells us that \theoryf{S4.2} will continue to be valid in all further c.c.c.~forcing extensions of a model of $\MA_{\omega_1}$, even though such extensions may no longer have $\MA_{\omega_1}$. (In particular, it means that while Souslin trees are created, they cannot be definable in a way that allows us to create the counterexamples to $\axiomf{.2}$, as in corollary \ref{Corollary.VariousClassesInS4.3}).

The argument generalizes beyond c.c.c.~forcing to any transitive reflexive forcing class $\Gamma$. Namely, if such a $\Gamma$ is persistent in some model $V$, then it will continue to satisfy \theoryf{S4.2} in all $\Gamma$ extensions $V[G]$, even if the interpretation of $\Gamma$ in those later models is no longer persistent.

Finally, we remark that in order to prove that the
$\ZFC+\MA_{\omega_1}$-provably valid principles of c.c.c.~forcing are exactly \theoryf{S4.2}, it would be sufficient
to identify arbitrarily large finite families of independent c.c.c.-buttons and switches over the Solovay-Tennenbaum model $\LL[G]$ of $\MA+\neg\CH$.

\bibliographystyle{abbrv}
\bibliography{s43}
\end{document}